\journal{Journal of \LaTeX\ Templates}
\colorlet{MyBlue}{DodgerBlue!75!Black}
\colorlet{MyGreen}{DarkGreen!95!Black}
\numberwithin{equation}{section}  
\crefname{example}{Ex.}{Exs.}
\newcommand{\dd}{\:d}
\newcommand{\eps}{\varepsilon}
\newcommand{\dif}{\dd}
\DeclareMathOperator*{\argmin}{argmin}
\DeclareMathOperator*{\argmax}{argmax}
\DeclareMathOperator{\bd}{bd}
\DeclareMathOperator{\cl}{cl}
\DeclareMathOperator{\diag}{diag}
\DeclareMathOperator*{\Diag}{Diag}
\DeclareMathOperator{\dist}{dist}
\DeclareMathOperator{\dom}{dom}
\DeclareMathOperator{\gr}{graph}
\DeclareMathOperator{\Int}{int}
\DeclareMathOperator{\rint}{relint}
\DeclareMathOperator{\tr}{tr}
\DeclareMathOperator{\Id}{Id}
\newcommand{\ce}{\mathtt{e}}
\DeclareMathOperator{\prox}{Prox}
\newcommand{\bA}{{\mathbf A}}
\newcommand{\BB}{{\mathbf B}}
\newcommand{\DD}{{\mathbf D}}
\newcommand{\bJ}{{\mathbf J}}
\newcommand{\XX}{\mathbf{X}}
\newcommand{\YY}{\mathbf{Y}}
\newcommand{\UU}{\mathbf{U}}
\newcommand{\bI}{\mathbf{I}}
\newcommand{\bM}{\mathbf{M}}
\newcommand{\bK}{\mathbf{K}}
\renewcommand{\iff}{\Leftrightarrow}
\renewcommand{\emptyset}{\varnothing}
\newcommand{\scrH}{\mathcal{H}}
\newcommand{\scrL}{\mathcal{L}}
\newcommand{\scrP}{\mathcal{P}}
\newcommand{\setC}{\mathsf{C}}
\newcommand{\setE}{\mathsf{E}}
\newcommand{\setS}{\mathsf{S}}
\newcommand{\setV}{\mathsf{V}}
\newcommand{\setW}{\mathsf{W}}
\newcommand{\setX}{\mathsf{X}}
\newcommand{\setZ}{\mathsf{Z}}
\newcommand{\Rn}{\R^n}
\newcommand{\R}{\mathbb{R}}
\newcommand{\N}{\mathbb{N}}
\DeclareMathOperator{\NC}{\mathsf{NC}}
\DeclareMathOperator{\TC}{\mathsf{TC}}
\newcommand{\BL}{\mathsf{BL}}
\theoremstyle{plain}
\newtheorem{theorem}{Theorem}
\newtheorem{lemma}[theorem]{Lemma}
\newtheorem{proposition}[theorem]{Proposition}
\theoremstyle{definition}
\newtheorem{definition}[theorem]{Definition}
\newtheorem{assumption}{Assumption}
\renewcommand\qed{\hfill\small$\blacksquare$}
\theoremstyle{remark}
\newtheorem{remark}{Remark}
\newtheorem*{remark*}{Remark}
\newtheorem*{notation*}{Notational remark}
\newtheorem{example}{Example}
\numberwithin{theorem}{section}
\numberwithin{remark}{section}
\numberwithin{example}{section}
\DeclarePairedDelimiter{\inner}{\langle}{\rangle}
\DeclarePairedDelimiter{\norm}{\lVert}{\rVert}
\newacro{HBA}[HBA]{Hessian-barrier algorithm}
\newacro{AHBA}[AHBA]{Adaptive-Hessian-barrier algorithm}
\newacro{VI}{variational inequality}
\newacro{iid}[i.i.d.]{independent and identically distributed}
\newacro{FOM}{First-order method}
\newacro{LMO}{Linear Minimization Oracle}
\newacro{DGF}{Distance Generating Function}
\newacro{LLOO}{Local Linear Optimization Oracle}
\newacro{FW}{Frank-Wolfe}
\newacro{CG}{Conditional Gradient}
\newacro{SC}{self-concordant}
\newacro{GSC}{generalized self-concordant}
\newacro{DGF}{distance generating function}
\begin{document}

\begin{frontmatter}

\title{First-Order Methods for Convex Optimization
}

\author[WIAS]{Pavel Dvurechensky}
\address[WIAS]{Weierstrass Institute for Applied Analysis and Stochastics, Mohrenstr. 39, 10117 Berlin, Germany}
\ead{pavel.dvurechensky@wias-berlin.de}

\author[Technion]{Shimrit Shtern}
\address[Technion]{Faculty of Industrial Engineering and Management, Technion - Israel Institute of Technology, Haifa, Israel}
\ead{shimrits@technion.ac.il}

\author[UM]{Mathias Staudigl}
\address[UM]{Maastricht University, Department of Data Science and Knowledge Engineering (DKE) and Mathematics Centre Maastricht (MCM), Paul-Henri Spaaklaan 1, 6229 EN Maastricht, The Netherlands \corref{mycorrespondingauthor}}
\cortext[mycorrespondingauthor]{Mathias Staudigl}
\ead{m.staudigl@maastrichtuniversity.nl}

\begin{abstract}
First-order methods for solving convex optimization problems have been at the forefront of mathematical optimization in the last 20 years. The rapid development of this important class of algorithms is motivated by the success stories reported in various applications, including most importantly machine learning, signal processing, imaging and control theory. First-order methods have the potential to provide low accuracy solutions at low computational complexity which makes them an attractive set of tools in large-scale optimization problems. In this survey we cover a number of key developments in gradient-based optimization methods. This includes non-Euclidean extensions of the classical proximal gradient method, and its accelerated versions. Additionally we survey recent developments within the class of projection-free methods, and proximal versions of primal-dual schemes. We give complete proofs for various key results, and highlight the unifying aspects of several optimization algorithms. 
\end{abstract}

\begin{keyword}
Convex Optimization, Composite Optimization, First-Order Methods, Numerical Algorithms, Convergence Rate, Proximal Mapping, Proximity Operator, Bregman Divergence. 
\MSC[2010] 90C25 \sep 
90C30 \sep 
90C06 \sep 
68Q25 \sep 
65Y20 \sep 
68W40 
\end{keyword}

\end{frontmatter}

\linenumbers

\section{Introduction}
\label{sec:introduction}
The traditional standard in convex optimization was to translate a problem into a conic program and solve it using a primal-dual interior point method (IPM). The monograph \cite{NesNem94} was instrumental in setting this standard. The primal-dual formulation is a mathematically elegant and powerful approach as these conic problems can then be solved to high accuracy when the dimension of the problem is of moderate size. This philosophy culminated into the development of a robust technology for solving convex optimization problems which is nowadays the computational backbone of many specialized solution packages like MOSEK \cite{Mosek00}, or SeDuMi \cite{Stu99}. However, in general, the iteration costs of interior point methods grow non-linearly with the problem's dimension. As a result, as the dimension $n$ of optimization problems grows, off-the shelve interior point methods eventually become impractical. As an illustration, the computational complexity of a single step of many standardized IPMs scales like $n^{3}$, corresponding roughly to the complexity of inverting an $n\times n$ matrix. This means that for already quite small problems of size like $n=10^{2}$, we would need roughly $10^{6}$ arithmetic operations just to compute a single iterate. From a practical viewpoint, such a scaling is not acceptable. An alternative solution approach, particularly attractive for such "large-scale" problems, are \emph{first-order methods} (FOMs). These are iterative schemes with computationally cheap iterations usually known to yield low-precision solutions within reasonable computation time. The success-story of FOMs went hand-in-hand with the fast progresses made in data science, analytics and machine learning. In such data-driven optimization problems, the trade-off between fast iterations and low accuracy is particularly pronounced, as these problems usually feature high-dimensional decision variables. In these application domains precision is usually considered to be a subordinate goal because of the inherent randomness of the problem data, which makes it unreasonable to minimize with accuracy below the statistical error. 

The development of first-order methods for convex optimization problems is still a very vibrant field, with a lot of stimulus from the already mentioned applications in machine learning, statistics, optimal control, signal processing, imaging, and many more, see e.g. review papers on optimization for machine learning \cite{jain2017non-convex,curtis2017optimization,wright2018optimization}. Naturally, any attempt to try to survey this lively scientific field is already doomed from the beginning to be a failure, if one is not willing to make restrictions on the topics covered. Hence, in this survey we tried to give a largely self-contained and concise summary of some important families of FOMs, which we believe have had an ever-lasting impact on the modern perspective of continuous optimization. Before we give an outline what is covered in this survey, it is therefore maybe fair to mention explicitly, what is NOT covered in the pages to come. One major restriction we imposed on ourselves is the concentration on \emph{deterministic} optimization algorithms. This is indeed a significant cut in terms of topics, since the field of stochastic optimization and randomized algorithms has particularly been at the forefront of recent progresses made. Nonetheless, we made this cut by purpose, since most of the developments within stochastic optimization algorithms are based on deterministic counterparts, and actually in many cases one can think of deterministic algorithms as the mean-field equivalent of a stochastic optimization technique. As well-known example, we can mention the celebrated stochastic approximation theory initiated by Robbins and Monro \cite{RobMon51}, with its deep connection to deterministic gradient descent. See \cite{Kus84,BenMetPri90,LjuPflWal12}, for classical references from the point of view of systems theory and optimization, and \cite{Ben98} for its deep connection with deterministic dynamical systems. This link has gained significant relevance in various stochastic optimization models recently \cite{MerSta18,MerStaJOTA18,DucRua18,DavDrusKakLee20}. An excellent reference on stochastic optimization is \cite{ShaDenRus09} and \cite{PicPfl14}. Furthermore, we excluded the very important class of alternating minimization methods, such as block-coordinate descent, and variations of the same idea. These methods are fundamental in distributed optimization, and lay the foundations for the now heavily investigated randomized algorithms, exploiting the block-structure of the model to achieve acceleration and reduce the overall computational complexity.  Section 14 in the beautiful book by Amir Beck \cite{Beck17} gives a thorough account of these methods and we urge the interested reader to start reading there. \\

So, what is it that we actually do in this article? Four seemingly different optimization algorithms are surveyed, all of which belong now to the standard toolkit of mathematical programmers. After introducing the (standard) notation that will be used in this survey, we give a precise formulation of the model problem for which modern convex optimization algorithms are developed. In particular, we focus on the general composite convex optimization model, including a smooth and one non-smooth term. This model is rich enough to capture a significant class of convex optimization problems. Non-smoothness is an important feature of the model, as it allows us to incorporate constraints via penalty and barrier functions. 
An efficient way to deal with non-smoothness is provided by the use of \emph{proximal operators}, a key methodological contribution born within convex analysis (see \cite{RW98} for an historical overview). Section \ref{sec:MD} introduces the general non-Euclidean proximal setup, which describes the mathematical framework within which the celebrated \emph{Mirror Descent} and \emph{Bregman proximal gradient methods} are analyzed nowadays. This set of tools has been extremely popular in online learning and convex optimization \cite{Bub15,JudNemML1,JudNemML2}. The main idea behind this technology is to exploit favorable structure in the problem's geometry to boost the practical performance of gradient-based methods. The proximal revolution has also influenced the further development of classical primal-dual optimization methods based on augmented Lagrangians. We review proximal variants of the celebrated Alternating Direction Method of Multipliers (ADMM) in Section \ref{sec:ADMM}. We then move on to give in-depth presentation of projection-free optimization methods based on linear minimization oracles, the classical \emph{Conditional Gradient} (CG) (a.k.a Frank-Wolfe) method and its recent variants. CG gained extreme popularity in large-scale optimization, mainly because of its good scalability properties and small iteration costs. Conceptually, it is an interesting optimization method, as it allows us to solve convex programming problems with complicated geometry on which proximal operators are not easy to evaluate. This, in fact, applies to many important domains, like the Spectrahedron, or domains defined via intersections of several half spaces. CG is also relevant when the iterates should preserve structural features of the desired solution, like sparsity. Section \ref{sec:CG} gives a comprehensive account of this versatile tool. All the methods we discussed so far generally provide sublinear convergence guarantees in terms of function values with iteration complexity of $O(1/{\eps})$ In his influential paper \cite{Nes83}, Nesterov published an optimal method with iteration complexity of $O(1/\sqrt{\eps})$ to reach an $\eps$-optimal solution. This was the starting point for the development of acceleration techniques for given FOMs. Section \ref{sec:accelerated} summarizes the recent developments in this field. While writing this survey, we tried to give a holistic presentation of the main methods in use. At various stages in the survey, we establish connections, if not equivalences, between various methods. For many of the key results we provide self-contained proofs to illustrate the main lines of thought in developing FOMs for convex optimization problems.

\paragraph{Notation} We use standard notation and concepts from convex and variational analysis, which, unless otherwise specified, can all be found in the monograph \cite{RW98,HirLem01,BauCom16}. Throughout this article, we let $\setV$ represent a finite-dimensional vector space of dimension $n$ with norm $\norm{\cdot}$. We will write $\setV^{\ast}$ for the (algebraic) dual space of $\setV$ with duality pairing $\inner{y,x}$ between $y\in\setV^{\ast}$ and $x\in\setV$. 
The dual norm of $y\in\setV^{\ast}$ is $\norm{y}_{\ast}=\sup\{\inner{y,x}\vert\quad \norm{x}\leq 1\}$. The set of proper lower semi-continuous functions $f:\setV\to(-\infty,\infty]$ is denoted as $\Gamma_{0}(\setV)$. The (effective) domain of a function $f\in\Gamma_{0}(\setV)$ is defined as $\dom f=\{x\in\setV\vert f(x)<\infty\}$. For a given continuously differentiable function $f:\setC\subseteq \setV\to\R$ we denote its gradient vector 
\[
\nabla f(x_{1},\ldots,x_{n})=\left(\frac{\partial f}{\partial x_{1}},\ldots,\frac{\partial f}{\partial x_{n}}\right)^{\top}.
\]
The subdifferential at a point $x\in\setC\subseteq\setV$ of a convex function $f:\setV\to\R\cup\{+\infty\}$ is denoted as 
\begin{align*}
\partial f(x)=\{p\in\setV^{\ast}\vert f(y)\geq f(x)+\inner{p,y-x}\quad\forall y\in\setV\}.
\end{align*}
The elements of $\partial f(x)$ are called subgradients. The subdifferential is the set-valued mapping $\partial f:\setV\to 2^{\setV^{\ast}}$. 

As a notational convention, we write matrices in bold capital fonts. Given some set $\setX\subseteq\setV$, denote its relative interior as $\rint(\setX)$. Recall that, if the dimension of the set $\setX$ agrees with the dimension of the ground space $\setV$, then the relative interior coincides with the topological interior, which we denote as $\Int(\setX)$. Hence, the two notions differ only in situations where $\setX$ is contained in a lower-dimensional submanifold. We denote the closure as $\cl(\setX)$. The boundary of $\setX$ is defined in the usual way $\bd(\setC)=\cl(\setC)\setminus\Int(\setC)$. 

\section{Composite convex optimization}
\label{sec:Model}

In this survey we focus on the generic optimization problem 
\begin{equation}\label{eq:Opt}\tag{P}
\min_{x\in\setX}\{\Psi(x)=f(x)+r(x)\},
\end{equation}
where 
\begin{itemize}
\item $\setX\subseteq\setV$ is a nonempty closed convex set embedded in a finite-dimensional real vector space $\setV$;
\item $f(\cdot)$ is $L_{f}$-smooth meaning that it is differentiable on $\setV$ with a $L_{f}$-Lipschitz continuous gradient on $\setX$:
\begin{equation}
\label{eq:fsmooth}
\norm{\nabla f(x)-\nabla f(x')}_{\ast}\leq L_{f}\norm{x-x'}\qquad\forall x,x'\in\setX.
\end{equation}

\item $r\in\Gamma_{0}(\setV)$ and $\mu$-strongly convex on $\setV$ for some $\mu\geq 0$ with respect to a norm $\norm{\cdot}$ on $\setV$. This means that for all $x,y\in\dom r$, and any selection $r'(x)\in\partial r(x)$, we have 
$$
r(y)\geq r(x)+\inner{r'(x),y-x}+\frac{\mu}{2}\norm{x-y}^{2}.
$$
\end{itemize}

Finally, we are interested in problems with a well-posed problem formulation.
\begin{assumption}
$\dom r\cap\setX\neq\emptyset$. 
\end{assumption}
The most important examples of function $r$ are as follows:
\begin{itemize}
\item $r$ is an indicator function of a closed convex set $\setC$ with $\setC\cap\setX\neq\emptyset$:
\begin{equation}\label{eq:indicator}
r(x)=\delta_{\setC}(x):=\left\{\begin{array}{cc} 
0 & \text{ if }x\in\setC,\\
+\infty & \text{if }x\notin\setC.
\end{array}\right.
\end{equation}
\item $r$ is a self-concordant barrier \cite{NesNem94,Nes18} for a closed convex set $\setC\subset\setV$ with $\setC\cap\setX\neq\emptyset$. 
\item $r$ is a nonsmooth convex function with relatively simple structure. For example, it could be a norm regularization like the celebrated $\ell_{1}$-regularizer 
\begin{equation}
r(x)=\left\{\begin{array}{cc} 
\norm{x}_{1} & \text{ if }\norm{x}_{1}\leq R,\\
+\infty & \text{ else}
\end{array}\right. .
\end{equation}
This regularizer plays a fundamental role in high-dimensional statistics \cite{BuhvdG11} and signal processing \cite{DauDefDeM04,BruDonEla09}.
\end{itemize}

For characterizing solutions to our problem \eqref{eq:Opt}, define the \emph{tangent cone} associated with the closed convex set $\setX\subseteq\setV$ as
\[
\TC_{\setX}(x):=\left\{\begin{array}{cc}
\{v=t(x' -x)\vert x'\in\setX,t\geq 0\}\subseteq\setV & \text{if }x\in\setX,\\
\emptyset & \text{else.}
\end{array}\right.
\]
and the \emph{normal cone} associated to the closed convex set $\setX$ at $x\in\setX$ as the polar cone of $\TC_{\setX}(x)$:
\[
\NC_{\setX}(x):=\left\{\begin{array}{cc} 
\{p\in\setV^{\ast}\vert \sup_{v\in\TC(x)} \inner{p,v}\leq 0\} & \text{if } x\in\setX\\
\emptyset& \text{else.}
\end{array}
\right.
\]
We remark that $\partial \delta_{\setX}(x)=\NC_{\setX}(x)$ for all $x\in\setX$.

Given the feasible set $\setX\subseteq\setV$, we denote the minimal function value 
\begin{equation}\label{eq:Psimin}
\Psi_{\min}(\setX)=\inf_{x\in\setX}\Psi(x).
\end{equation}
We are focussing in this survey on problems which are solvable. This justifies the next assumption. 
\begin{assumption}\label{ass:S}
$\setX^{\ast}=\{x\in\setX\vert \Psi(x)=\Psi_{\min}(\setX)\}\neq\emptyset.$
\end{assumption}
Given the standing hypothesis on the functions $f$ and $r$, it is easy to see that $\setX^{\ast}$ is always a closed convex set. Moreover, if $\mu>0$, then problem \eqref{eq:Opt} is \emph{strongly convex}, and so $\setX^{\ast}$ is a singleton.\\

Given the structural assumptions of the model problem \eqref{eq:Opt}, the sum rule of subgradients implies that all points in the solution set $\setX^{\ast}$ satisfy the monotone inclusion (Fermat's rule)
\begin{equation}\label{eq:Fermat}
0\in\nabla f(x^{\ast})+\partial r(x^{\ast})+\NC_{\setX}(x^{\ast}).
\end{equation}
This means that there exists $\xi\in\partial r(x^{\ast})$ such that 
\begin{equation}\label{eq:VI-f}
\inner{\nabla f(x^{\ast})+\xi,v}\geq 0\qquad\forall v\in\TC_{\setX}(x^{\ast}).
\end{equation}

The structured composite optimization problem \eqref{eq:Opt} has attracted a lot of interest in convex programming over the last 20 years motivated by a number of important applications. This led to a rich interplay between convex programming on the one hand and machine learning and signal/image processing on the other hand. 
Indeed, several work-horse models in these applied fields are of the composite type
\begin{equation}\label{eq:structured}
\Psi(x)=g(\bA x)+r(x) 
\end{equation}
where $g:\setE\to\R$ is a smooth function defined on a finite-dimensional set $\setE$ (usually of lower dimension than $\setV$), and $\bA\in\BL(\setV,\setE)$ is bounded linear operator mapping points $x\in\setV$ to elements $\bA x\in\setE$. Convexity allows us to switch between primal and dual formulations freely, so that the above problem can be equivalently considered as a convex-concave minimax problem 
\begin{equation}\label{eq:saddle}
\min_{x\in\setX}\max_{y\in\setE}\{r(x)+\inner{\bA x,y}-g^{\ast}(y)\}
\end{equation}
Such minimax problems have been of key importance in signal processing and machine learning \cite{JudKilNem13,JudNemML1,JudNemML2}, game theory \cite{Sor00}, decomposition methods \cite{Tse91} and its very recent innovation around generative adversarial networks \cite{Good-GANS}.

Another canonical class of optimization problems in machine learning is the finite-sum model 
\begin{equation}
\Psi(x)=\frac{1}{N}\sum_{i=1}^{N}f_{i}(x)+r(x),
\end{equation}
which comes from supervised learning, where $f_{i}(x)$ corresponds to the loss incurred on the $i$-th data sample using a hypothesis parameterized by the decision variable $x$. Hence, in practice, $N$ is an extremely large number as it corresponds to the size of the data set. The recent literature on variance reduction techniques and distributed optimization is very active in making such large scale optimization problems tractable. Surveys on the latest developments in these fields can be found in \cite{GowSchBacRic20} and the comprehensive textbook by Lan \cite{lan2020first}.

\section{The Proximal Gradient Method}
\label{sec:MD}
%
\subsection{Motivation}
We are starting our survey on first-order methods for solving convex optimization problems with perhaps the most basic optimization method known to every student who took a course in mathematical programming: the gradient projection scheme. In the context of the composite optimization problem \eqref{eq:Opt}, a classical and very powerful idea is to construct numerical optimization methods by exploiting problem structure. Following this philosophy, we determine the position of the next iterate by minimizing the sum of the linearization of the smooth part, the non-smooth part $r\in\Gamma_{0}(\setV)$, and a quadratic regularization term with weight $\gamma >0$: 
\begin{equation}\label{eq:PG1}
x^{+}(\gamma)=\argmin_{u\in\setX}\{f(x)+\inner{\nabla f(x),u-x} +r(u)+\frac{1}{\gamma}\norm{u-x}^{2}_{2}\}.
\end{equation}
Disregarding terms which do not influence the computation of the solution of this strongly convex minimization problem, and absorbing the set constraint into the non-smooth part by defining $\phi(x)=r(x)+\delta_{\setX}(x)$, we see that \eqref{eq:PG1} can be equivalently written as 
\begin{equation}\label{eq:PG}
x^{+}(\gamma)=\argmin_{u\in\setV}\left\{\gamma \phi(u)+\frac{1}{2}\norm{u-(x-\gamma\nabla f(x))}^{2}_{2}\right\}.
\end{equation}
The trained reader will immediately see some geometric principles involved in this minimization routine; Indeed, if $r$ would be constant on $\setX$ (say $0$ for concreteness), then the rule \eqref{eq:PG} is nothing else than the Euclidean projection of the directional vector $x-\gamma\nabla f(x)$ onto the set $\setX$. In this case, the minimization routine returns the classical projected gradient step $x^{+}(\gamma)=P_{\setX}(x-\gamma\nabla f(x)).$ Iterating the map $T_{\gamma}:=P_{\setX}\circ(\Id-\gamma\nabla f)$ generates the Gradient projection method, which can be traced back to the 1960s (see \cite{Ber99} for the history of this method). A new obstacle arises in cases where the non-smooth function $r$ is non-trivial over the relevant domain $\setX$. A fundamental idea, going back to Moreau \cite{Mor65}, is to define the \emph{proximity operator} $\prox_{\phi}:\setV\to\setV$ associated with a function $\phi\in\Gamma_{0}(\setV)$ as\footnote{The repository \url{http://proximity-operator.net/index.html} provides codes and explicit expressions for proximity operators of many standard functions. A useful MATLAB implementation of proximal methods is described in \cite{BeckFOM}.} 
\begin{equation}\label{eq:Mprox}
\prox_{\phi}(x):=\argmin_{u\in\setV}\left\{\phi(u)+\frac{1}{2}\norm{u-x}^{2}_{2}\right\}.
\end{equation} 
In terms of the proximity-operator, the minimization step \eqref{eq:PG} becomes
\begin{equation}
x^{+}(\gamma)=T_{\gamma}(x):=\prox_{\gamma \phi}(x-\gamma \nabla f(x)).
\end{equation}
Iterating the map $T_{\gamma}$ yields a new and more general method, known in the literature as the proximal gradient method (PGM).

 \begin{algorithm}{\bf{The Proximal Gradient Method} (PGM)}
\\
	{\bf Input:} pick $x^{0}\in\setX.$\\
	{\bf General step:} For $k=0,1,\ldots$ do:\\
	\qquad pick $\gamma_{k}>0$.\\
	\qquad set $x^{k+1}=\prox_{\gamma_{k}\phi}\left(x^{k}-\gamma_{k} \nabla f(x^{k})\right)$.
\end{algorithm}

PGM is a very powerful method which received enormous interest in optimization and its applications. For a survey in the context of signal processing we refer the reader to \cite{ComPes11}. A general survey on proximal operators has been given by Parikh and Boyd \cite{ParBoy14} and Beck \cite{Beck17}, and many more references can be found in these references.

The special case when $f=0$ is known as the \emph{proximal point method}, which reads explicitly as 
\begin{equation}
x^{k+1}=\prox_{\gamma_{k} \phi}(x^{k})=\argmin_{u\in\setV}\{\phi(u)+\frac{1}{2\gamma_{k}}\norm{u-x^{k}}^{2}\}.
\end{equation}
The value function 
$$
\phi_{\gamma}(x)=\inf_{u}\{\phi(u)+\frac{1}{2\gamma}\norm{u-x}^{2}\}
$$
is called the \emph{Moreau envelope} of the function $\phi$, and is an important smoothing and regularization tool, frequently employed in numerical analysis. Indeed, for a function $\phi\in\Gamma_{0}(\setV)$, its Moreau envelope is finite everywhere, convex and has $\gamma^{-1}$-Lipschitz continuous gradient on $\setV$ given by 
$\nabla \phi_{\gamma}(x)=\frac{1}{\gamma}(x-\prox_{\gamma\phi}(x)).$


\subsection{Bregman Proximal Setup}
\label{S:Bregman_setup}
The basic idea behind non-Euclidean extensions of PGM is to replace the $\ell_{2}$-norm $\frac{1}{2}\norm{u-x}^{2}$ by a different distance-like function which is tailored to the geometry of the feasible set $\setX\subseteq\setV$. These non-Euclidean distance-like functions that will be used are \emph{Bregman divergences}. The transition from Euclidean to non-Euclidean distance measures is motivated by the usefulness and flexibility of the latter in computational perspectives and potentials for improving convergence properties for specific application domains. In particular, the move from Euclidean to non-Euclidean distance measures allows to adapt the algorithm to the underlying geometry, typically explicitly embodied in the set constraint $\setX$, see e.g. \cite{AusTeb09}. This can not only positively affect the per-iteration complexity, but also will have a footprint on the overall iteration complexity of the method, as we will demonstrate in this section.


In the rest of this section, we assume that the set constraint $\setX$ is a closed convex set with nonempty relative interior $\rint(\setX)$. 
The point of departure of Bregman Proximal algorithms is to introduce a distance generating function $h:\setV\to(-\infty,\infty]$, which is a barrier-type of mapping suitably chosen to capture geometric features of the set $\setX$. 
\begin{definition}
\label{Def:DGF}
Let $\setX$ be a compact convex subset of $\setV$. We say that $h\in\Gamma_{0}(\setV)$ is a \emph{\ac{DGF}} with modulus $\alpha>0$ with respect to $\norm{\cdot}$ on $\setX$ if 
\begin{enumerate}
\item Either $\dom h=\rint(\setX)$ or $\dom h=\setX$; 
\item $h$ is differentiable over $\setX^{\circ}=\{x\in\setX\vert \partial h(x)\neq\emptyset\}$.  
\item $h$ is $\alpha$-strongly convex on $\setX$ relative to $\norm{\cdot}$
\begin{equation}
h(x')\geq h(x)+\inner{\nabla h(x),x'-x}+\frac{\alpha}{2}\norm{x'-x}^{2}
\end{equation}
for all $x\in\setX^{\circ}$ and all $x'\in\setX$.
\end{enumerate}
We denote by $\scrH_{\alpha}(\setX)$ the set of DGFs on $\setX$.
\end{definition}
Note that $\setX^{\circ}$ contains the relative interior of $\setX$ and, restricted to $\setX^{\circ}$, $h$ is continuously differentiable with $\partial h(x)=\{\nabla h(x)\}$. In many proximal settings we are interested in \ac{DGF}s which act as barriers on the feasible set $\setX$. Such DGFs are included in the case $\dom h=\rint(\setX)$. Naturally, the barrier properties of the function $h$ are captured by its scaling near $\bd(\setX)$, usually encoded in terms of the notion of \emph{essential smoothness} \cite{Roc70}.

\begin{definition}[Essential smoothness]
\label{def:Legendre}
$h\in\scrH_{\alpha}(\setX)$ is \emph{essentially smooth} if for all sequences $(x_{j})_{j\in\N}\subseteq\rint(\setX)$ with $\lim_{j\to\infty}\dist(\{x_{j}\},\bd(\setX))=0$, we have $\lim_{j\to\infty}\norm{\nabla h(x_{j})}_{\ast}=\infty$.
\end{definition}

Given a \ac{DGF} $h\in\scrH_{\alpha}(\setX)$, we define the \emph{Bregman divergence} $D_{h}:\dom h \times \setX^{\circ} \to\R$ induced by $h$ as 
\begin{equation}\label{eq:Breg}
D_{h}(u,x)=h(u)-h(x)-\inner{\nabla h(x),u-x}.
\end{equation}
Since $h\in\scrH_{\alpha}(\setX)$, it follows immediately that  
\begin{equation}\label{eq:Dlower}
D_{h}(u,x)\geq\frac{\alpha}{2}\norm{u-x}^{2}\qquad \forall x\in\setX^{\circ},u\in\dom h.
\end{equation}
Hence, Bregman divergences are zero on the main diagonal of $\setX^{\circ}\times\setX^{\circ}$, but in general they are not symmetric and they do not satisfy a triangle inequality. This disqualifies them to carry the label of a metric, but still they can be interpreted as distance measures on $\setX^{\circ}$.

The convex conjugate $h^{\ast}(y)=\sup_{x\in\setV}\{\inner{x,y}-h(x)\}$ for a function $h\in\scrH_{\alpha}(\setX)$ is known to be differentiable on $\setV^{\ast}$ and $\frac{1}{\alpha}$-Lipschitz smooth, i.e. 
  \begin{equation}
 h^{\ast}(y_{2})\leq h^{\ast}(y_{1})+\inner{\nabla h^{\ast}(y_{1}),y_{2}-y_{1}}+\frac{1}{2\alpha}\norm{y_{2}-y_{1}}^{2}_{\ast}.
 \end{equation}
 for all $y_{1},y_{2}\in\setV^{\ast}$. In fact, Section 12H in \cite{RW98} gives us the following general result which is of fundamental importance for the following approaches. 
 \begin{proposition}
 \label{prop:Rock}
 Let $\omega:\Rn\to\R\cup\{+\infty\}$ be a proper convex and lower semi-continuous function. Consider the following statements:
 \begin{enumerate}
 \item[(a)] $\omega$ is strongly convex with parameter $\alpha>0$
 \item[(b)] The subdifferential mapping $\partial\omega:\Rn\to 2^{\Rn}$ is strongly monotone with parameter $\alpha>0$:
 \begin{equation}
 \inner{u-v,x-y}\geq\alpha\norm{x-y}^{2}\qquad\forall (x,u),(v,y)\in\gr(\partial\omega)
 \end{equation}
 \item[(c)] The inverse map $(\partial\omega)^{-1}$ is single-valued and Lipschitz continuous with modulus $\frac{1}{\alpha}$;
 \item[(d)] $\omega^{\ast}$ is finite and differentiable everywhere.
 \end{enumerate}
 Then $(a)\iff (b)\Rightarrow (c)\iff (d)$.
 \end{proposition}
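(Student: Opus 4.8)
The plan is to prove the chain in three blocks — the equivalence $(a)\iff(b)$, the one-way implication $(b)\Rightarrow(c)$ (which is where the explicit modulus $1/\alpha$ is produced), and the equivalence $(c)\iff(d)$ — using throughout the Fenchel identity $(\partial\omega)^{-1}=\partial\omega^{\ast}$, valid for every proper lsc convex $\omega$ on $\Rn$.

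\emph{Block 1: $(a)\iff(b)$.} For $(a)\Rightarrow(b)$ I would take $(x,u),(y,v)\in\gr(\partial\omega)$, write the strong-convexity inequality of $(a)$ twice (at $x$ tested against $y$, and at $y$ tested against $x$), and add the two; the function values cancel and one is left with $\inner{u-v,x-y}\geq\alpha\norm{x-y}^{2}$, which is exactly $(b)$. For the converse $(b)\Rightarrow(a)$, fix $x,y\in\dom\omega$, put $x_{t}=x+t(y-x)$ and study the convex function $\psi(t)=\omega(x_{t})$ on $[0,1]$. Its a.e.-derivative satisfies $\psi'(t)=\inner{u_{t},y-x}$ for some $u_{t}\in\partial\omega(x_{t})$, and strong monotonicity applied to the pair $x_{t},x$ gives $\inner{u_{t}-u,y-x}\geq\alpha t\norm{y-x}^{2}$, i.e.\ $\psi'(t)\geq\inner{u,y-x}+\alpha t\norm{y-x}^{2}$; integrating over $[0,1]$ yields the strong-convexity estimate $(a)$. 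The only delicate point here is the justification of the fundamental theorem of calculus for $\psi$, which follows from the local absolute continuity and a.e.\ differentiability of finite convex functions of one variable.

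\emph{Block 2: $(b)\Rightarrow(c)$.} Single-valuedness is immediate: if $u\in\partial\omega(x)\cap\partial\omega(y)$, then $(b)$ applied to $(x,u),(y,u)$ forces $0\geq\alpha\norm{x-y}^{2}$, hence $x=y$. For the Lipschitz estimate, set $x=(\partial\omega)^{-1}(u)$ and $y=(\partial\omega)^{-1}(v)$ and combine $(b)$ with the generalized Cauchy--Schwarz inequality $\inner{u-v,x-y}\leq\norm{u-v}_{\ast}\norm{x-y}$ to obtain $\norm{x-y}\leq\tfrac{1}{\alpha}\norm{u-v}_{\ast}$. To promote this to genuine Lipschitz continuity \emph{on all of} $\setV^{\ast}$ I would invoke coercivity: since $(a)$ holds, $\omega$ grows at least quadratically and is thus supercoercive, so $\omega^{\ast}$ is finite on $\setV^{\ast}$ and $(\partial\omega)^{-1}=\partial\omega^{\ast}$ is nonempty-valued everywhere, giving a map defined on the whole dual space.

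\emph{Block 3: $(c)\iff(d)$.} Again via $(\partial\omega)^{-1}=\partial\omega^{\ast}$. If $(d)$ holds, then finiteness and differentiability of $\omega^{\ast}$ mean $\partial\omega^{\ast}$ is a singleton at every point, i.e.\ $(\partial\omega)^{-1}$ is single-valued, and $\nabla\omega^{\ast}$ is automatically continuous; this gives $(c)$. Conversely, if $(\partial\omega)^{-1}=\partial\omega^{\ast}$ is single-valued with full domain, then $\omega^{\ast}$ is finite everywhere and a finite convex function whose subdifferential is everywhere a singleton is differentiable, which is $(d)$. I expect the main obstacles to be the converse $(b)\Rightarrow(a)$ (the measure-theoretic bookkeeping in the integration step) and the full-domain/coercivity argument needed in Block 2; I would also flag that the explicit constant $1/\alpha$ in $(c)$ is inherited from the estimate in Block 2 rather than from differentiability of $\omega^{\ast}$ alone, which is exactly why the last two statements are only \emph{implied} by, and not equivalent to, the first two.
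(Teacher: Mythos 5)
Your three-block architecture via the Fenchel identity $(\partial\omega)^{-1}=\partial\omega^{\ast}$ is the standard textbook route, and it is worth saying up front that the paper itself contains no proof to compare against: the proposition is simply quoted from Section 12H of \cite{RW98}. Your implications (a)$\Rightarrow$(b), (b)$\Rightarrow$(c) and (c)$\Rightarrow$(d) are correct. The first genuine gap is in (b)$\Rightarrow$(a): the identity $\psi'(t)=\inner{u_{t},y-x}$ presupposes $\partial\omega(x_{t})\neq\emptyset$ along the segment, and this can fail at \emph{every} interior point of a segment in $\dom\omega$. For instance, $\omega(x_{1},x_{2})=-\sqrt{x_{1}}+\tfrac{\alpha}{2}(x_{1}^{2}+x_{2}^{2})+\delta_{\{x_{1}\geq 0\}}(x)$ is proper, lsc and convex, its subdifferential is strongly monotone with constant $\alpha$ (so it satisfies your hypothesis (b)), and yet $\partial\omega=\emptyset$ at every point of the segment $\{(0,t):t\in[0,1]\}\subset\dom\omega$. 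So the delicate point is not the fundamental theorem of calculus, which you flag, but subgradient existence, which you do not. The repair is standard: prove the inequality first for $y\in\rint(\dom\omega)$, where the line-segment principle puts $x_{t}\in\rint(\dom\omega)$ for all $t\in(0,1]$ and hence $\partial\omega(x_{t})\neq\emptyset$, so that strong monotonicity applied to $(x_{t},u_{t})$ and $(x,u)$ gives $\psi'(t)\geq\inner{u,y-x}+\alpha t\norm{y-x}^{2}$ a.e.; then extend to arbitrary $y\in\dom\omega$ by moving $y$ slightly into the relative interior and passing to the limit, using convexity for the upper estimate and lower semicontinuity for the lower one. Note also that your Block 2 argument for full domain of $(\partial\omega)^{-1}$ invokes (a), hence leans on this repaired step (alternatively, full domain follows directly from (b) and Minty's theorem, since a maximally monotone, strongly monotone operator is surjective).

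The second gap cannot be closed, because it is a defect of the statement that your proof glosses over: (d)$\Rightarrow$(c) \emph{with the modulus} $1/\alpha$ is false. Differentiability of $\omega^{\ast}$ yields continuity of $\nabla\omega^{\ast}$, which is all your Block 3 establishes; it cannot yield Lipschitz continuity, let alone with a prescribed constant. Concretely, $\omega(x)=\tfrac{3}{4}|x|^{4/3}$ on $\R$ is proper, lsc and convex, $\omega^{\ast}(y)=\tfrac{1}{4}y^{4}$ is finite and differentiable everywhere, yet $(\partial\omega)^{-1}(y)=\nabla\omega^{\ast}(y)=y^{3}$ is not Lipschitz with any modulus. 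What \cite{RW98} actually prove is the three-way equivalence of (a), (b) and ``$\omega^{\ast}$ is differentiable with $\nabla\omega^{\ast}$ Lipschitz of modulus $1/\alpha$''; plain differentiability (d) is strictly weaker, so the correct chain is (a)$\iff$(b)$\iff$(c)$\Rightarrow$(d) rather than (a)$\iff$(b)$\Rightarrow$(c)$\iff$(d). Your closing remark shows you sensed the tension---you concede that the constant in (c) ``is inherited from the estimate in Block 2 rather than from differentiability of $\omega^{\ast}$ alone''---but you cannot both concede that and claim to have proved (c)$\iff$(d). The honest fix is either to weaken (c) to ``single-valued with full domain'' (then (c)$\iff$(d) does hold by your Block 3 argument, continuity being automatic for gradients of finite convex functions), or to keep the modulus and instead prove (c)$\Rightarrow$(b) via conjugate duality (the descent lemma for $\omega^{\ast}$ plus biconjugation), thereby restoring the full equivalence of (a), (b), (c).
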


Once we endow our set $\setX$ with a Bregman divergence, the technology generating a gradient method in this non-Euclidean setting is the \emph{prox-mapping}.

 \begin{definition}[Prox-Mapping]
 Given $h\in\scrH_{\alpha}(\setX)$ and $\phi\in\Gamma_{0}(\setV)$, define the \emph{prox-mapping} as

 \begin{equation}\label{eq:prox}
 \scrP^{h}_{\phi}(x,y):=\argmin_{u\in\setX}\{\phi(u)+\inner{y,u-x}+D_{h}(u,x)\}.
\end{equation}
\end{definition}
The prox-mapping takes as inputs a "primal-dual" pair $(x,y)\in \setX^{\circ}\times\setV^{\ast}$ where $x$ is the current iterate, and $y$ is a dual variable representing the signal we obtain on the smooth part of the minimization problem \eqref{eq:Opt}. Various conditions on the well-posedness of the prox-mapping have been stated in the literature. We will not repeat them here, but rather refer to the recent survey \cite{Teb18}. 

It will be instructive to go over some standard examples of the Bregman proximal setup. See also \cite{ComWaj05}, \cite{BauCom16}, and \cite{ben-tal2020lectures}. 
 
 \begin{example}[Proximity Operator]
 We begin by revisiting the Euclidean projection on some convex closed subset $\setX$ of the vector space $\setV$. Letting $h(x)=\frac{1}{2}\norm{x}^{2}_{2}+\delta_{\setX}(x)$ for $x\in\setV$, we readily see that $\setX^{\circ}=\setX=\dom(h)$. Moreover, for $x\in\setX$, the vector field $\nabla h(x)=x$ is a continuous selection of $\partial h(x)$ for all $x\in\setX$. Hence, the associated Bregman divergence is $D_{h}(u,x)=\frac{1}{2}\norm{u-x}^{2}_{2}$ for all $u,x\in\setX$. Given a function $\phi\in\Gamma_{0}(\setV)$, the resulting prox-mapping reads as 
 $$
 \scrP^{h}_{\phi}(x,y)=\argmin_{u\in\setX}\{\phi(u)+\inner{y,u-x}+\frac{1}{2}\norm{u-x}^{2}_{2}\}=\prox_{\phi+\delta_{\setX}}(x-y)
 $$
where $\prox_{\phi}$ is the proximity operator defined in \eqref{eq:Mprox}. 
 \end{example}
 
 \begin{example}[Entropic Regularization]
 Let $\setX=\{x\in\Rn_{+}\vert \sum_{i=1}^{n}x_{i}=1\}$
 denote the unit simplex in $\setV=\Rn$. Define the function $\psi:\R\to[0,\infty]$ as 
 \[
 \psi(t):=\left\{\begin{array}{ll} 
 t\ln(t)-t & \text{if }t>0,\\
 0 &\text{if } t=0,\\
 +\infty & \text{else}
 \end{array}
 \right.
 \]
As \ac{DGF} consider the \emph{Boltzmann-Shannon entropy} $h(x):=\sum_{i=1}^{n}\psi(x_{i})+\delta_{\{x\in\setV\vert\sum_{i=1}^{n}x_{i}=1\}}$. Endowing the ground space $\setV$ with the $\ell_{1}$ norm, it can be shown that $h\in\scrH_{1}(\setX)$ with $\dom h=\setX$ and $\setX^{\circ}=\{x\in\R^{n}_{++}\vert\sum_{i=1}^{n}x_{i}=1\}=\rint(\setX)$. The resulting Bregman divergence is the \emph{Kullback-Leibler divergence}
 \[
 D_{h}(u,x)=\sum_{i=1}^{n}u_{i}\ln\left(\frac{u_{i}}{x_{i}}\right)+\sum_{i=1}^{n}(x_{i}-u_{i}).
 \]
For $\phi=0$ a standard calculation gives rise to the prox-mapping 
\begin{equation}
[\scrP^{h}_{\delta_{\setX}}(x,y)]_{i}=\frac{x_{i}e^{y_{i}}}{\sum_{j=1}^{n}x_{j}e^{y_{j}}}\qquad 1\leq i\leq n,x\in\setX,y\in\setV^{\ast}.
\end{equation}
This mapping plays a key role in optimization, where it is known as exponentiated gradient descent \cite{BecTeb03,JudNazTsyVay05}. 
 \end{example}
 
 \begin{example}[Box Constraints]
Assume that $\setV=\Rn$ and $\setX=\prod_{i=1}^{n}[a_{i},b_{i}]$ where $0\leq a_{i}\leq b_{i}$. Given parameters $0\leq a\leq b$, define the \emph{Fermi-Dirac entropy} 
 \[
 \psi_{a,b}(t):=\left\{\begin{array}{ll} 
 (t-a)\ln(t-a)+(b-t)\ln(b-t) & \text{if }t\in (a,b),\\
 0 &\text{if } t\in\{a,b\},\\
 +\infty & \text{else}
 \end{array}
 \right.
 \]
 Then $h(x)=\sum_{i=1}^{n}\psi_{a_{i},b_{i}}(x_{i})$ is a \ac{DGF} on $\setX=\dom h$ with $\setX^{\circ}=\prod_{i=1}^{n}(a_{i},b_{i})$. 
 \end{example}
 
\begin{example}[Semidefinite Constraints]
Let $\setV$ be the set of real symmetric matrices and $\setX=\setS^{n}_{+}$ be the cone of real symmetric positive semi-definite matrices equipped with the inner product $\inner{\bA,\BB}=\tr(\bA\BB)$. Define $h(\XX)=\tr[\XX\log(\XX)]$ as the matrix-equivalent of the negative Boltzmann-Shannon entropy. It can be verified that $\dom h=\setX$ and $\nabla h(\XX)=\log(\XX)+\bI$. Hence, one sees that $\dom h=\setX$, and $\setX^{\circ}=\setS^{n}_{++}$, the cone of positive definite matrices. For $\XX'\in\setS^{n}_{++}$, the corresponding Bregman divergence is given by
\[
D_{h}(\XX',\XX)=\tr[\XX\log(\XX)-\XX\log(\XX')+\XX'-\XX]
\]
See \cite{DolTeb98} for further examples on matrix domains.  
 \end{example}
 
 \begin{example}[Spectrahedron]
 \label{ex:spectrahedron}
 Let $\setX=\{\XX\in\setS^{n}_{+}\vert \tr(\XX)\leq 1\}$ the unit spectrahedron of positive semi-definite matrices with the nuclear norm $\norm{\XX}_{1}=\tr(\XX)$.
 For this geometry, a widely used regularizer is the von Neumann entropy 
 \begin{equation}
 h(\XX)=\tr(\XX\log\XX)+(1-\tr(\XX))\log(1-\tr(\XX))
 \end{equation}
 It can be shown that this function is $\frac{1}{2}$-strongly convex with respect to the nuclear norm and $\dom h=\setX$, as well as $\setX^{\circ}=\{\XX\in\setS^{n}_{++}\vert \tr(\XX)< 1\}$.
 \end{example}
 
\begin{example}[2nd order cone constraints]
Let $\setV=\Rn$ and $L^{n}_{++}:=\{x\in\setV\vert x_{n}>(x_{1}^{2}+\ldots+x_{n-1}^{2})^{1/2}\}$ the interior of the second-order cone with closure denoted by $\setX$. Let $\bJ_{n}$ be the $n\times n$ diagonal matrix with $-1$ in its first $n-1$ diagonal entries and $1$ in the last one. Define $h(x)=-\ln(\inner{\bJ_{n}x,x})+\frac{\alpha}{2}\norm{x}^{2}_{2}$. Then $h\in\scrH_{\alpha}(\setX)$ with $\dom h=\setX^{\circ}=L^{n}_{++}\subset\setX$. The associated Bregman divergence is 
$$
D_{h}(x,u)=-\ln\left(\frac{\inner{\bJ_{n}x,x}}{\inner{\bJ_{n}u,u}}\right)+2\frac{\inner{\bJ_{n}x,u}}{\inner{\bJ_{n}u,u}}-2+\frac{\alpha}{2}\norm{x-u}^{2}_{2}.
$$
The proximal framework for general conic constraints has been developed in \cite{AusTeb06}. 
\end{example}

 If $\gamma>0$ is a step-size parameter and $y=\gamma\nabla f(x)$, then we obtain the \emph{Bregman proximal map} $T^{h}_{\gamma}(x)=\scrP^{h}_{\gamma r}(x,\gamma\nabla f(x))$ for all $x\in\setX$. Iterating this map generates a discrete-time dynamical system known as the \emph{Bregman proximal gradient method} (BPGM). 
 \begin{algorithm}{\bf{The Bregman Proximal Gradient Method} (BPGM)}
\\
	{\bf Input:} $h\in\scrH_{\alpha}(\setX)$. Pick $x^{0}\in\dom(r)\cap\setX^{\circ}.$\\
	{\bf General step:} For $k=0,1,\ldots$ do:\\
	\qquad pick $\gamma_{k}>0$.\\
	\qquad set $x^{k+1}=\scrP^{h}_{\gamma_{k} r}(x^{k},\gamma_{k}\nabla f(x^{k}))$.
\end{algorithm}

The BPGM approach consists of linearizing the differentiable part $f$ around $x$, adding the composite term $r$, and regularizing the sum with a proximal distance from the point $x$. When $h$ is the squared Euclidean norm, BPGM reduces to the classical proximal gradient method. For simple implementation, BPGM relies on the structural assumption that the prox-mapping $\scrP^{h}_{r}(x,y)$ can be evaluated efficiently on the trajectory $\{(x^{k},\gamma_{k}\nabla f(x^{k}))\vert 0\leq k\leq K\in\N^{\ast}\}$. This, often somewhat hidden, assumption is known in the literature as the "prox-friendliness" assumption, a terminology apparently coined by \cite{CoxJudNem14}).

\subsection{Basic Complexity Properties}
To analyze the iteration complexity of BPGM, let us define the convex lower semi-continuous and proper function 
\begin{equation}\label{eq:phi}
\varphi(u)=r(u)+\inner{y,u-x}+\delta_{\setX}(x),
\end{equation}
where $y\in\setV^{\ast}$ and $x \in \setX$ are treated as parameters. Under this terminology, we readily see that the basic iterate of BPGM is determined by the evaluation of the \emph{Bregman proximal operator} \cite{Teb92} applied to the function $\varphi\in\Gamma_{0}(\setV)$:
\begin{align*}
\prox_{\varphi}^{h}(x):=\argmin_{u \in \setV}\{\varphi(u)+D_{h}(u,x)\}.
\end{align*}
Writing the first-order optimality condition satisfied by the point $x^{+}=\scrP^{h}_{r}(x,y)$ in terms of the function $\varphi$ in \eqref{eq:phi}, we get
\begin{align*}
0\in\partial \varphi(x^{+})+\nabla h(x^{+})- \nabla h(x).
\end{align*}
Whence, there exists $\xi\in\partial \varphi(x^{+})$ such that, for all $u \in \setX$, 
\begin{align}
\label{eq:sub_prob_opt_cond}
\inner{\xi+\nabla h(x^{+})-\nabla h(x),x^{+}-u}\leq 0.
\end{align}
Via the subgradient inequality for the convex function $u\mapsto\varphi(u)$, we obtain for all $u \in \setX$:
\begin{equation}
\varphi(x^{+})-\varphi(u)\leq \inner{\xi,x^{+}-u} \leq \inner{\nabla h(x)-\nabla h(x^{+}),x^{+}-u}.
\end{equation}

For further analysis, we need the celebrated three-point identity, due to \cite{CT93}.
\begin{lemma}[3-point lemma]
\label{Lm:3-point}
For all $x,y\in\setX^{\circ}$ and $z\in\dom h$ we have
\begin{align*}
D_{h}(z,x)-D_{h}(z,y)-D_{h}(y,x)=\inner{\nabla h(x)-\nabla h(y),y-z}.
\end{align*}
\qed
\end{lemma}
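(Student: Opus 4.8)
The plan is to prove this purely by substitution into the definition of the Bregman divergence, since the identity is algebraic and holds for any differentiable $h$; no convexity or strong-convexity of $h$ is actually needed for the equality itself. First I would recall that by \eqref{eq:Breg},
\begin{align*}
D_{h}(z,x)&=h(z)-h(x)-\inner{\nabla h(x),z-x},\\
D_{h}(z,y)&=h(z)-h(y)-\inner{\nabla h(y),z-y},\\
D_{h}(y,x)&=h(y)-h(x)-\inner{\nabla h(x),y-x}.
\end{align*}
Here the hypotheses are exactly what make every term well-defined: $x,y\in\setX^{\circ}$ guarantees that $\nabla h(x)$ and $\nabla h(y)$ exist as genuine gradients (by the remark following Definition~\ref{Def:DGF}), while $z\in\dom h$ ensures $h(z)<\infty$.

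Next I would form the combination $D_{h}(z,x)-D_{h}(z,y)-D_{h}(y,x)$ and observe that all function-value terms cancel: the signed sum of the $h(z),h(x),h(y)$ contributions is identically zero. This leaves only the three inner-product terms, namely $-\inner{\nabla h(x),z-x}+\inner{\nabla h(y),z-y}+\inner{\nabla h(x),y-x}$, and the entire content of the lemma is the bookkeeping of these three pairings.

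The final step is to regroup by gradient. Collecting the two terms carrying $\nabla h(x)$ gives $\inner{\nabla h(x),(y-x)-(z-x)}=\inner{\nabla h(x),y-z}$, while the remaining term is $\inner{\nabla h(y),z-y}=-\inner{\nabla h(y),y-z}$. Adding these yields $\inner{\nabla h(x)-\nabla h(y),y-z}$, which is precisely the right-hand side, completing the proof. There is no genuine obstacle here: the only point demanding attention is verifying that the cancellation of the $h$-terms is exact and that the linear-in-argument terms regroup cleanly, both of which are immediate once the three definitions are written out. The role of the domain restrictions is solely to keep every expression finite and every gradient defined, not to drive any inequality, which is why the statement is an identity rather than a bound.
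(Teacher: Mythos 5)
Your proof is correct: expanding the three Bregman divergences via \eqref{eq:Breg}, cancelling the $h$-values, and regrouping the pairings is exactly the standard verification of this identity, and your observation that the domain hypotheses serve only to make the gradients and function values well defined (no convexity being used) is accurate. The paper itself omits the proof entirely, citing \cite{CT93}, so your argument supplies precisely the computation the reader is expected to carry out.
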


This yields immediately, 
\begin{align*}
\varphi(x^{+})-\varphi(u)\leq D_{h}(u,x)-D_{h}(u,x^{+})-D_{h}(x^{+},x).
\end{align*}

Performing the formal substitution $r\leftarrow \gamma r$ and $y\leftarrow \gamma \nabla f(x)$ in the definition of the function $\varphi$ in \eqref{eq:phi}, this delivers the inequality 
 \begin{equation}\label{eq:r}
 \gamma(r(x^{+})-r(u))\leq \gamma\inner{\nabla f(x),u-x^{+}}+D_{h}(u,x)-D_{h}(u,x^{+})-D_{h}(x^{+},x).
 \end{equation}
 
Note that if $x^{+}$ is calculated inexactly in the sense that instead of \eqref{eq:sub_prob_opt_cond} it holds that
\begin{align}
\label{eq:sub_prob_opt_cond_inexact}
\inner{\xi+\nabla h(x^{+})-\nabla h(x),x^{+}-u}\leq \Delta
\end{align}
for some $\Delta \geq 0$, then instead of \eqref{eq:r} we have
 \begin{equation}\label{eq:r_inexact}
 \gamma(r(x^{+})-r(u))\leq \gamma\inner{\nabla f(x),u-x^{+}}+D_{h}(u,x)-D_{h}(u,x^{+})-D_{h}(x^{+},x) + \Delta.
\end{equation}
See \cite{AusTeb06} for an explicit analysis of the error-prone implementation.

Since $f$ is assumed to possess a Lipschitz continuous gradient on $\setV$, the classical "descent Lemma" \cite{Nes18} tells us that
 \begin{equation}\label{eq:DLclassical}
 f(x^{+})\leq f(x)+\inner{\nabla f(x),x^{+}-x}+\frac{L_{f}}{2}\norm{x^{+}-x}^{2}.
 \end{equation}
 Additionally, for all $u\in\dom h$, convexity of $f$ on $\setV$ implies
 \[
 f(u)\geq f(x)+\inner{\nabla f(x),u-x}.
 \]
 Therefore, combining this with \eqref{eq:DLclassical} and using \eqref{eq:Dlower}, we obtain for any $u\in\dom h$,
$$
 f(x^{+})-f(u)\leq\inner{\nabla f(x),x^{+}-u}+\frac{L_{f}}{2}\norm{x^{+}-x}^{2}\leq \inner{\nabla f(x),x^{+}-u}+\frac{L_{f}}{\alpha}D_{h}(x^{+},x).
$$
 Multiplying this by $\gamma$ and adding the result to \eqref{eq:r}, we obtain, for any $u\in\dom h$, 
\begin{equation}\label{eq:DesIneq}
 \gamma(\Psi(x^{+})-\Psi(u))\leq D_{h}(u,x)-D_{h}(u,x^{+})-\left(1-\frac{\gamma L_{f}}{\alpha}\right)D_{h}(x^{+},x).
 \end{equation}
 If $\gamma\in(0,\frac{\alpha}{L_f}]$, then the above yields 
 \begin{align*}
 \gamma(\Psi(x^{+})-\Psi(u))\leq D_{h}(u,x)-D_{h}(u,x^{+}),\quad u\in\dom h.
 \end{align*}
Setting $x=x^{k},x^{+}=x^{k+1},\gamma=\gamma_{k}$ and $u\in\dom h$, one can reformulate the previous display as 
 \begin{align*}
 \gamma_{k}\left(\Psi(x^{k+1})-\Psi(u)\right)\leq D_{h}(u,x^{k})-D_{h}(u,x^{k+1}),\quad u\in\dom h.
\end{align*}
If $u=x^{k}$, we readily see $\gamma_{k}(\Psi(x^{k+1})-\Psi(x^{k}))\leq -D_{h}(x^{k},x^{k+1})\leq 0$, i.e. the sequence of function values $\{\Psi(x^{k})\}_{k\in\N}$ is non-increasing. On the other hand, for a general reference point $u\in\dom h$, we also see that 
\begin{align*}
\sum_{k=0}^{N-1}\left(\Psi(x^{k+1})-\Psi(u)\right)&\leq \sum_{k=0}^{N-1}\frac{1}{\gamma_{k}}\left[D_{h}(u,x^{k})-D_{h}(u,x^{k+1})\right]\\
&=\frac{1}{\gamma_{0}}D_{h}(u,x^{0})-\frac{1}{\gamma_{N-1}}D_{h}(u,x^{N})+\sum_{k=0}^{N-2}\left(\frac{1}{\gamma_{k+1}}-\frac{1}{\gamma_{k}}\right)D_{h}(u,x^{k+1}).
\end{align*}
Assuming a constant step size policy $\gamma_{k}=\gamma$, this gives us 
\begin{align*}
\sum_{k=0}^{N-1}\left(\Psi(x^{k+1})-\Psi(u)\right)&\leq \frac{1}{\gamma}D_{h}(u,x^{0}).
\end{align*}
Define the function gap $s^{k}:=\Psi(x^{k})-\Psi(u)$, then $s^{k+1}-s^{k}=\Psi(x^{k+1})-\Psi(x^{k})\leq 0$, and therefore 
\begin{align*}
s^{N}&\leq\frac{1}{N}\sum_{k=0}^{N-1}s^{k+1}= \frac{1}{N}\sum_{k=0}^{N-1}[\Psi(x^{k+1})-\Psi(u)]\leq\frac{1}{N\gamma}D_{h}(u,x^{0})
\end{align*}
 for all $u\in\dom h$. As an attractive step size choice, we may take the greedy choice $\gamma=\frac{\alpha}{L_{f}}$. However, we need to know the Lipschitz constant of the gradient map of the smooth part $f$ of the minimization problem \eqref{eq:Opt} to make this an implementable solution strategy. Assuming that $\dom h$ is closed we get immediately from the estimate above the basic complexity result on the BPGM.
 \begin{proposition}
 If BPGM is run with the constant step size $\gamma_{k}=\frac{\alpha}{L_{f}}$ and $\dom h=\cl(\dom h)=\setX$, then for any $x^{\ast}\in\setX^{\ast}$, we have 
 \begin{equation}\label{eq:LCBPGM}
 \Psi(x^{k})-\Psi_{\min}(\setX)\leq\frac{L_{f}}{\alpha k}D_{h}(x^{\ast},x^{0}).
 \end{equation}
 \end{proposition}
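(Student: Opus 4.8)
The plan is to assemble the bound directly from the per-iterate descent estimate \eqref{eq:DesIneq}, which has already been obtained by combining the prox-optimality inequality \eqref{eq:r}, the descent lemma \eqref{eq:DLclassical}, and the strong-convexity lower bound \eqref{eq:Dlower}. No new machinery is needed: the whole argument is a telescoping sum followed by an averaging step, so I would simply specialize the estimate to the prescribed step size and chain the inequalities.

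First I would set the constant step size $\gamma=\alpha/L_{f}$ in \eqref{eq:DesIneq}. This is the crucial simplification: the coefficient $1-\gamma L_{f}/\alpha$ vanishes exactly, so the nonnegative term $D_{h}(x^{+},x)$ is annihilated and one is left with the clean one-step inequality
\[
\gamma\left(\Psi(x^{k+1})-\Psi(u)\right)\leq D_{h}(u,x^{k})-D_{h}(u,x^{k+1}),\qquad u\in\dom h.
\]
Taking $u=x^{k}$ (admissible since $\dom h$ is closed and contains the iterates) forces $\Psi(x^{k+1})\leq\Psi(x^{k})$, so the value sequence $\{\Psi(x^{k})\}_{k}$ is non-increasing; I will use this monotonicity in the last step.

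Next I would telescope. Summing the one-step inequality over $k=0,\dots,N-1$ collapses the right-hand side to $\frac{1}{\gamma}\bigl[D_{h}(u,x^{0})-D_{h}(u,x^{N})\bigr]\leq\frac{1}{\gamma}D_{h}(u,x^{0})$, where the final inequality uses $D_{h}\geq 0$. Dividing by $N$ bounds the averaged gap, and the monotonicity just established lets me replace the last gap by the average:
\[
\Psi(x^{N})-\Psi(u)\leq\frac{1}{N}\sum_{k=0}^{N-1}\left(\Psi(x^{k+1})-\Psi(u)\right)\leq\frac{1}{N\gamma}D_{h}(u,x^{0}).
\]

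Finally I would choose the reference point $u=x^{\ast}\in\setX^{\ast}$, for which $\Psi(x^{\ast})=\Psi_{\min}(\setX)$, and substitute $\gamma=\alpha/L_{f}$ to turn $\frac{1}{N\gamma}$ into $\frac{L_{f}}{\alpha N}$; renaming $N$ to $k$ yields \eqref{eq:LCBPGM}. The one place where genuine care is required — and the step I would flag as the only real obstacle — is the admissibility of this reference point: the entire chain is valid only for $u\in\dom h$, so the hypothesis $\dom h=\cl(\dom h)=\setX$ is not cosmetic. It is precisely what guarantees $x^{\ast}\in\setX^{\ast}\subseteq\setX=\dom h$, so that a minimizer is a legitimate reference and the telescoped estimate can be evaluated there.
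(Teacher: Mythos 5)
Your proposal is correct and follows essentially the same route as the paper: specialize the descent inequality \eqref{eq:DesIneq} to $\gamma=\alpha/L_{f}$, establish monotonicity of $\Psi(x^{k})$ by taking $u=x^{k}$, telescope, average, and evaluate at $u=x^{\ast}$, where the hypothesis $\dom h=\cl(\dom h)=\setX$ secures admissibility of the minimizer as reference point. The only cosmetic difference is ordering — the paper telescopes with general step sizes $\gamma_k$ first and then imposes the constant policy, whereas you fix $\gamma$ from the outset — which changes nothing of substance.
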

 This global sublinear rate of convergence for the Euclidean setting has been established in \cite{BecTebGradient09,Nes13}. Under additional assumption that the objective $\Psi$ is $\mu$-relatively strongly convex \cite{LuFreNes18} it is possible to obtain linear convergence rate of BPGM, i.e. $\Psi(x^{k})-\Psi_{\min}(\setX)\leq 2L_f\exp(-k\mu/L_f)D_{h}(x^{\ast},x^{0})$, see e.g. \cite{LuFreNes18,stonyakin2019gradient,stonyakin2020inexact}, where the authors of the latter two papers also analyze this kind of methods under inexact oracle and inexact Bregman proximal step.
 
 \subsubsection{Subgradient and Mirror Descent}
 
 In the previous subsections we focused on the setting of problem \eqref{eq:Opt} with smooth part $f$ and obtained for BPGM a convergence rate $O(1/k)$. The same method actually works for non-smooth convex optimization problems when $f$ has bounded subgradients. In this setting BPGM with a different choice of the step-size $\gamma$ is known as the \emph{Mirror Descent} (MD) method \cite{NY83}. A version of this method for convex composite non-smooth optimization was proposed in \cite{duchi2010composite}, and an overview of Subgradient/Mirror Descent type of methods for non-smooth problems can be found in \cite{Beck17,dvurechensky2020advances,lan2020first}. 
 
 The main difference between BPGM and MD is that one replaces the assumption that $\nabla f$ is Lipschitz continuous with the assumption that $f$ is subdifferentiable with bounded subgradients, i.e. $\| f'(x)\|_{\ast} \leq M_f$ for all $x \in \setX$ and $f'(x)\in\partial f(x)$. For a given sequence of step-sizes $(\gamma_{k})_{k}$ one defines the next test point as 
 \[
 x^{k+1}=\argmin_{u}\left\{\inner{\gamma_{k} f'(x^{k}),u-x^{k}}+\gamma_{k}r(u)+D_{h}(u,x^{k})\right\}=\scrP^{h}_{\gamma_{k}r}(x^{k},\gamma_{k}f'(x^{k})).
 \]
 A typical choice for the step size sequence is a monotonically decreasing policy like $\gamma_{k}\sim k^{-1/2}$. Under such a specification, the MD sequence $(x^{k})_{k}$ can be shown to converge with rate $O(1/\sqrt{k})$ to the solution, which is optimal in this setting. A proof of this result can be patterned via a suitable adaption of the arguments employed in our analysis of the Dual Averaging Method in Section \ref{sec:DA}. 

 \subsubsection{Potential Improvements due to relative smoothness}
\label{sec:NoLips} 
A key pillar of the complexity analysis of BPGM was the descent lemma \eqref{eq:DLclassical}, which in turn is a consequence of the assumed Lipschitz continuity of the gradient $\nabla f$. The very influential recent work by \cite{BauBolTeb16} introduced a very clever construction which allows one to relax this restrictive assumption.\footnote{Variations on the same theme can be found in \cite{LuFreNes18}.} The elegant observation made in \cite{BauBolTeb16} is that the Lipschitz-gradient-based descent lemma has the equivalent, but insightful, expression 
\begin{align*}
\left(\frac{L_{f}}{2}\norm{x}^{2}-f(x)\right)-\left(\frac{L_{f}}{2}\norm{u}^{2}-f(u)\right)\geq \inner{L_{f}u-\nabla f(u),x-u}\qquad\forall x,u\in\setV.
\end{align*}
This is just the gradient inequality for the convex function $x\mapsto \frac{L_{f}}{2}\norm{x}^{2}-f(x)$. Based on the general intuition we have gained while working with a general proximal setup, a very tempting and natural generalization is the following.
\begin{definition}[Relative Smoothness, \cite{BauBolTeb16}]
\label{def:RelSmo}
The function $f$ is smooth relative to the essentially smooth DGF $h\in\scrH_{0}(\setX)$ with $\setX=\cl(\dom h)$, if for any $x,u\in\setX^{\circ}$, there is a scalar $L_f^h\geq 0$ for which 
\begin{equation}\label{eq:DLNoLips}
f(u)\leq f(x)+\inner{\nabla f(x),u-x}+L_f^h D_{h}(u,x).
\end{equation}
\end{definition}
Structurally, relative smoothness implies a descent lemma where the squared Euclidean norm is replaced with a general Bregman divergence induced by an essentially smooth function $h\in\scrH_{0}(\setX)$. Rearranging terms, a very concise and elegant way of writing relative smoothness is that $D_{L_f^h h-f}(u,x)\geq 0$ on $\setX^{\circ}$, or that $L_f^h h- f$ is convex on $\setX^{\circ}$ if the latter is a convex set. Clearly, if $f$ and $h$ are twice continuously differentiable on $\setX^{\circ}$, the relative smoothness condition can be stated in terms of a positive semi-definitness condition on the set $\setX^{\circ}$ as 
\begin{equation}
L_f^h \nabla^{2}h(x)-\nabla^{2}f(x)\succeq 0\qquad\forall x\in\setX^{\circ}.
\end{equation}
Beside providing a non-Euclidean version of the descent lemma, the notion of relative smoothness allows us to rigorously apply gradient methods to problems whose smooth part admits no global Lipschitz continuous gradient. This gains relevance in solving various classes of inverse problems (see Section 5.2 in \cite{BauBolTeb16}), and optimal experimental design \cite{LuFreNes18}, a class of problems structurally equivalent to finding the minimum volume ellipsoid containing a list of vectors \cite{BoyVan04,Tod16}.\\

The complexity analysis of BPGM under a relative smoothness assumption on the pair $(f,h)$ proceeds analogous to the previous analysis. This NoLips algorithm, using the terminology coined by \cite{BauBolTeb16}, however involves a different condition number than the ratio $\frac{\alpha}{L_{f}}$ as in \eqref{eq:LCBPGM}. This is an important fact which makes this method potentially interesting even if the problem at hand admits a Lipschitz continuous gradient. The first important result is an extended version of the fundamental inequality \eqref{eq:DesIneq}, which reads as 
\begin{equation}
\gamma(\Psi(x^{+})-\Psi(u))\leq D_{h}(u,x)-D_{h}(u,x^{+})-(1-\gamma L_f^h)D_{h}(x^{+},x)\quad\forall u\in\dom h.
\end{equation}
The derivation of this inequality is analogous to inequality \eqref{eq:DesIneq}, replacing the Lipschitz-gradient-based descent inequality \eqref{eq:DLclassical} by the relative smoothness inequality \eqref{eq:DLNoLips} with parameter $L_f^h$. The continuation of the proof differs then in an important aspect. It relies on the introduction of the \emph{symmetry coefficient} of the \ac{DGF} $h$ as 
\begin{equation}
\nu(h):=\inf\left\{\frac{D_{h}(x,u)}{D_{h}(u,x)}\vert (x,u)\in\setX^{\circ}\times\setX^{\circ},x\neq u\right\}.
\end{equation}
The symmetry coefficient $\nu(h)$ is confined to the interval $[0,1]$, and $\nu(h)=1$ applies essentially only to the energy function $h(x)=\frac{1}{2}\norm{x}^{2}$. Choosing $\gamma=\frac{1+\nu}{2L_f^h},x^{+}=x^{k+1},x=x^{k}$ gives
\[
\gamma(\Psi(x^{k+1})-\Psi(u))\leq D_{h}(u,x^{k})-D_{h}(u,x^{k+1})-\frac{1-\nu}{2}D_{h}(x^{k+1},x^{k})\\
\]
Setting $u=x^{k}$ gives descent of the function value sequence $(\Psi(x^{k}))_{k\geq 0}$. Moreover, it immediately follows that 
\[
\Psi(x^{k})-\Psi(u)\leq\frac{2L_f^h}{1+\nu}\left(D_{h}(u,x^{k-1})-D_{h}(u,x^{k})\right)
\]
Summing from $k=1,2,\ldots,N$, the same argument as for the BPGM give sublinear convergence of NoLips 
\begin{equation}
\Psi(x^{N})-\Psi(u)\leq\frac{2L_f^h}{N(1+\nu)}D_{h}(u,x^{0}).
 \end{equation}
Comparing the constants in the complexity estimates of NoLips and BPGM we see that the relative efficiency of the two methods depends on the condition number ratio $\frac{2L_f^h/(1+\nu)}{L_{f}/\alpha}$. Hence, even if the objective function is globally Lipschitz smooth (i.e. admits a Lipschitz continuous gradient), exploiting the idea of relative smoothness might lead to superior performance of NoLips. 

To establish global convergence of the trajectory $(x^{k})_{k\in\N}$, additional "reciprocity" conditions on the Bregman divergence must be imposed. 
\begin{assumption}\label{ass:H}
The essentially smooth function $h\in\scrH_{0}(\setX)$ satisfies the Bregman reciprocity condition if the level sets $\{u\in\setX^{\circ}\vert D_{h}(u,x)\leq\beta\}$ are bounded for all $\beta\in\R$, and 
\begin{align*}
x^{k}\to x\in\setX^{\circ}\iff \lim_{k\to\infty}D_{h}(x,x^{k})=0. 
\end{align*}
\end{assumption}
This assumption is necessary, as in some settings Bregman reciprocity is violated. See Example 4.1 in \cite{DolTeb98} as a simple illustration. Under Bregman reciprocity, one can prove global convergence in the spirit of Opial's lemma \cite{Opi67}: 

\begin{theorem}[\cite{BauBolTeb16}, Theorem 2]
Let $(x^{k})_{k\in\N}$ be the sequence generated by BPGM with $\gamma\in(0,\frac{1+\nu(h)}{L_f^h})$. Assume $\setX =\cl(\dom h)=\dom h$ and that Assumption \ref{ass:H} holds additional to the standing hypothesis of this survey. Then, the sequence $(x^{k})_{k\in\N}$ converges to some solution $x^{\ast}\in\setX^{\ast}$.
\end{theorem}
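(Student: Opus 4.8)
The plan is to run an Opial-type argument built on two consequences of the relative-smoothness refinement of \eqref{eq:DesIneq}, namely
\[
\gamma\bigl(\Psi(x^{k+1})-\Psi(u)\bigr)\leq D_{h}(u,x^{k})-D_{h}(u,x^{k+1})-(1-\gamma L_f^h)D_{h}(x^{k+1},x^{k})
\]
valid for every $u\in\dom h$. First I would extract a descent-and-summability estimate by setting $u=x^{k}$: since $D_{h}(x^{k},x^{k})=0$ and, by definition of the symmetry coefficient, $D_{h}(x^{k},x^{k+1})\geq\nu(h)\,D_{h}(x^{k+1},x^{k})$, this collapses to
\[
\gamma\bigl(\Psi(x^{k+1})-\Psi(x^{k})\bigr)\leq-\bigl(1+\nu(h)-\gamma L_f^h\bigr)D_{h}(x^{k+1},x^{k}).
\]
Because $\gamma<\tfrac{1+\nu(h)}{L_f^h}$ the coefficient $1+\nu(h)-\gamma L_f^h$ is strictly positive, so $(\Psi(x^{k}))_{k}$ is non-increasing; telescoping and using $\Psi\geq\Psi_{\min}(\setX)>-\infty$ (Assumption \ref{ass:S}) yields $\sum_{k}D_{h}(x^{k+1},x^{k})<\infty$, and in particular $D_{h}(x^{k+1},x^{k})\to0$.

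Next I would establish quasi-Fej\'er monotonicity with respect to the solution set. Fixing any $x^{\ast}\in\setX^{\ast}$ and setting $u=x^{\ast}$ above, the left-hand side is nonnegative, giving
\[
D_{h}(x^{\ast},x^{k+1})\leq D_{h}(x^{\ast},x^{k})+(\gamma L_f^h-1)_{+}\,D_{h}(x^{k+1},x^{k}).
\]
The perturbation is summable by the previous step, so the standard quasi-Fej\'er lemma guarantees that $\bigl(D_{h}(x^{\ast},x^{k})\bigr)_{k}$ converges for every $x^{\ast}\in\setX^{\ast}$. The bounded-level-set part of Assumption \ref{ass:H} then confines $(x^{k})_{k}$ to a bounded subset of $\setX=\cl(\dom h)$, so it admits at least one limit point $\bar{x}\in\setX$, say $x^{k_{j}}\to\bar{x}$.

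The main obstacle is the third step: verifying that every such limit point $\bar{x}$ lies in $\setX^{\ast}$. For this I would pass to the limit in the optimality condition \eqref{eq:sub_prob_opt_cond} for the prox-step producing $x^{k+1}$ from $x^{k}$, which supplies $\xi^{k}\in\partial r(x^{k+1})$ with
\[
\gamma\nabla f(x^{k})+\gamma\xi^{k}+\nabla h(x^{k+1})-\nabla h(x^{k})\in-\NC_{\setX}(x^{k+1}).
\]
Since $D_{h}(x^{k+1},x^{k})\to0$, the reciprocity part of Assumption \ref{ass:H} forces $x^{k+1}-x^{k}\to0$, so along the subsequence both $x^{k_{j}}$ and $x^{k_{j}+1}$ tend to $\bar{x}$; continuity of $\nabla f$ together with closedness of the graphs of $\partial r$ and $\NC_{\setX}$ then lets me send $j\to\infty$ and recover $0\in\nabla f(\bar{x})+\partial r(\bar{x})+\NC_{\setX}(\bar{x})$, i.e.\ Fermat's rule \eqref{eq:Fermat}, whence $\bar{x}\in\setX^{\ast}$. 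The delicate point is controlling $\nabla h(x^{k+1})-\nabla h(x^{k})$, which need not vanish merely because $x^{k+1}-x^{k}\to0$ when $h$ is an essentially smooth barrier whose gradients blow up near $\bd(\setX)$; this is precisely where $D_{h}(x^{k+1},x^{k})\to0$ and the reciprocity hypothesis must be leveraged, and where most of the technical care is needed.

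Finally I would close the argument \`a la Opial \cite{Opi67}. Since $\bar{x}\in\setX^{\ast}$, the second step already shows that $\bigl(D_{h}(\bar{x},x^{k})\bigr)_{k}$ converges. Along the subsequence $x^{k_{j}}\to\bar{x}$ the reciprocity condition gives $D_{h}(\bar{x},x^{k_{j}})\to0$, so the full limit of this convergent sequence must be $0$; that is, $\lim_{k}D_{h}(\bar{x},x^{k})=0$. Applying reciprocity once more yields $x^{k}\to\bar{x}\in\setX^{\ast}$, establishing convergence of the entire sequence.
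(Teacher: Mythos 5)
Your overall architecture (descent plus summability of $D_h(x^{k+1},x^k)$, quasi-Fej\'er monotonicity of $D_h(x^{\ast},x^k)$, subsequential limits in $\setX^{\ast}$, then reciprocity to upgrade to convergence of the whole sequence) is exactly the Opial-style route the paper points to via \cite{BauBolTeb16}, and your Steps 1, 2 and 4 are sound. However, Step 3 --- the claim that every limit point lies in $\setX^{\ast}$ --- contains a genuine gap, which you flag but do not close. First, the reciprocity in Assumption \ref{ass:H} relates a \emph{fixed} point $x\in\setX^{\circ}$ to a sequence in the second argument; it says nothing about two simultaneously moving sequences, so $D_h(x^{k+1},x^k)\to 0$ does not, by that assumption, yield $x^{k+1}-x^{k}\to 0$. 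Second, and more seriously, even granting $x^{k_j},x^{k_j+1}\to\bar{x}$, passing to the limit in \eqref{eq:sub_prob_opt_cond} requires $\nabla h(x^{k_j+1})-\nabla h(x^{k_j})\to 0$, and this can simply fail when $h$ is essentially smooth and $\bar{x}\in\bd(\setX)$: take $h(t)=-\ln t$ on $\R_{+}$ and $x^{k}=1/k$; then $D_h(x^{k+1},x^{k})=\ln(1+1/k)-\tfrac{1}{k+1}\to 0$ and $x^{k+1}-x^{k}\to 0$, yet $\nabla h(x^{k+1})-\nabla h(x^{k})\equiv -1$. So no amount of "technical care" makes this limit passage go through in general; the argument breaks precisely at boundary limit points, which is the only interesting case.

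The repair is simpler and is what the cited proof actually does: the optimality conditions are not needed at all. Telescoping the master inequality (equivalently, invoking the $O(1/N)$ rate derived just before the theorem, which extends to any $\gamma\in(0,\tfrac{1+\nu(h)}{L_f^h})$ because the extra term $(\gamma L_f^h-1)_{+}\sum_k D_h(x^{k+1},x^k)$ is finite by your Step 1) gives $\Psi(x^{k})\to\Psi_{\min}(\setX)$, since $x^{\ast}\in\setX^{\ast}\subseteq\dom h$ makes $D_h(x^{\ast},x^{0})$ finite; lower semicontinuity of $\Psi$ then shows every limit point $\bar{x}$ satisfies $\Psi(\bar{x})\leq\liminf_{j}\Psi(x^{k_j})=\Psi_{\min}(\setX)$, i.e.\ $\bar{x}\in\setX^{\ast}$, with no reference to $\nabla h$ near the boundary. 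One further caveat on your Step 2: boundedness of $(x^{k})_{k}$ needs level-boundedness of $v\mapsto D_h(x^{\ast},v)$ (sequence in the \emph{second} slot), whereas Assumption \ref{ass:H} as printed bounds the level sets of $u\mapsto D_h(u,x)$ in the first slot; since here $h\in\scrH_{0}(\setX)$ has modulus zero, you cannot fall back on \eqref{eq:Dlower} either, so you should state explicitly that you are using the second-argument form of the level-set condition, which is the form actually required (and the one used in \cite{BauBolTeb16}).
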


 \subsection{Dual Averaging}
 \label{sec:DA}
An alternative method called Dual Averaging (DA) was proposed in \cite{Nes09} and, on the contrary, is a primal-dual method making alternating updates in the space of gradients and in the space of iterates. The extension to the convex non-smooth composite problem \eqref{eq:Opt} is due to \cite{Lin10}. Below we give a self-contained complexity analysis of this scheme for non-smooth optimization, i.e. under an assumption that $\norm{\nabla f(x)}_{\ast} \leq L_f$ for all $x\in \setX$ instead of the $L_f$-smoothness assumption in the previous subsections. 
 
We start the description and analysis of the Dual Averaging method with some preliminaries and assumptions. First, we change in this section the Lipschitz-smoothness assumption on $f$ to the following.
\begin{assumption}\label{ass:BoundedSubgrad}
The part $f$ in the problem \eqref{eq:Opt} has bounded subgradients, i.e. $\| f'(x)\|_{\ast} \leq M_f$ for all $x \in \setX$ and all $f'(x) \in \partial f(x)$.
\end{assumption}

\begin{assumption}\label{ass:Xcompact}
$\setX$ is a nonempty convex compact set. 
\end{assumption}
Let $h\in\scrH_{\alpha}(\setX)$ be a given \ac{DGF} for the feasible set $\setX\subseteq\setV$. 
 \begin{assumption}\label{ass:hbound}
 The \ac{DGF} $h\in\scrH_{\alpha}(\setX)$ is nonnegative on $\setX$ and upper bounded. Denote by 
 \begin{equation}
 \Omega_{h}(\setX) :=\max_{p\in\setX}h(p)-\min_{p\in\setX}h(p)
 \end{equation}
 the $h$-diameter of the set $\setX$.
 \end{assumption}
We emphasize that Assumption \ref{ass:hbound} implies that $\Omega_{h}(\setX)<\infty$.
 \begin{assumption}
 For all $x\in\setX$ we have $r(x)\geq 0$. 
 \end{assumption}
 
 Define the \emph{mirror map} 
 \begin{equation}
 Q_{\beta,\gamma}(y):=\argmax_{x\in\setX}\left\{\inner{y,x}-\beta h(x)-\gamma r(x)\right\}. 
 \end{equation}
 
Our terminology is motivated by the working of the Dual Averaging method. Given the current primal-dual pair $(x,y)$, DA performs a gradient step in the dual space $\setV^{\ast}$ to produce a new gradient feedback point $y^{+}=y-\lambda\nabla f(x)$, where $\lambda>0$ is a step size parameter. Taking this as a new signal, we update the primal state by applying the mirror map
$x^{+}=Q_{\beta,\gamma}(y^{+}).$ 
 \begin{algorithm}{\bf{The Dual Averaging method} (DA)}
\\
	{\bf Input:} pick $y^{0}=0,x^{0}=Q_{\beta_{0},\gamma_{0}}(0)$, nondecreasing learning sequence $(\beta_{k})_{k\in\N_{0}},(\gamma_{k})_{k\in\N_{0}}$ and non-increasing step-size sequence $(\lambda_{k})_{k\in\N_{0}}$\\
	{\bf General step:} For $k=0,1,\ldots$ do:\\
	\qquad dual update $y^{k+1}=y^{k}-\lambda_{k} f'(x^{k})$,\\
	\qquad set $x^{k+1}=Q_{\beta_{k+1},\gamma_{k+1}}(y^{k+1})$.
\end{algorithm}

\begin{remark}
If $r=0$ on $\setX$ we recover the dual averaging scheme of \cite{Nes09}. The definition of this method can be simplified to the following primal-dual updating scheme:
\begin{equation}\label{eq:DA}
\left\{\begin{array}{ll}
y^{k+1}=y^{k}-\lambda_{k} f'(x^{k}),y^{0}\text{ given,}\\
x^{k+1}=Q_{\beta_{k+1}}(y^{k+1})=\argmax_{x\in\setX}\{\inner{y^{k+1},x}-\beta_{k+1}h(x)\}.
\end{array}\right.
\end{equation}
We will revisit this scheme more thoroughly in Sections \ref{sec:DA-MD} and \ref{sec:dynamics}.
\end{remark}

We now assess the iteration complexity of DA, showing that it features the same order convergence rate $O(1/\sqrt{k})$ as BPGM and MD. 
 
 Fix an arbitrary anchor point $p\in\setX$ and define for given parameters $\beta,\gamma\in[0,\infty)$ the function 
 \begin{equation}
\label{eq:DA_proof_1}
 H_{\beta,\gamma}(y):=\max_{x\in\setX}\left\{\inner{y,x-p}-\beta h(x)-\gamma r(x)\right\}
 \end{equation}
 The mapping $x\mapsto \beta h(x)+\gamma r(x)$ is $\alpha_{\omega}:=(\alpha\beta+\gamma\mu)$-strongly convex. Applying Proposition \ref{prop:Rock}, the function $y\mapsto H_{\beta,\gamma}(y)$ is convex and continuously differentiable with 
 \begin{equation}\label{eq:H1}
 H_{\beta,\gamma}(y+g)\leq H_{\beta,\gamma}(y)+\inner{\nabla H_{\beta,\gamma}(y),g}+\frac{1}{2\alpha_{\omega}}\norm{g}^{2}_{\ast}\quad\forall y,g\in\setV^{\ast}.
 \end{equation}
 Moreover, if $\beta_{1}\geq\beta_{2}$ and $\gamma_{1}\geq \gamma_{2}$, then it is easy to see that 
 $$
 H_{\beta_{1},\gamma_{1}}(y)\leq H_{\beta_{2},\gamma_{2}}(y)\qquad\forall y\in\setV^{\ast}.
 $$
An important consequence of strong convexity is the following relation (see e.g. \cite{Nes05})
$$
Q_{\beta,\gamma}(y)-p=\nabla H_{\beta,\gamma}(y)\qquad \forall y\in\setV^{\ast}. 
$$
 To simplify the notation, let $H_{k}(y)\equiv H_{\beta_{k},\gamma_{k}}(y)$ for all $y\in\setV^{\ast}$. Thanks to the monotonicity in the parameters, we get through some elementary manipulations the relation
\begin{align*}
H_{k}(y^{k+1})\geq H_{k+1}(y^{k+1})+(\beta_{k+1}-\beta_{k})h(x^{k+1})+(\gamma_{k+1}-\gamma_{k})r(x^{k+1}).
\end{align*}
By Assumption \ref{ass:hbound}, we know that $h\geq 0$ on $\setX$. Indeed, this can be achieved by a simple shift of the graph of the function, if it is not satisfied from the beginning. Continuing under this nonnegativity assumption, and using $\beta_{k+1}\geq\beta_{k}$, we arrive at the estimate 
$$
H_{k}(y^{k+1})\geq H_{k+1}(y^{k+1})+(\gamma_{k+1}-\gamma_{k})r(x^{k+1}).
$$
Now, we impose some further structure on the choice of the step-sizes $\gamma_{k},\lambda_{k}$. Specifically, assume that 
\[
\gamma_{k+1}=\gamma_{k}+\lambda_{k},\gamma_{0}=0, 
\]
so that $\Lambda_{k}:=\sum_{i=0}^{k}\lambda_{i}\equiv \gamma_{k}$. From this, via equation \eqref{eq:H1}, we arrive at the upper bound
\begin{align*}
H_{k+1}(y^{k+1})+\lambda_{k}r(x^{k+1})\leq H_{k}(y^{k})-\lambda_{k}\inner{f'(x^{k}),x^{k}-p}+\frac{\lambda^{2}_{k}}{2\alpha_{k}}\norm{f'(x^{k})}_{\ast}^{2},
\end{align*}
where $\alpha_{k}=\beta_{k}\alpha+\gamma_{k}\mu$ is the strong concavity parameter of the maximization problem in \eqref{eq:DA_proof_1} at iteration $k$. Rearranging and using the convexity of the part $f$ gives 
$$
\lambda_{k}[f(x^{k})-f(p)+r(x^{k+1})]\leq H_{k}(y^{k})-H_{k+1}(y^{k+1})+\frac{\lambda^{2}_{k}}{2\alpha_{k}}\norm{f'(x^{k})}_{\ast}^{2}. 
$$
Summing over $k=0,1,\ldots,N$, this gives 
$$
\sum_{k=0}^{N}\lambda_{k}[f(x^{k})-f(p)+r(x^{k+1})]\leq H_{0}(y^{0})-H_{N+1}(y^{N+1})+\sum_{k=0}^{N}\frac{\lambda^{2}_{k}}{2\alpha_{k}}\norm{f'(x^{k})}_{\ast}^{2}. 
$$
Since $h,r\geq 0$ on $\setX$, it is clear that $H_{0}(0)\leq 0$. Moreover, $H_{N+1}(y^{N+1})\geq -\beta_{N+1} h(p)-\gamma_{N+1}r(p)$, where $p\in\setX$ is the chosen anchor point. This, using the bounded subgradients assumption, leads to the weaker estimate 
\begin{align*}
\sum_{k=0}^{N}\lambda_{k}[\Psi(x^{k})-\Psi(p)+r(x^{k+1})+r(p)-r(x^{k})]\leq \beta_{N+1}h(p)+\Lambda_{N}r(p)+\sum_{k=0}^{N}\frac{\lambda^{2}_{k}M^{2}_{f}}{2\alpha_{k}}. 
\end{align*}
Jensen's inequality applied to the ergodic average 
$$
\bar{x}_{N}:=\frac{1}{\Lambda_{N}}\sum_{k=0}^{N}\lambda_{k}x^{k}
$$
gives us further 
$$
\Lambda_{N}[\Psi(\bar{x}^{N})-\Psi(p)]\leq \beta_{N+1}\Omega_{h}(\setX)+\lambda_{0}r(x^{0})+\sum_{k=0}^{N}\frac{\lambda^{2}_{k}M^{2}_{f}}{2\alpha_{k}}.
$$
Let us now make the concrete choice of parameters 
\begin{align*}
\beta_{k}=\beta>0 \text{ and }\lambda_{k}=\frac{1}{\sqrt{k+1}}\qquad\forall k\geq 0.
\end{align*} 
Then, for all $k\geq 1$, classical Calculus arguments (see e.g. \cite{Beck17}, Lemma 8.26) yield the bounds 
\begin{align*}
&\gamma_{k}=\Lambda_{k}\geq\sqrt{k+1},\alpha_{k}=\alpha+\Lambda_{k}\mu\geq\alpha+\frac{\mu}{\sqrt{k+1}},\\
&\frac{\lambda^{2}_{k}}{\alpha_{k}}\leq \frac{1}{\alpha(k+1)},\text{ and }\sum_{k=0}^{N}\frac{\lambda^{2}_{k}}{2\alpha_{k}}\leq \frac{1+\log(N+1)}{2\alpha}.
\end{align*}
to get the $O(\log(N)/\sqrt{N})$ bound
\begin{equation}\label{eq:complexityDA}
\Psi(\bar{x}_{N})-\Psi_{\min}(\setX)\leq\frac{\beta\Omega_{h}(\setX)+r(x^{0})+\frac{M_{f}^{2}}{2\alpha}(1+\log(N+1))}{\sqrt{N+1}}.
\end{equation}

\paragraph{The effectiveness of non-Euclidean setups}
With the help of the explicit rate estimate \eqref{eq:complexityDA} we are now in the position to evaluate the potential efficiency gains we can make by adopting the non-Euclidean framework. To do so, assume that we are interested in estimating the accuracy obtained when running DA over an a-priori fixed window $\{0,\ldots,N\}$. If the optimizer commits at the beginning to this decision, then a more efficient step size strategy can be constructed by setting 
\begin{equation}
\beta_{k}=\beta,\text{ and }\lambda_{k}=\frac{\sqrt{2\alpha\beta\Omega_{h}(\setX)}}{\sqrt{N+1}M_{f}}. 
\end{equation}
By doing this, we obtain $\Lambda_{N}=\sqrt{2\alpha\beta\Omega_{h}(\setX)(N+1)}$, and therefore \eqref{eq:complexityDA} reads as 
\begin{align*}
\Psi(\bar{x}^{N})-\Psi(p)\leq\frac{(\beta+1)\Omega_{h}(\setX)+r(x^{0})}{\sqrt{2\alpha\beta(N+1)}\sqrt{\Omega_{h}(\setX)}}.
\end{align*}
Assuming that $r(x^{0})=0$, the complexity estimate becomes $O(\sqrt{\Omega_{h}(\setX)/N}M_{f})$. We now illustrate how the factor $\sqrt{\Omega_{h}(\setX)}M_{f}$ depends on the choice of the Bregman setup. 

\begin{example}
Assume that $\setX=\{x\in\Rn_{+}\vert \sum_{i=1}^{n}x_{i}=1\}$. We investigate the complexity of DA under two different potentially interesting Bregman proximal setups. 
\begin{enumerate}
\item Endow the set $\setX$ with the $\ell_{2}$ norm $\norm{\cdot}=\norm{\cdot}_{2}$. Then $\norm{\cdot}_{\ast}=\norm{\cdot}_{2}$ and $M_{f}\equiv M_{f}^{(2)}=\sup_{x\in\setX}\norm{f'(x)}_{2}$. It can be easily computed that $\Omega_{h}(\setX)=\frac{n-1}{2n}\approx 1/2$ for $n\to\infty$. 
\item Enow the set $\setX$ with the $\ell_{1}$ norm $\norm{\cdot}=\norm{\cdot}_{1}$. We then have $\norm{\cdot}_{\ast}=\norm{\cdot}_{\infty}$. Set $M_{f}\equiv M_{f}^{\infty}=\sup_{x\in\setX}\norm{f'(x)}_{\infty}$. As \ac{DGF} let us consider $h(x)=\sum_{i=1}^{n}x_{i}\ln(x_{i})$. Then, $\Omega_{h}(\setX)=\ln(n)$. 
\end{enumerate}
Since $\norm{a}_{\infty}\leq \norm{a}_{2}\leq \sqrt{n}\norm{a}_{\infty}$, we see that $1\leq \frac{M_{f}^{2}}{M_{f}^{\infty}}\leq \sqrt{n}$, and hence 
\begin{align*}
\frac{\sqrt{(n-1)/(2n)}}{\sqrt{\ln(n)}}\leq \frac{\sqrt{(n-1)/(2n)}}{\sqrt{\ln(n)}}\frac{M_{f}^{(2)}}{M_{f}^{\infty}}\leq \frac{\sqrt{(n-1)/(2n)}}{\sqrt{\ln(n)}}\sqrt{n}.
\end{align*}
Thus, in particular for $n$ large, it can be seen that the $\ell_{1}$-setup is never worse than the $\ell_{2}$-setup, and there can be strong reasons to prefer the non-Euclidean $\ell_{1}$ setup over the $\ell_{2}$ setup. 
\end{example}

 \subsubsection{On the connection between Dual Averaging and Mirror Descent}
 \label{sec:DA-MD}
 A deep and important connection between the Dual Averaging and Mirror Descent algorithms for convex non-smooth optimization has been observed in \cite{BecTeb03}. To illustrate this link, let us particularize our model problem \eqref{eq:Opt} to the constrained convex programming case where $r(x)=0$ on $\setX$. In this case, the dual averaging scheme produces primal-dual iterates via the updates \eqref{eq:DA}. To relate these iterates to BPGM, we assume that $\dom h\subset\setX$ and $h$ is essentially smooth in the sense of Definition \ref{def:Legendre}.

Let us recall that $h$ is essentially smooth if and only if its Fenchel conjugate $h^{\ast}$ is essentially smooth. Moreover, $\nabla h:\Int(\dom h)\to\Int(\dom h^{\ast})$ is a bijection with 
 \begin{equation}\label{eq:hLegendre}
 (\nabla h)^{-1}=\nabla h^{\ast}\text{ and }\nabla h^{\ast}(\nabla h(x))=\inner{x,\nabla h(x)}-h(x)
 \end{equation}
Taking $\setX=\cl(\dom h)$, it follows 
\begin{align*}
\dom \partial h=\rint(\dom h)=\rint(\setX)\text{ with }\partial h(x)=\{\nabla h(x)\}\quad\forall x\in\rint(\setX). 
\end{align*}

Assuming that the penalty function $h$ is of Legendre type, the primal projection step is seen to be the regularized maximization step
\begin{align*}
x^{k}=\argmax_{u\in\setX}\{\inner{y^{k},u}-\beta_{k}h(u)\}\iff y^{k}=\beta_{k}\nabla h(x^{k}).
\end{align*}
Using the definition of the dual trajectory, we see that for all $k\geq 0$ the primal-dual relation obeys: 
\begin{align*}
0=\lambda_{k}\nabla f(x^{k})+\beta_{k+1}\nabla h(x^{k+1})-\beta_{k}\nabla h(x^{k}).
\end{align*}
Assuming that $\beta_{k}\equiv 1$, this implies  
\begin{align*}
x^{k+1}\in\argmin_{u\in\setX}\{\inner{\lambda_{k}\nabla f(x^{k}),u-x^{k}}+D_{h}(u,x^{k})\}=\scrP_{0}(x^{k},\lambda_{k}\nabla f(x^{k})). 
\end{align*}
We have thus shown that DA and BPGM/MD agree if all parameters and initial conditions are chosen in the same way. 
 
 \subsubsection{Links to continuous-time dynamical systems}
 \label{sec:dynamics}
The connection between numerical algorithms and continuous-time dynamical systems for optimization is classical and well-documented in the literature (see e.g.  \cite{HelMoo96} for a textbook reference). Here we describe an interesting link between dual averaging and a class or Riemannian gradient flows originally introduced in \cite{ABB04,AttBolRedTeb04,AttTeb04} and further studied in \cite{BolTeb03}. A complexity analysis of discretized versions of these gradient flows has recently been obtained in \cite{HBA-Linear}. Our point of departure is the following continuous-time dynamical system based on dual averaging, which has been introduced in \cite{MerSta18} in the context of convex programming and in \cite{MerStaJOTA18} for general monotone variational inequality problems. The main ingredient of this dynamical system is a pair of primal-dual trajectories $(x(t),y(t))_{t\geq 0}$ evolving in continuous time according to the differential-projection system
\begin{equation}\label{eq:SDA}
\left\{\begin{array}{l}
y'(t):=\frac{\dif y(t)}{\dif t}=-\lambda(t)\nabla f(x(t)),\\
x(t)=Q_{1}(\eta(t)y(t))=: Q(\eta(t)y(t)).
\end{array}\right.
\end{equation}
To relate this scheme formally to its discrete-time counterpart \eqref{eq:DA}, let us perform an Euler discretization of the dual trajectory by $y^{k}-y^{k-1}=-\lambda_{k}\nabla f(x^{k})$, and project the resulting point to the primal space by applying the mirror map $Q(\frac{1}{\beta_{k+1}}y^{k+1})$, where $\beta_{k+1}^{-1}$ is the discrete-time learning rate appropriately sampled from the function $\eta(t)$. As in Section \ref{sec:DA-MD}, let us assume that the mirror map is generated by a Legendre function $h$, so that 
$$
x(t)=\nabla h^{\ast}(\eta(t)y(t)).
$$
Let us further assume that $h$ is twice continuously differentiable and $\eta(t)\equiv 1$. Differentiating the previous equation with respect to time $t$ gives 
$$
x'(t)=\nabla^{2}h^{\ast}(y(t))y'(t)=-\lambda(t)\nabla^{2}h^{\ast}(y(t))\nabla f(x(t)).
$$
To make headway, recall the basic properties of Legendre function saying $\nabla h^{\ast}(\nabla h(x))=x$ for all $x\in\Int\dom h$ (cf. \eqref{eq:hLegendre}). Differentiating implicitly this identity, we obtain $\nabla^{2}h^{\ast}(\nabla h(x)))\equiv\Id$, or
\begin{equation}\label{eq:Hinverse}
\nabla^{2}h^{\ast}(\nabla h(x))=[\nabla^{2}h(x)]^{-1}=: H(x)^{-1}. 
\end{equation}
As in Section \ref{sec:DA-MD}, it holds true that $y(t)=\nabla h(x(t))$ for all $t\geq 0$, we therefore obtain the interesting characterization of the primal trajectory as 
$$
x'(t)=-\lambda(t)H(x(t))^{-1}\nabla f(x(t)).
$$
If $\setX$ is a smooth manifold, we can define a Riemannian metric 
$$
g_{x}(u,v):=\inner{H(x)u,v}\qquad \forall (x,u,v)\in\setX^{\circ}\times\setV\times\setV.
$$
The gradient of a smooth function $\phi$ with respect to the metric $g$ is then given by $\nabla_{g}\phi(x)=H(x)^{-1}\nabla \phi(x)$. Hence, the continuous-time version of the dual averaging method gives rise the class of primal \emph{Riemannian-Hessian gradient flows} 
\begin{equation}\label{eq:HG}
x'(t)+\lambda(t)\nabla_{g}f(x(t))=0,\quad x(0)\in\setX^{\circ}.
\end{equation}
This class of continuous-time dynamical systems gave rise to a vigorous literature in connection with Nesterov's optimal method, which we will thoroughly discuss in Section \ref{sec:accelerated}. As an appetizer, consider the system of differential equations 
\begin{equation}\label{eq:AMD}
y'(t)=-\lambda(t)\nabla f(x(t)),\quad x'(t)=\gamma(t)[Q(\eta(t)y(t))-x(t)].
\end{equation}
Suppose that in \eqref{eq:AMD} we take $Q(y)=y,\eta(t)=1$. This corresponds 

to the Legendre function $h(x)=\frac{1}{2}\norm{x}^{2}_{2}+\delta_{\setX}(x)$ for a given closed convex set $\setX$. Under this specification, the dynamical system \eqref{eq:AMD} becomes 
$$
y'(t)=-\lambda(t)\nabla f(x(t)),\quad x'(t)=\gamma(t)[y(t)-x(t)]. 
$$
Combining the primal and the dual trajectory, we easily derive a purely primal second-order in time dynamical system given by
\begin{align*}
x''(t) -x'(t)\left(\frac{\gamma(t)^{2}-\gamma'(t)}{\gamma(t)}\right)+\lambda(t)\nabla f(x(t))=0.
\end{align*}
Setting $\gamma(t)=\beta/t$ and $\lambda(t)=1/\gamma(t)$ and rearranging gives 
\begin{align*}
x''(t)+\frac{\beta+1}{t}x'(t)+\nabla f(x(t))=0, 
\end{align*}
which corresponds to the continuous-time version of the Heavy-ball method of Polyak \cite{Pol64}. For $\beta=2$ this gives the continuous-time formulation of Nesterov's accelerated scheme, as shown by \cite{SuBoyCan16}.  

More generally, suppose that $h$ is a twice continuously differentiable Legendre function and $\eta(t)\equiv 1$. Then a direct calculation shows that 
\begin{align*}
x''(t)+\left(\gamma(t)-\frac{\gamma'(t)}{\gamma(t)}\right)x'(t)+\gamma(t)\lambda(t)(\nabla^{2}h)^{-1}(Q(y(t)))\nabla f(x(t))=0. 
\end{align*}
Using the identity \eqref{eq:Hinverse}, as well as $\frac{x'(t)}{\gamma(t)}+x(t)=\nabla h^{\ast}(y(t))$, it follows that 
\begin{align*}
\nabla^{2}h\left(x(t)+\frac{x'(t)}{\gamma(t)}\right)\left(\frac{x''(t)}{\gamma(t)}+\left(1-\frac{\gamma'(t)}{\gamma(t)^2}\right)x'(t)\right)=-\lambda(t)\nabla f(x(t))
\iff  \frac{\dif}{\dif t}\nabla h\left(x(t)+\frac{x'(t)}{\gamma(t)}\right)=-\lambda(t)\nabla f(x(t)).
\end{align*}
This shows that for $\eta\equiv 1$, the dynamic coincides with the Lagrangian family of second-order systems constructed in \cite{Wib16}. These ideas are now investigated heavily when combined with numerical discretization schemes for dynamical system with the hope to get insights how to construct new and more efficient algorithmic formulation of gradient-methods. This literature grew quite fastly over the last years, and we mention \cite{AttChbPeyRed18,Bah:2019aa,Shi:2019aa,Attouch:2020aa}.

\section{The Proximal Method of Multipliers and ADMM}
\label{sec:ADMM}
%
In this section we turn our attention to a classical toolbox for solving linearly constrained optimization problems building on the classical idea of the celebrated \emph{method of multipliers}. An extremely powerful proponent of this class of algorithms is the \emph{Alternating Direction Method of Multipliers} (ADMM), which has received enormous interest from different directions, including PDEs \cite{AttRedSou07,AttCabFraPey11}, mixed-integer programming \cite{AhmFeiSun17}, optimal control \cite{FarJovLin12} and signal processing \cite{Yua12,YanZha11}. The very influential monograph \cite{BoyChuEckADMM11} contains over 180 references, reflecting the deep impact of alternating methods on optimization theory and its applications. Following the general spirit of this survey, we introduce alternating direction methods in a proximal framework, as pioneered by Rockafellar \cite{Roc76, Roc76AL}, and due to \cite{SheTeb14}. See also \cite{BanBotCse16} for some further important elaborations. 

To set the stage, consider the composite convex optimization problem \eqref{eq:Opt}, in its special form \eqref{eq:structured}. Hence, we are interested in minimizing the composite convex function 
$$
\Psi(x)=g(\bA x)+r(x),
$$
for a given bounded linear operator $\bA$. To streamline the presentation, we directly assume in this section that $\setV=\setV^{\ast}=\Rn$, and the underlying metric structure is generated by the Euclidean norm $\norm{a}\equiv \norm{a}_{2}=\inner{a,a}^{1/2}=\left(\sum_{i=1}^{n}a_{i}\right)^{1/2}$. 
Introducing the auxiliary variable $z=\bA x$, this problem can be equivalently written as 
\begin{equation}\label{eq:P-ADMM}
\inf\{\Phi(x,z)=g(z)+r(x)\vert \bA x-z=0,x\in\setX,z\in\setZ\},
\end{equation}
where $\setX=\Rn$ and $\setZ=\R^{m}$. We will call this the \emph{primal} problem. By Fenchel-Rockafellar duality \cite{BauCom16}, the \emph{dual problem} to \eqref{eq:P-ADMM} is 
\begin{equation}\label{eq:D-ADMM}
\min_{y} g^{\ast}(y)+r^{\ast}(-\bA^{\top}y).
\end{equation}

The Lagrangian associated to \eqref{eq:P-ADMM} is
\begin{equation}
L(x,z,y)=g(z)+r(x)+\inner{y,\bA x-z},
\end{equation}
where $y\in\R^{m}$ is the Lagrange multiplier associated with the linear constraint. 

\begin{assumption}
The Lagrangian $L$ associated to problem \eqref{eq:P-ADMM} has a saddle point, i.e. there exists $(x^{\ast},z^{\ast},y^{\ast})$ such that 
\begin{equation}
L(x^{\ast},z^{\ast},y)\leq L(x^{\ast},z^{\ast},y^{\ast})\leq L(x,z,y^{\ast})\qquad\forall (x,z,y)\in\setX\times\setZ\times\R^{m}.
\end{equation}
\end{assumption}

A key actor in alternating direction methods is the \emph{augmented Lagrangian} defined for some $c>0$ as 
\begin{equation}\label{eq:AL}
L_{c}(x,z,y)=r(x)+g(z)+\inner{y,Ax-z}+\frac{c}{2}\norm{Ax-z}_{2}^{2}.
\end{equation}

 \begin{algorithm}{\bf{The Alternating Direction of Method of Multipliers} (ADMM)}
\\
	{\bf Input:} pick $(z^{0},y^{0})\in\setZ\times\R^{m}$ and penalty parameter $c>0$;\\
	{\bf General step:} For $k=0,1,\ldots$ do:
	\begin{align}  
  x^{k+1}&= \argmin_{x\in\setX}\{r(x)+\frac{c}{2}\norm{\bA x-z^{k}+\frac{1}{c}y^{k}}_{2}^{2}\}\\
  z^{k+1}&= \argmin_{z\in\setZ}\{g(z)+\frac{c}{2}\norm{\bA x^{k+1}-z+\frac{1}{c}y^{k}}^{2}_{2}\}\\
  y^{k+1}&=y^{k}+c(\bA x^{k+1}-z^{k+1}).
  \end{align}	
\end{algorithm}

ADMM updates the decision variables in a sequential manner, and thus is not capable of featuring parallel updates which are often required in large-scale distributed optimization problems. In the context of the AC optimal power flow problem in electric power grid optimization \cite{Sun13} provide such a modification of ADMM. Furthermore, the ADMM can be extended to consider formulations with general linear constraints of the form $\bA_1 x+\bA_2 z=b$. For ease of exposition we stick to the simplified problem formulation above.    

\subsection{The Douglas-Rachford algorithm and ADMM}
\label{sec:DR}
The Douglas-Rachford (DR) algorithm is a fundamental method to solve general monotone inclusion problems where the task is to find zeros of the sum of two maximally monotone operators (see \cite{BauCom16} and \cite{AusTebBook06}). To keep the focus on convex programming, we introduce this method for solving the dual problem \eqref{eq:D-ADMM}. To that end, let us define the matrix $\bK=-\bA^{\top}$, so that our aim is to solve the convex programming problem
\begin{equation}\label{eq:DR}
\min_{z}g^{\ast}(z)+r^{\ast}(\bK z).
\end{equation}
Any solution $\bar{z}\in \dom(r^{\ast})$ satisfies the monotone inclusion 
\begin{equation}
0\in \bK^{\top}\partial r^{\ast}(\bK \bar{z})+\partial g^{\ast}(\bar{z}).
\end{equation}
The DR algorithm aims to determine such a point $\bar{z}$ by iteratively constructing a sequence $\{(u^{k},v^{k},y^{k}),k\geq 0\}$ determined by 
\begin{align*}
v^{k+1}&=(\Id+c\bK^{\top}\circ\partial r^{\ast}\circ\bK)^{-1}(2y^{k}-u^{k}),\\
u^{k+1}&=v^{k+1}+u^{k}-y^{k},\\
y^{k+1}&=(\Id+c \partial g^{\ast})^{-1}(u^{k+1}).
\end{align*}
To bring this into an equivalent form, let us focus on the definition of the $y^{k+1}$ update, which reads as the inclusion 
$$
0\in\frac{1}{c}(y^{k+1}-u^{k+1})+\partial g^{\ast}(y^{k+1}).
$$
This is clearly recognizable as the first-order optimality condition of the $\min_{y}\{g^{\ast}(y)+\frac{1}{2c}\norm{y-u^{k+1}}^{2}_{2}\}$. 
Therefore, we can rewrite the above iteration in terms of convex optimization subroutines as: 
\begin{align}
v^{k+1}&=\argmin_{v}\{r^{\ast}(\bK v)+\frac{1}{2c}\norm{v-(2 y^{k}-u^{k})}_{2}^{2}\}\label{eq:DRv},\\
u^{k+1}&=v^{k+1}+u^{k}-w^{k}\label{eq:DRu},\\
y^{k+1}&=\argmin_{y}\{g^{\ast}(y)+\frac{1}{2c}\norm{y-u^{k+1}}_{2}^{2}\}.\label{eq:DRy}
\end{align}
Via Fenchel-Rockafellar duality, the dual problem to \eqref{eq:DRv} reads as 
\begin{align*}
x^{k+1}=\argmin_{x}\{r(x)+\frac{c}{2}\norm{\bA x+\frac{1}{c}(2 y^{k}-u^{k})}_{2}^{2}\},
\end{align*}
where the coupling between the primal and the dual variables is 
\[
u^{k+1}=y^{k}+c\bA x^{k+1}.
\]
The dual to step \eqref{eq:DRy} reads as
\begin{align*}
z^{k+1}=\argmin_{z}\{g(z)+\frac{c}{2}\norm{z-\frac{1}{c}u^{k+1}}_{2}^{2}\}.
\end{align*}
The coupling between primal and dual variables reads as 
\[
y^{k+1}=u^{k+1}-c z^{k+1}.
\]
Combining all these relations, we can write the dual minimization problem as 
\begin{align*}
x^{k+1}&=\argmin_{x}\{r(x)+\frac{c}{2}\norm{\bA x-z^{k}+\frac{1}{c}y^{k}}_{2}^{2}\},\\
z^{k+1}&=\argmin_{z}\{g(z)+\frac{c}{2}\norm{\bA x^{k+1}-z+\frac{1}{c}y^{k}}_{2}^{2}\},\\
y^{k+1}&=y^{k}+c(\bA x^{k+1}-z^{k+1})
\end{align*}
which is just the standard ADMM. By this we have recovered a classical result on connection between the DR and ADMM algorithms due to \cite{Gab83} and \cite{EckBer92}.


\subsection{Proximal Variant of ADMM}
\label{sec:ProxADMM}

One of the limitations of the ADMM comes from the presence of the term $\bA x$ in the update of $x^{k+1}$. The presence of this factor makes it impossible to implement the algorithm in parallel, which makes it slightly unattractive for large-scale problems in distributed optimization. Moreover, due to the result of \cite{chen2016direct} the convergence of ADMM for general linear constraints does not generalize to more than two blocks. Leaving parallelization issues aside, Shefi and Teboulle \cite{SheTeb14} proposed an interesting extension of the ADMM by adding further quadratic penalty terms, which adds stability to the algorithm, and as well allows us to give a unified perspective of Lagrangian methods and prove global convergence results. 

Given some point $(x^{k},z^{k},y^{k})\in\setX\times\setZ\times\R^{m}$ and two positive definite matrices $\bM_{1},\bM_{2}$, we are ready to define the new ingredient of the method. 
\begin{definition}
The proximal augmented Lagrangian of \eqref{eq:P-ADMM} is 
\begin{equation}
P_{k}(x,z,y)=L_{c}(x,z,y)+q_{k}(x,z)
\end{equation}
where 
\begin{equation}
q_{k}(x,z)=\frac{1}{2}\norm{x-x^{k}}^{2}_{\bM_{1}}+\frac{1}{2}\norm{z-z^{k}}_{\bM_{2}}^{2}.
\end{equation}
\end{definition}
Here, $\norm{u}_{\bM}^{2}=\inner{u,\bM u}$ is the semi-norm induced by $\bM$, which is a norm if $\bM$ is positive definite. 

 \begin{algorithm}{\bf{The Alternating Direction proximal Method of Multipliers} (AD-PMM)}
\\
	{\bf Input:} pick $(x^{0},z^{0},y^{0})\in\setX\times \setZ\times\R^{m}$ and penalty parameter $c>0$;\\
	{\bf General step:} For $k=0,1,\ldots$ do:
	\begin{align}  
  x^{k+1}&= \argmin_{x\in\setX}\{r(x)+\frac{c}{2}\norm{\bA x-z^{k}+\frac{1}{c}y^{k}}_{2}^{2}+\frac{1}{2}\norm{x-x^{k}}^{2}_{\bM_{1}}\}
\label{eq:ADPMM-x}  \\
  z^{k+1}&= \argmin_{z\in\setZ}\{g(z)+\frac{c}{2}\norm{\bA x^{k+1}-z+\frac{1}{c}y^{k}}_{2}^{2}+\frac{1}{2}\norm{z-z^{k}}_{\bM_{2}}^{2}\} \label{eq:ADPMM-z}\\
  y^{k+1}&=y^{k}+c(\bA x^{k+1}-z^{k+1})\label{eq:ADPMM-y}.
  \end{align}	
\end{algorithm}

We give a brief analysis of the complexity of AD-PMM in the special case of problem \eqref{eq:P-ADMM}. Recall that a standing hypothesis in this survey is that the smooth part $f$ of the composite convex programming problem \eqref{eq:Opt} admits a Lipschitz continuous gradient. Since $f(x)=g(\bA x)$, the Lipschitz constant of $\nabla f$ is determined by a corresponding Lipschitz assumption on $\nabla g$, with the constant henceforth denoted as $L_{g}$, and a bound on spectrum of the matrix $\bA$. To highlight the primal-dual nature of the algorithm, a key element in the complexity analysis is the bifunction 
\[
S(x,y)=r(x)-g^{\ast}(y)+\inner{y,\bA x}=L(x,0,y).
\]
Our derivation of an iteration complexity estimate of AD-PMM proceeds in two steps. First, we present an interesting ``Meta-Theorem'', due to \cite{SheTeb14}, and reprinted here as Proposition \ref{prop:meta}. It gives a general convergence guarantees for any primal-dual algorithms satisfying a specific per-iteration bound. We then apply this general result to AD-PMM, by verifying that this scheme actually satisfies these mentioned per-iteration bounds.

We start with an auxiliary technical fact.
\begin{lemma}\label{lem:bound}
Let $h:\Rn\to\R$ be a proper convex and $L_{h}$-Lipschitz continuous. Then, for any $\xi\in\Rn$ we have 
\begin{equation}
h(\xi)\leq \max\{\inner{\xi,u}-h^{\ast}(u):\norm{u}_{2}\leq L_{h}\}.
\end{equation}
\end{lemma}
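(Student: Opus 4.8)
The plan is to invoke Fenchel--Moreau biconjugation together with the elementary fact that Lipschitz continuity forces the domain of $h^{\ast}$ to lie inside the (Euclidean) ball of radius $L_{h}$. Since $h$ is finite-valued and convex on all of $\Rn$, it is in particular proper, lower semi-continuous and convex, so the Fenchel--Moreau theorem gives $h=h^{\ast\ast}$, that is
\begin{equation*}
h(\xi)=\sup_{u\in\Rn}\{\inner{\xi,u}-h^{\ast}(u)\}\qquad\forall\xi\in\Rn.
\end{equation*}
The entire content of the lemma is therefore to show that in this supremum one may restrict attention to those $u$ with $\norm{u}_{2}\leq L_{h}$, and that the resulting supremum is in fact attained.

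The key step is to prove the domain containment $\dom h^{\ast}\subseteq\{u\in\Rn:\norm{u}_{2}\leq L_{h}\}$. To this end I would fix $u$ with $\norm{u}_{2}>L_{h}$, set $v=u/\norm{u}_{2}$, and evaluate the defining supremum of $h^{\ast}$ along the ray $x=tv$, $t\geq 0$. Using the Lipschitz estimate $h(tv)\leq h(0)+L_{h}t$ and the fact that the dual of the Euclidean norm is itself (so $\inner{u,v}=\norm{u}_{2}$), this yields
\begin{equation*}
h^{\ast}(u)\geq\inner{u,tv}-h(tv)\geq t\bigl(\norm{u}_{2}-L_{h}\bigr)-h(0)\xrightarrow[t\to\infty]{}+\infty,
\end{equation*}
hence $h^{\ast}(u)=+\infty$. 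Consequently every $u$ outside the ball contributes $-\infty$ to the biconjugate supremum and may be discarded, giving $h(\xi)=\sup_{\norm{u}_{2}\leq L_{h}}\{\inner{\xi,u}-h^{\ast}(u)\}$.

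Finally I would upgrade the supremum to a maximum by a compactness argument: the constraint set $\{u:\norm{u}_{2}\leq L_{h}\}$ is compact, and since $h^{\ast}$ is lower semi-continuous (being a conjugate), the objective $u\mapsto\inner{\xi,u}-h^{\ast}(u)$ is upper semi-continuous and therefore attains its maximum on this compact set. This delivers exactly the claimed relation; note that the argument in fact produces equality, so the stated one-sided inequality follows a fortiori.

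The only genuine subtlety, and the step I would treat most carefully, is the domain containment $\dom h^{\ast}\subseteq\{u:\norm{u}_{2}\leq L_{h}\}$: this is where the Lipschitz hypothesis is actually used, and one must ensure the ray argument is legitimate. It is, precisely because $h$ is finite everywhere, so that $h(tv)$ is well defined and the Lipschitz estimate applies for all $t\geq 0$. Everything else is routine convex duality.
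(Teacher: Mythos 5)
Your proof is correct and follows essentially the same route as the paper: both rest on the biconjugation $h=h^{\ast\ast}$ combined with the containment $\dom h^{\ast}\subseteq\{u\in\Rn:\norm{u}_{2}\leq L_{h}\}$. The only difference is that the paper cites Rockafellar's Corollary 13.3.3 for that containment, whereas you prove it directly via the ray argument (and additionally justify attainment of the maximum by compactness and lower semi-continuity of $h^{\ast}$), making your version self-contained.
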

\begin{proof}
Since $h$ is convex and continuous, it agrees with its biconjugate: $h^{\ast\ast}=h$. By Corollary 13.3.3 in \cite{Roc70}, $\dom h^{\ast}$ is bounded with $\dom h^{\ast}\subseteq\{u:\norm{u}_{2}\leq L_{h}\}$. Hence, the definition of the conjugate gives 
$$
h(\xi)=\sup_{u\in\dom h^{\ast}}\{\inner{u,\xi}-h^{\ast}(u)\}\leq\max_{u:\norm{u}_{2}\leq L_{h}}\{\inner{\xi,u}-h^{\ast}(u)\}.
$$
\end{proof}

\begin{proposition}\label{prop:meta}
Let $(x^{\ast},y^{\ast},z^{\ast})$ be a saddle point for $L$. Let $\{(x^{k},y^{k},z^{k});k\geq 0\}$ be a sequence generated by some algorithm for which the following estimate holds for any $y\in\R^{m}$:
\begin{equation}\label{eq:iteration}
L(x^{k},z^{k},y)-\Psi(x^{\ast})\leq \frac{1}{2k}\left[ C(x^{\ast},z^{\ast})+\frac{1}{c}\norm{y-y^{0}}_{2}^{2}\right]
\end{equation}
for some constant $C(x^{\ast},z^{\ast})>0$. Then 
\[
\Psi(x^{k})-\Psi(x^{\ast})\leq \frac{C_{1}(x^{\ast},z^{\ast},L_{g})}{2k}.
\]
where $C_{1}(x^{\ast},z^{\ast},L_{g})=C(x^{\ast},z^{\ast})+\frac{2}{c}(L^{2}_{g}+\norm {y^{0}}_{2}^{2}).$
\end{proposition}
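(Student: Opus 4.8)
The plan is to convert the Lagrangian bound \eqref{eq:iteration} into a bound on the primal gap $\Psi(x^{k})-\Psi(x^{\ast})$ by evaluating the free dual variable $y$ at a cleverly chosen point, and to use the Lipschitz continuity of $g$ (through Lemma \ref{lem:bound}) to keep that point bounded. The whole argument is a short chain of Fenchel-type inequalities; the only thing that must be engineered carefully is the boundedness of the chosen dual variable.

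First I would represent the primal objective as a constrained maximum. Applying Lemma \ref{lem:bound} to the $L_{g}$-Lipschitz convex function $g$ at the point $\bA x^{k}$ gives $g(\bA x^{k})\leq\max_{\norm{y}_{2}\leq L_{g}}\{\inner{y,\bA x^{k}}-g^{\ast}(y)\}$. Adding $r(x^{k})$ to both sides and recalling $S(x,y)=r(x)-g^{\ast}(y)+\inner{y,\bA x}$, this yields
\[
\Psi(x^{k})=g(\bA x^{k})+r(x^{k})\leq\max_{\norm{y}_{2}\leq L_{g}}S(x^{k},y).
\]
Since $\{y:\norm{y}_{2}\leq L_{g}\}$ is compact and $S(x^{k},\cdot)$ is continuous, the maximum is attained at some $\hat{y}$ with $\norm{\hat{y}}_{2}\leq L_{g}$, so $\Psi(x^{k})\leq S(x^{k},\hat{y})$.

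Next I would connect $S$ to the Lagrangian $L$. For every $y$, the Fenchel--Young inequality $g(z^{k})-\inner{y,z^{k}}\geq -g^{\ast}(y)$ gives, after adding $r(x^{k})+\inner{y,\bA x^{k}}$,
\[
S(x^{k},y)\leq g(z^{k})+r(x^{k})+\inner{y,\bA x^{k}-z^{k}}=L(x^{k},z^{k},y).
\]
The crucial point is that this holds for an \emph{arbitrary} iterate $z^{k}$, so the auxiliary variable presents no obstruction; effectively $S(x^{k},\cdot)=\min_{z}L(x^{k},z,\cdot)$. Combining with the previous step at $y=\hat{y}$ gives $\Psi(x^{k})\leq L(x^{k},z^{k},\hat{y})$.

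Finally I would substitute $y=\hat{y}$ into the hypothesis \eqref{eq:iteration} and control the remaining term using $\norm{\hat{y}}_{2}\leq L_{g}$:
\[
\Psi(x^{k})-\Psi(x^{\ast})\leq L(x^{k},z^{k},\hat{y})-\Psi(x^{\ast})\leq\frac{1}{2k}\left[C(x^{\ast},z^{\ast})+\frac{1}{c}\norm{\hat{y}-y^{0}}_{2}^{2}\right],
\]
where $\norm{\hat{y}-y^{0}}_{2}^{2}\leq 2\norm{\hat{y}}_{2}^{2}+2\norm{y^{0}}_{2}^{2}\leq 2L_{g}^{2}+2\norm{y^{0}}_{2}^{2}$. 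This produces precisely $C_{1}(x^{\ast},z^{\ast},L_{g})=C(x^{\ast},z^{\ast})+\frac{2}{c}(L_{g}^{2}+\norm{y^{0}}_{2}^{2})$. I expect the main (and really the only) obstacle to be the boundedness of the maximizing dual variable: without the Lipschitz continuity of $g$, and hence the boundedness of $\dom g^{\ast}$ supplied by Lemma \ref{lem:bound}, the optimal $y$ in the representation of $\Psi(x^{k})$ could be arbitrarily large and the $\tfrac{1}{c}\norm{y-y^{0}}_{2}^{2}$ term would be uncontrolled, so it is essential that we restrict to $\norm{y}_{2}\leq L_{g}$ throughout.
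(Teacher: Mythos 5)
Your proof is correct and follows essentially the same route as the paper's: Lemma \ref{lem:bound} to confine the dual variable to the ball $\norm{y}_{2}\leq L_{g}$, the Fenchel inequality $S(x^{k},y)\leq L(x^{k},z^{k},y)$, and then the hypothesis \eqref{eq:iteration} combined with $\norm{\hat{y}-y^{0}}_{2}^{2}\leq 2L_{g}^{2}+2\norm{y^{0}}_{2}^{2}$. The only cosmetic difference is that you fix an attaining maximizer $\hat{y}$ (which strictly speaking requires only upper semicontinuity of $S(x^{k},\cdot)$ on the compact ball), whereas the paper carries the supremum over the ball through the entire chain of inequalities.
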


\begin{proof}
Thanks to the Fenchel inequality 
\begin{align*}
L(x,z,y)-S(x,y)=g(z)+g^{\ast}(y)-\inner{y,z}\geq 0.
\end{align*}
By the definition of the convex conjugate 
\begin{align*}
\Psi(x)&=g(\bA x)+r(x)=\sup_{y}\{r(x)+\inner{y,\bA x}-g^{\ast}(y)\}=\sup_{y}S(x,y).
\end{align*}
Now, since $g$ is convex and continuous on $\R^{m}$, we know $g=g^{\ast\ast}$, and we can apply Lemma \ref{lem:bound} to obtain the string of inequalities: 
\begin{align*}
\Psi(x^{k})-\Psi(x^{\ast})&=\sup_{y}\{S(x^{k},y)-\Psi(x^{\ast})\}\leq \sup_{y:\norm{y}_{2}\leq L_{g}}\{S(x^{k},y)-\Psi(x^{\ast})\}
\leq \sup_{y:\norm{y}_{2}\leq L_{g}}\{L(x^{k},z^{k},y)-\Psi(x^{\ast})\}\\
&\leq \sup_{y:\norm{y}_{2}\leq L_{g}}\left\{\frac{1}{2k}\left(C(x^{\ast},z^{\ast})+\frac{1}{c}\norm{y-y^{0}}_{2}^{2}\right)\right\}
\leq \frac{1}{2k}\left[C(x^{\ast},z^{\ast})+\frac{2}{c}(L_{g}+\norm{y^{0}}_{2}^{2})\right].
\end{align*}
\end{proof}
To apply this Meta-Theorem, we need to verify that AD-PMM satisfies the condition \eqref{eq:iteration}. To make progress towards that end, Lemma 4.2 in \cite{SheTeb14} proves that 
\begin{equation}\label{eq:bound1}
L(x^{k+1},z^{k+1},y)-L(x,z,y^{k+1})\leq T_{k}(x,z,x^{k+1})+R_{k}(x,y,z)
\end{equation}
for all $(x,z,y)\in\setX\times\setZ\times\R^{m}$ and some explicitly given functions $T_{k}$ and $R_{k}$. Furthermore, it is shown that 
\begin{align*}
&T_{k}(x,z,x^{k+1})\leq\frac{c}{2}\left(\norm{\bA x-z^{k}}_{2}^{2}-\norm{\bA x-z^{k+1}}_{2}^{2}+\frac{c}{2}\norm{\bA x^{k+1}-z^{k+1}}_{2}^{2}\right),\text{ and }\\
&R_{k}(x,z,y)\leq \frac{1}{2}\left(\Delta_{k}(x,\bM_{1})+\Delta_{k}(z,\bM_{2})+\frac{1}{c}\Delta_{k}(y,\Id)\right)-\frac{c}{2}\norm{\bA x^{k+1}-z^{k+1}}_{2}^{2},
\end{align*}
where for any point $z$ and positive semi-definite matrix $\bM$, 
\begin{align*}
\Delta_{k}(z,\bM)=\frac{1}{2}\norm{z-z^{k}}_{\bM}^{2}-\frac{1}{2}\norm{z-z^{k+1}}^{2}_{\bM}.
\end{align*}
Using these bounds and summing inequality \eqref{eq:bound1} over $k=0,1,\ldots,N-1$, we get 
\[
\sum_{k=0}^{N-1}[L(x^{k+1},z^{k+1},y)-L(x,z,y^{k+1})]\leq \frac{1}{2}\left(c\norm{\bA x-z^{0}}_{2}^{2}+\norm{x-x^{0}}^{2}_{\bM_{1}}+\norm{z-z^{0}}^{2}_{\bM_{2}}+\frac{1}{c}\norm{y-y^{0}}_{2}^{2}\right)
\]
Dividing both sides by $N$ and using the convexity of the Lagrangian with respect to $(x,z)$ and the linearity in $y$, we easily get 
\[
L(\bar{x}_{N},\bar{z}_{N},y)-L(x,z,\bar{y}_{N})\leq \frac{1}{2N}\left(C(x,z)+\frac{1}{c}\norm{y-y^{0}}_{2}^{2}\right)
\]
in terms of the ergodic average
\begin{align*}
\bar{x}_{N}=\frac{1}{N}\sum_{k=0}^{N-1}x^{k},\; \bar{y}_{N}=\frac{1}{N}\sum_{k=0}^{N-1}y^{k},\; \bar{z}_{N}=\frac{1}{N}\sum_{k=0}^{N-1}z^{k},
\end{align*}
and the constant $C(x,z)=c\norm{\bA x-z^{0}}^{2}+\norm{x-x^{0}}^{2}_{\bM_{1}}+\norm{z-z^{0}}^{2}_{\bM_{2}}$. Therefore, we can apply Proposition \ref{prop:meta} to the sequence of ergodic averages $(\bar{x}_{k},\bar{z}_{k},\bar{y}_{k})$ generated by AD-PMM, and derive a $O(1/N)$ convergence rate in terms of the function value.

\subsection{Relation to the Chambolle-Pock primal-dual splitting}
\label{sec:PD}

In this subsection we discuss the relation between ADMM and the celebrated Chambolle-Pock (a.k.a Primal-Dual Hybrid Gradient) method \cite{ChaPoc11}, designed for problems in the form \eqref{eq:saddle}.

\begin{algorithm}{\bf{The Chambolle-Pock primal-dual algorithm} (CP)}
\\
	{\bf Input:} pick $(x^{0},y^{0},p^{0})\in\Rn\times\R^{m}\times\R^{m}$ and $c,\tau>0,\theta\in[0,1]$;\\
	{\bf General step:} For $k=0,1,\ldots$ do:
	\begin{align}  
  x^{k+1}&= \argmin_{x}\{r(x)+\frac{1}{2\tau}\norm{x-(x^{k}-\tau\bA^{\top}p^{k})}_{2}^{2}\label{eq:CP-x}\\
  y^{k+1}&=\argmin_{y}\{g^{\ast}(y)+\frac{1}{2c}\norm{y-(y^{k}+c\bA x^{k+1})}_{2}^{2}\} \label{eq:CP-y}\\ 
  p^{k+1}&=y^{k+1}+\theta(y^{k+1}-y^{k}) \label{eq:CP-p}.
  \end{align}	
\end{algorithm}

For later references it is instructive to write this algorithm slightly differently in operator-theoretic notation. From the optimality condition of the step $x^{k+1}$, we see 
\begin{align*}
0\in \partial r(x^{k+1})+\frac{1}{\tau}(x^{k+1}-w^{k})=(\Id+\tau \partial r)(x^{k+1})-w^{k}
\end{align*}
where $w^{k}=x^{k}-\tau\bA^{\top} p^{k}$. Hence, we can give an explicit expression of the update as 
\begin{align*}
x^{k+1}=(\Id+\tau\partial r)^{-1}(w^{k})=(\Id+\tau\partial r)^{-1}(x^{k}-\tau\bA^{\top}p^{k}).
\end{align*}
Similarly, we can write the update $y^{k+1}$ explicitly as 
\begin{align*}
y^{k+1}=(\Id+c\partial g^{\ast})^{-1}(y^{k}+c\bA x^{k+1}).
\end{align*}
When $\theta=0$ we obtain the classical Arrow-Hurwicz primal-dual algorithm \cite{ArrHurUza58}. For $\theta=1$ the last line in CP becomes $p^{k+1}=2y^{k+1}-y^{k}$, which corresponds to a simple linear extrapolation based on the current and previous iterates. In this case, \cite{ChaPoc11} provide a $O(1/N)$ non-asymptotic convergence guarantees in terms of the primal-dual gap function of the corresponding saddle-point problem. The CP primal-dual splitting method has been of immense importance in imaging and signal processing and constitutes nowadays a standard method for tackling large-scale instances in these application domains. Interestingly, if $\theta=1$, CP is a special case of the proximal version of  ADMM (AD-PMM). To establish this connection, let us set $\bM_{1}=\frac{1}{c}\Id-c\bA^{\top}\bA$ and $\bM_{2}=0$. After some elementary manipulations, we arrive at the update formula for $x^{k+1}$ in AD-PMM \eqref{eq:ADPMM-x} as  
\begin{align*}
x^{k+1}=\argmin_{x}\{r(x)+\frac{1}{2\tau}\norm{x-(x^{k}-\tau\bA^{\top}(y^{k}+c(\bA x^{k}-z^{k})))}_{2}^{2}\}.
\end{align*}
Introducing the variable $p^{k}=y^{k}+c(\bA x^{k}-z^{k})$, the above reads equivalently as 
\begin{align*}
x^{k+1}=\argmin_{x}\{r(x)+\frac{1}{2\tau}\norm{x-(x^{k}-\tau\bA^{\top}p^{k})}_{2}^{2}\}=\prox_{\tau r}(x^{k}-\tau\bA^{\top}p^{k}).
\end{align*}
For $\bM_{2}=0$, the second update step in AD-PMM \eqref{eq:ADPMM-z} reads as 
\[
z^{k+1}=(\Id+\frac{1}{c}\partial g)^{-1}\left(\bA x^{k+1}+\frac{1}{c}y^{k}\right)=\prox_{\frac{1}{c}g}\left(\frac{1}{c}(c\bA x^{k+1}+y^{k})\right).
\]
Moreau's identity \cite[][Proposition 23.18]{BauCom16} states that  
\begin{equation}\label{eq:Moreau}
c\prox_{\frac{1}{c}g}(u/c)+\prox_{c g^{\ast}}(u)=u\quad\forall u\in\setV. 
\end{equation}
Applying this fundamental identity, we see 
\begin{align*}
c z^{k+1}+\prox_{c g^{\ast}}(y^{k}+c\bA x^{k+1})=y^{k}+c\bA x^{k+1}.
\end{align*}
The second summand is just the $y^{k+1}$-update in the CP algorithm, so that we deduce 
\begin{align*}
c z^{k+1}+y^{k+1}=y^{k}+c\bA x^{k+1}\iff y^{k+1}=y^{k}+c(\bA x^{k+1}-z^{k+1}).
\end{align*}
Consequently, 
\begin{align*}
p^{k+1}=y^{k+1}+c(\bA x^{k+1}-z^{k+1})=2y^{k+1}-y^{k},
\end{align*}
and hence we recover the three-step iteration defining CP:
\begin{align*}  
  x^{k+1}&= \argmin_{x}\{r(x)+\frac{1}{2\tau}\norm{x-(x^{k}-\tau\bA^{\top}p^{k})}_{2}^{2}\}\\
  y^{k+1}&= \argmin_{y}\{g^{\ast}(y)+\frac{1}{2c}\norm{y-(y^{k}+c\bA x^{k+1})}_{2}^{2}\}\\ 
  p^{k+1}&=2y^{k+1}-y^{k}.
\end{align*}

Given the above derivations, we can summarize this subsection by the following interesting observation.
\begin{proposition}[Proposition 3.1, \cite{SheTeb14}]
Let $(x^{k},y^{k},p^{k})$ be a sequence generated by CP with $\theta=1$. Then, the $y^{k+1}$-update \eqref{eq:CP-y} is equivalent to
\begin{align*}
&z^{k+1}=\argmin_{z}\{g(z)+\frac{c}{2}\norm{\bA x^{k+1}-z+\frac{1}{c}y^{k}}_{2}^{2}\},\\
& y^{k+1}=y^{k}+c(\bA x^{k+1}-z^{k+1})
\end{align*}
which corresponds to the primal $z^{k+1}$-minimization step \eqref{eq:ADPMM-z} with $\bM_{2}=0$, and to the dual multiplier update for $y^{k+1}$ \eqref{eq:ADPMM-y} of AD-PMM, respectively. Moreover, the minimization step with respect to $x$ in the CP algorithm given in \eqref{eq:CP-x} together with \eqref{eq:ADPMM-y} reduces to \eqref{eq:ADPMM-x} of AD-PMM with $\bM_{1}= \tau\Id-c\bA^{\top}\bA$. 
\end{proposition}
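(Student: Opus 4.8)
The statement asserts an exact algorithmic equivalence, so the plan is to start from AD-PMM with the prescribed data $\bM_1=\tau\Id-c\bA^\top\bA$ and $\bM_2=0$ and to show, one update at a time, that its iteration collapses onto the three lines of CP with $\theta=1$. The entire argument hinges on a single dictionary between the two schemes that I will propagate inductively: given the CP triple $(x^k,y^k,p^k)$, I identify the AD-PMM multiplier with $y^k$ and introduce the splitting variable through the relation $p^k=y^k+c(\bA x^k-z^k)$. Under this correspondence the two methods should generate identical primal iterates, and the work reduces to checking that each subproblem transforms correctly and that the dictionary is preserved from step $k$ to step $k+1$.

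First I would treat the $x$-update. Substituting $\bM_2=0$ and the prescribed $\bM_1$ into \eqref{eq:ADPMM-x}, the decisive observation is that the $-c\bA^\top\bA$ block of $\bM_1$ is engineered precisely to cancel the Hessian $c\bA^\top\bA$ of the augmented penalty $\tfrac{c}{2}\norm{\bA x-z^k+\tfrac1c y^k}_2^2$. Once this cancellation is performed, the quadratic part in $x$ becomes isotropic, so completing the square reduces the subproblem to an unweighted Euclidean proximal step $\prox_{\tau r}(x^k-\tau\bA^\top p^k)$, the step-size being matched to the surviving scalar coefficient of $\norm{x-x^k}_2^2$. The remaining bookkeeping is to reassemble the linear terms into the center $x^k-\tau\bA^\top p^k$, which is exactly where the substitution $p^k=y^k+c(\bA x^k-z^k)$ enters, reproducing \eqref{eq:CP-x}. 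I would also record the mild caveat that $\bM_1$ must be positive (semi)definite, which holds once its isotropic coefficient dominates $c\norm{\bA}_2^2$, so that the AD-PMM template genuinely applies.

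Next I would reconcile the dual step, which is the more interesting half. With $\bM_2=0$ the $z$-update \eqref{eq:ADPMM-z} is the optimality condition defining $z^{k+1}=\prox_{\frac1c g}(\bA x^{k+1}+\tfrac1c y^k)$, and the multiplier update \eqref{eq:ADPMM-y} reads $y^{k+1}=y^k+c(\bA x^{k+1}-z^{k+1})$. The bridge to CP is Moreau's identity \eqref{eq:Moreau}: applied with $u=y^k+c\bA x^{k+1}$ it converts the primal prox $\prox_{\frac1c g}(u/c)$ hidden in the $z$-step into the dual prox $\prox_{cg^\ast}(u)$, which is precisely the minimizer defining \eqref{eq:CP-y}. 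Reading Moreau's relation as $c\,z^{k+1}+y^{k+1}=y^k+c\bA x^{k+1}$ then simultaneously recovers the CP $y$-update and confirms the multiplier formula. Finally, feeding $c(\bA x^{k+1}-z^{k+1})=y^{k+1}-y^k$ into the extrapolation $p^{k+1}=y^{k+1}+c(\bA x^{k+1}-z^{k+1})$ dictated by the dictionary gives $p^{k+1}=2y^{k+1}-y^k$, which is \eqref{eq:CP-p} at $\theta=1$; this both reproduces the CP extrapolation and shows the variable dictionary is preserved, closing the induction.

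The main obstacle is conceptual rather than computational: the correspondence is not a term-by-term match, since CP carries no splitting variable $z$ at all. Reconciling the two therefore forces one to introduce $z$ by hand and to prove that the single dual proximal map of CP secretly encodes both the $z$-minimization and the multiplier update of AD-PMM. Moreau's identity is the linchpin that makes the primal-prox formulation of ADMM coincide with the dual-prox formulation of CP, and without it the two schemes look structurally unrelated. By comparison, the completing-the-square algebra in the $x$-step, tracking the $\bA^\top\bA$ cancellation and re-collapsing the linear terms into $x^k-\tau\bA^\top p^k$, is the most error-prone manipulation but remains routine once the role of $\bM_1$ as a linearization of the quadratic penalty is recognized.
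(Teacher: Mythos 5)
Your proposal follows essentially the same route as the paper's derivation: the same dictionary $p^{k}=y^{k}+c(\bA x^{k}-z^{k})$, the same cancellation of the $c\bA^{\top}\bA$ Hessian by $\bM_{1}$ with completion of the square in the $x$-step, and the same use of Moreau's identity \eqref{eq:Moreau} to convert the $z$-minimization and multiplier update into the dual prox step \eqref{eq:CP-y} and the extrapolation $p^{k+1}=2y^{k+1}-y^{k}$. The one wrinkle—reducing to $\prox_{\tau r}(x^{k}-\tau\bA^{\top}p^{k})$ actually requires $\bM_{1}=\frac{1}{\tau}\Id-c\bA^{\top}\bA$ rather than $\tau\Id-c\bA^{\top}\bA$—is a constant mismatch already present in the paper's own statement and in-text derivation, so it does not change the assessment.
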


\section{The Conditional Gradient Method}
\label{sec:CG}

The Bregman proximal gradient method is an efficient first-order method whenever the prox-mapping can be evaluated efficiently. In this section, we present a class of first-order methods for convex programming problems which gain relevance in large-scale problems for which the computation of the prox-mapping is a significant computational bottleneck. We describe \emph{conditional gradient} (CG) methods, a family of methods which, originating in the 1960's, have received much attention in both machine learning and optimization in the last 10 years. CG is designed to be a method which solves convex programming problems over compact convex sets. Therefore, we assume in this section that the feasible set $\setX$ is a compact convex set. 

\begin{assumption}\label{ass:CG-X}
The set $\setX$ is a compact convex subset in a finite-dimensional real vector space $\setV$.
\end{assumption}

\subsection{Classical Conditional gradient}
To set the stage for the material presented in this section, we give a quick summary on the main developments of the classical CG method. CG, also known as the \emph{Frank-Wolfe method}, was independently suggested by Frank and Wolfe \cite{frank1956algorithm} for linearly constrained quadratic problems and  by Levitin and Polyak \cite{levitin1966constrained} for solving problem \eqref{eq:structured} with $r(x)\equiv0$ and a general compact set $\setX$, i.e.,
\begin{align}\label{prob:CG}
\Psi_{\min}(\setX):=\min\{f(x)\vert x\in\setX\}.
\end{align}
CG attempts to solve problem \eqref{prob:CG} by sequentially calling a \emph{linear oracle} (LO).

\begin{definition}
The Operator $\scrL_{\setX}:\setV^{\ast}\rightarrow\setX$ is a \emph{linear oracle} (LO) over set $\setX$ if for any vector $y\in\setV^{\ast}$ we have that
\begin{equation}\label{eq:CG-LO}
\scrL_{\setX}(y)\in \argmin_{s\in\setX}\inner{y,s}.
\end{equation}
\end{definition}
The practical application of an LO requires to make a selection from the set of solutions of the defining linear minimization problem. The precise definition of such a selection mechanism is not of any importance, and thus we are just concerned with any answer $\scrL_{\setX}(y)$ revealed by the oracle.

The information-theoretic assumption that the optimizer can only query a linear minimization oracle is clearly the main difference between CG and other gradient-based methods discussed in Section \ref{sec:MD}. For instance, the dual averaging algorithm solves at each iteration a strongly convex subproblem of the form 
\begin{equation}\label{eq:SPDA}
\min_{u\in\setX}\{\inner{y,u}+h(u)\},
\end{equation}
where $h\in\scrH_{\alpha}(\setX)$, whereas CG solves a single linear minimization problem at each iteration. This difference in the updating mechanism yields  the following potential advantages of the CG method. 
\begin{enumerate}
\item \underline{Low iteration costs:} In many cases it is much easier to construct an LO rather than solving the non-linear subproblem \eqref{eq:SPDA}. We emphasize that this potential benefit of CG does not depend on the structure of the objective function $f$, but rather on the geometry of the feasible set $\setX$. To illustrate this point, consider the set $\setX=\{\XX\in\R^{n\times n}_{\text{sym}}\vert \XX\succeq 0,\tr(\XX)\leq 1\}$, known as the spectrahedron (cf. Example \ref{ex:spectrahedron}). Computing the orthogonal projection of some symmetric matrix $\YY$ onto the spectrahedron requires first to compute the full spectral decomposition $\YY=\UU \DD \UU^\top$, and then for the diagonal matrix $\DD$ computing the projection of its diagonal elements onto the simplex. The resulting projection is therefore given by
	\[
	P_\setX(\YY)=\UU\Diag(P_{\Delta_n}(\diag(\DD)))\UU^\top.
	\]
	In contrast, computing a linear oracle over $\setX$ for the symmetric matrix $\YY$ involves finding the eigenvector of $\YY$ corresponding to the minimal   
	eigenvalue, that is $\scrL_{\setX}(\YY)=uu^\top$, where $u^{\top}\YY u=\lambda_{\min}(\YY)$. This operation can be typically done using such methods as Power, Lanczos or Kaczmarz, and randomized versions thereof - see \cite{KucWoz92} for general complexity results. For large-scale problems, computing such a leading eigenvector to a predefined accuracy is much more efficient than a full spectral decomposition. 
\item \underline{Simplicity:} The definition of an LO does not rely on a specific DGF $h\in\scrH_{\alpha}(\setX)$ and makes the update affine invariant.
\item \underline{Structural properties of the updates:}  When the feasible set $\setX$ can be represented as the convex hull of a countable set of atoms ("generators"), then CG often leads to simple updates, activating only few atoms at each iteration. In particular, in the case of the spectrahedron, the LO returns a matrix of rank one, which allows for sparsity preserving iterates. 
\end{enumerate}

The classical form of CG takes the answer obtained from querying the LO at a given gradient feedback $y=\nabla f(x)$, and returns the target vector 
\begin{equation}
p(x)=\scrL_{\setX}(\nabla f(x))\qquad\forall x\in\setX.
\end{equation}
It proposes then to move in the direction $p(x)-x$. As in every optimization routine, a key question is how to design efficient step-size rules to guarantee reasonable numerical performance. Letting $x^{k-1}$ and $p^{k}=p(x^{k-1})$ be a current position of the method together with its implied target vector, the following policies are standard choices: 
\begin{align}
\text{Standard:}&\quad \gamma_k=\frac{1}{2+k},\label{eq:standard}\\
\text{Exact line search:}&\quad  \gamma_k\in \argmin_{t\in(0,1]} f(x^{k-1}+t(p^k-x^{k-1})),\label{eq:LS} \\
\text{Adaptive:} &\quad \gamma_k=\min\left\{\frac{\inner{\nabla f(x^{k-1}),x^{k-1}-p^k}}{L_{f}\norm{x^{k-1}-p^k}^2},1\right\}.
\label{eq:adaptive}
\end{align}
Exact line search is conceptually attractive, but can be costly in large-scale applications when computing the function value is computationally expensive. To understand the construction of the adaptive step-size scheme, it is instructive to introduce a primal gap (merit) function to the problem, which is the fundamental performance measure of CG methods. The primal gap (merit) function is defined as
\begin{equation}\label{eq:CG-e}
\ce(x):=\sup_{u\in\setX}\inner{\nabla f(x),x-u}.
\end{equation}
This merit function is just the gap program (see e.g. \cite{FacPan03}) associated to the monotone variational inequality \eqref{eq:VI-f} in which the non-smooth part is trivial. In terms of this merit function, the celebrated descent lemma \eqref{eq:DLclassical} yields immediately 
\begin{align*}
f(x+t(p(x)-x))&\leq f(x)+t\inner{\nabla f(x),p(x)-x}+\frac{L_{f}t^{2}}{2}\norm{p(x)-x}^{2}\\
&=f(x)-t\ce(x)+\frac{L_{f}t^{2}}{2}\norm{p(x)-x}^{2}=f(x)-\eta_{x}(t),
\end{align*}
where $\eta_{x}(t):=t\ce(x)-\frac{L_{f}t^{2}}{2}\norm{p(x)-x}^{2}$. Optimizing this function with respect to $t\in[0,1]$ yields the largest-possible per-iteration decrease and returns the adaptive step-size rule in \eqref{eq:adaptive}. Once the optimizer decided upon the specific step-size policy, the classical CG picks one of the step sizes \eqref{eq:standard}, \eqref{eq:LS}, or \eqref{eq:adaptive}, and performs the update 
$$
x^{k}=x^{k-1}+\gamma_k(p(x^k)-x^{k-1}).
$$
\begin{algorithm}[h!]{\bf{The classical conditional gradient} (CG)}\\
	{\bf Input:} A linear oracle $\mathcal{L}_{\setX}$, a starting point $x^0\in\setX$.\\
	{\bf Output:} A solution $x$ such that $\Psi(x)-\Psi_{\min}(\setX)<\varepsilon$.\\
	{\bf General step:} For $k=1,2,\ldots$\\
	\qquad Compute $p^k=\mathcal{L}_X(\nabla f(x^{k-1}))$;\\
	\qquad Choose a step-size $\gamma_{k}$ either by \eqref{eq:standard}, \eqref{eq:LS}, \eqref{eq:adaptive};\\
	\qquad Update $x^k=x^{k-1}+\gamma_k(p^k-x^{k-1})$;\\
	\qquad Compute $\ce^k=\ce(x^{k-1})$.\\
	\qquad If $\ce^k<\varepsilon$ return $x^k$.
\end{algorithm}
The convergence properties of classical CG under either of the step-size variants above is well documented in the literature (see e.g. the recent text by \cite{lan2020first}, or \cite{Jag13}). We will obtain a full convergence and complexity theory under our more general analysis of the generalized CG scheme. 

\subsubsection{Relative smoothness}

The basic ingredient in proving convergence and complexity results on the classical CG is the fundamental inequality 
\[
f(x+t(p(x)-x))\leq f(x)-t\ce(x)+\frac{L_{f}t^{2}}{2}\norm{p(x)-x}^{2}.
\]
Based on the relative smoothness analysis in Section \ref{sec:NoLips}, it seems to be intuitively clear that we could easily prove also convergence of CG when instead of the restrictive Lipschitz gradient assumption we make a relative smoothness assumption in terms of the pair $(f,h)$ for some DGF $h\in\scrH_{\alpha}(\setX)$. Indeed, if we are able to estimate a scalar $L_f^h>0$ such that $L_f^hh(x)-f(x)$ is convex on $\setX$, then the modified descent lemma \eqref{eq:DLNoLips} yields the overestimation 
\begin{equation}
f(x+(tp-x))\leq f(x)-t\ce(x)+L_f^h D_{h}(x+t(p-x),x).
\end{equation}
Instead of requiring that $f$ has a Lipschitz continuous gradient over the convex compact set $\setX$, let us alternatively require the following:
\begin{assumption}\label{ass:alt_Lipschitz}
	There exists a DGF $h\in\scrH_{\alpha}(\setX)$ and a constant $L_f^h>0$, such that $L_f^h h-f$ is convex on $\setX$, and $h$ has a finite curvature on 
	$\setX$, that is,
	\begin{equation}\label{eq:FCh}
	\Omega^{2}_h(\setX)\coloneqq\sup_{x,u\in\setX, t\in[0,1]}  \frac{2 D_h(t u+(1-t)x,x)}{t^2}<\infty.
	\end{equation}
\end{assumption}
Note that when choosing $h$ to be the squared Euclidean norm $h(x)=\frac{1}{2}\norm{x}^2$ and $L_f^h=L_f$, then Assumption~\ref{ass:alt_Lipschitz} is equivalent to the Lipschitz gradient assumption, where $\Omega_{h}(\setX)$ is the diameter of set $\setX$. On the other hand, choosing $h(x)=f(x)$ and $L_f^h=L_{f}$, we essentially retrieve the finite curvature assumption used by Jaggi \cite{Jag13}. 

\begin{remark}
It is clear that the finite curvature assumption \eqref{eq:FCh} is not compatible with the DGF to be essentially smooth on $\setX$. We are therefore forced to work with non-steep distance-generating functions.
\end{remark}

The analysis of CG under a relative smoothness condition and Assumption \ref{ass:alt_Lipschitz} runs in the same way as for the classical CG. However, the adaptive step-size is reformulated as
\[
\gamma_k=\min\left\{\frac{\inner{\nabla f(x^{k-1}),x^{k-1}-p^k}}{L_f^h \Omega^{2}_h(\setX)},1\right\}.
\]
This can be easily seen by replacing the upper model function $f(x)-t\ce(x)+L_f^h D_{h}(x+t(p-x),x)$, with its more conservative bound $f(x)-t\ce(x)+\frac{L_f^h t^{2}}{2}\Omega^{2}_{h}(\setX)$.
Of course, in the case of the Euclidean norm this results in a smaller step-size than the adaptive step, which hints towards a deterioration of performance. Nevertheless, this trick allows us to handle convex programming problems outside the Lipschitz smooth case, which is not uncommon in various applications \cite{BiaChe15,BiaCheYe15,HaeLiuYe18}.

\subsection{Generalized Conditional Gradient}

Introduced by Bach \cite{Bac15} and \cite{nesterov2018complexity}, the generalized conditional gradient (GCG) method, is targeted to solve our master problem \eqref{eq:Opt} over a compact set $\setX$. 
 To handle the composite case, we need to modify our definition of a linear oracle accordingly. 
\begin{definition}
\label{Def:gen_lin_or}
	Operator $\scrL_{\setX,r}:\setV^{\ast}\rightarrow\setX$ is a \emph{generalized linear oracle} (GLO) over set $\setX$ with respect to function $r$ if for any vector $y\in\setV^{\ast}$ we have that
	\[
	\scrL_{\setX,r}(y)\in \argmin_{x\in\setX} \inner{y,x}+r(x).
	\]
\end{definition}
Besides this more demanding oracle assumption, the resulting generalized conditional gradient method is formally identical to the classical CG. In particular, we can consider the target vector 
\begin{equation}
p(x)=\scrL_{\setX,r}(\nabla f(x))\qquad\forall x\in\setX
\end{equation}
and the same three step size policies as in the classical CG, with the obvious modifications:
\begin{align}
\text{Exact line search: } \quad & \gamma_k\in \argmin_{t\in[0,1]} \Psi(x^{k-1}+t(p^k-x^{k-1})),\label{eq:GSC-LS} \\
\text{Adaptive: } \quad & \gamma_k=\min\left\{\frac{r(x^{k-1})-r(p^k)+\inner{\nabla f(x^{k-1}),x^{k-1}-p^k}}{L_f\norm{x^{k-1}-p^k}^2},1\right\}.
\label{eq:GSC-adaptive}
\end{align}
The adaptive step size variant is derived from an augmented merit function, taking into consideration the non-smooth composite nature of the underlying optimization problem. Indeed, as again can be learned from the basic theory of variational inequalities (see \cite{Nesterov:2007aa}), the natural merit function for the composite model problem \eqref{eq:Opt} is the non-smooth function 
\begin{equation}
\ce(x)=\sup_{u\in\setX}\Gamma(x,u),\text{ where }\Gamma(x,u):=r(x)-r(u)+\inner{\nabla f(x),x-u}.
\end{equation}
 By definition, we see that $\ce(x)\geq 0$ for all $x\in\setX$, with equality if and only if $x\in\setX^{\ast}.$ These basic properties justify our terminology, calling $\ce(x)$ a merit function. Of course, $\ce(\cdot)$ is also easily seen to be convex. Furthermore, using the convexity of $f$, one first sees that 
 $$
 \inner{\nabla f(x),x-u}\geq f(x)-f(u),
 $$
 so that for all $x,u\in\dom(r)$,
 \begin{align*}
 \Gamma(x,u)&\geq r(x)-r(u)+f(x)-f(u)=\Psi(x)-\Psi(u).
 \end{align*}
 From here, one immediately arrives at the relation 
 \begin{equation}\label{eq:LB-CG}
 \ce(x)\geq \Psi(x)-\Psi_{\min}(\setX).
 \end{equation}
 Clearly, with $r=0$, the above specification yields the classical CG. 
 
\subsubsection{Basic Complexity Properties of GCG}
We now turn to prove that the GCG method with one of the above mentioned step-sizes converges at a rate of $O(\frac{1}{k})$. We will derive this rate under the standard assumption Lipschitz smoothness assumption on $f$. This gives us access to the classical descent lemma \eqref{eq:DLclassical}. Combining this with the assumed convexity of the non-smooth function $r(\cdot)$, we readily obtain 
\begin{align*}
\Psi(x^{k-1}+t(p^{k}-x^{k-1}))&\leq f(x^{k-1})-t\inner{\nabla f(x^{k-1}),p^{k}-x^{k-1}}+\frac{t^{2}L_{f}}{2}\norm{p^{k}-x^{k-1}}^{2}+(1-t)r(x^{k-1})+t r(p^{k})\\
&=\Psi(x^{k-1})-t\ce(x^{k-1})+\frac{t^{2}L_{f}}{2}\norm{p^{k}-x^{k-1}}^{2}.
\end{align*}
Based on this fundamental inequality of the per-iteration decrease, we can deduce the iteration complexity via an induction argument. First, one observes that for each of the three introduced step-size rules (standard, line search and adaptive), one obtains a recursion of the form
\begin{align*}
\Psi(x^{k-1}+\gamma_{k}(p^{k}-x^{k-1}))\leq \Psi(x^{k-1})-\gamma_{k}\ce(x^{k-1})+\frac{L_{f}\gamma^{2}_{k}}{2}\norm{p^{k}-x^{k}}^{2}.
\end{align*}
When denoting $s^{k}:=\Psi(x^{k})-\Psi_{\min}(\setX)$, $\ce^{k}=\ce(x^{k-1})$ and $\Omega^{2}\equiv \Omega_{\frac{1}{2}\norm{\cdot}^{2}}^{2}(\setX)=\max_{x,u\in\setX}\norm{x-u}^{2}$, this gives us 
\[
s^{k}\leq s^{k-1}-\gamma_{k}\ce^{k}+\frac{L_{f}\gamma^{2}_{k}}{2}\Omega^{2}.
\]
Applying to this recursion Lemma 13.13 in \cite{Beck17}, we deduce the next iteration complexity result for GCG.
\begin{theorem}
Consider algorithm GCG with one of the step size rules: standard \eqref{eq:standard}, line search \eqref{eq:GSC-LS}, or adaptive \eqref{eq:GSC-adaptive}. Then
\begin{align*}
\Psi(x^{k})-\Psi_{\min}(\setX)\leq \frac{2\max\{\Psi(x^{0})-\Psi_{\min}(\setX), L_{f}\Omega^{2}\}}{k}\quad \forall k\geq 1.
\end{align*}
\end{theorem}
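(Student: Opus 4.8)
The plan is to reduce everything to the single scalar recursion assembled just before the statement and then close it by induction. The one inequality I still need to feed in is the merit-function lower bound \eqref{eq:LB-CG}: since $\ce^{k}=\ce(x^{k-1})$, it gives $\ce^{k}\ge\Psi(x^{k-1})-\Psi_{\min}(\setX)=s^{k-1}$. Substituting this into the per-iteration inequality
\[
s^{k}\le s^{k-1}-\gamma_{k}\ce^{k}+\frac{L_{f}\gamma_{k}^{2}}{2}\Omega^{2}
\]
eliminates the merit term and turns the descent estimate into the contraction-plus-error recursion
\[
s^{k}\le(1-\gamma_{k})s^{k-1}+\frac{L_{f}\Omega^{2}}{2}\gamma_{k}^{2}.
\]
This is the only place \eqref{eq:LB-CG} is used, and it is what makes the whole argument go through.

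Next I would show that this recursion holds for all three step-size rules with $\gamma_{k}$ replaced by a common \emph{reference} value $\bar\gamma_{k}$, say $\bar\gamma_{k}=\frac{2}{k+1}\in(0,1]$; one induction then covers every case simultaneously. For the standard rule this is by definition. For the adaptive rule \eqref{eq:GSC-adaptive}, $\gamma_{k}$ minimizes over $[0,1]$ the quadratic model $t\mapsto -t\,\ce^{k}+\frac{L_{f}t^{2}}{2}\norm{p^{k}-x^{k-1}}^{2}$; since $\norm{p^{k}-x^{k-1}}^{2}\le\Omega^{2}$, this model is dominated by the conservative one with $\Omega^{2}$ in place of $\norm{p^{k}-x^{k-1}}^{2}$, and evaluating the conservative model at $t=\bar\gamma_{k}$ reproduces the displayed recursion. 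For the line-search rule \eqref{eq:GSC-LS}, $\Psi(x^{k})$ does not exceed $\Psi$ at any other point of the segment, in particular the point with $t=\bar\gamma_{k}$, to which the descent lemma \eqref{eq:DLclassical} applies. In every case we land on $s^{k}\le(1-\bar\gamma_{k})s^{k-1}+\frac{L_{f}\Omega^{2}}{2}\bar\gamma_{k}^{2}$.

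Finally I would run the induction with $C:=\max\{s^{0},L_{f}\Omega^{2}\}$, proving the slightly sharper bound $s^{k}\le\frac{2C}{k+1}$, which implies the stated $\frac{2C}{k}$. The base case $k=1$ uses $\bar\gamma_{1}=1$, giving $s^{1}\le\frac{L_{f}\Omega^{2}}{2}\le\frac{C}{2}\le C=\frac{2C}{2}$. For the inductive step, assuming $s^{k-1}\le\frac{2C}{k}$ and using $L_{f}\Omega^{2}\le C$, a one-line estimate reduces $s^{k}\le\frac{2C}{k+1}$ to the elementary inequality $\frac{1}{k+1}\le\frac{1}{k}$; this is exactly the content invoked through Lemma 13.13 of \cite{Beck17}. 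The role of the maximum in $C$ is to let a single constant dominate both the initial gap $s^{0}$, which controls the base case, and the curvature term $L_{f}\Omega^{2}$, which controls the accumulated per-step error. I expect the main obstacle to be conceptual rather than computational: recognizing that the line-search and adaptive steps each guarantee at least the decrease of the reference step, so that the three policies collapse onto one recursion. Once that is granted, the $O(1/k)$ rate is a routine induction.
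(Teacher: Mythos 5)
Your proof is correct for the exact line-search rule \eqref{eq:GSC-LS} and the adaptive rule \eqref{eq:GSC-adaptive}, and it takes a genuinely different route from the paper's. The paper's self-contained argument (which, note, is given only for the adaptive rule) never linearizes the merit term: it keeps the case dichotomy built into \eqref{eq:GSC-adaptive}, obtaining the halving $s^{k}\le\frac{1}{2}s^{k-1}$ when $\gamma_{k}=1$ and the quadratic decrease $s^{k}\le s^{k-1}-\frac{(s^{k-1})^{2}}{2L_{f}\Omega^{2}}$ when $\gamma_{k}<1$, and then resolves the resulting max-recursion by a two-phase argument: geometric decay until $s^{k}$ falls below $L_{f}\Omega^{2}$, followed by the Dunn-type induction of \cite[Lemma 5.1]{dunn1979rates}. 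You instead invoke \eqref{eq:LB-CG} at the outset to pass to the linear contraction $s^{k}\le(1-\bar\gamma_{k})s^{k-1}+\frac{L_{f}\Omega^{2}}{2}\bar\gamma_{k}^{2}$, justified for the two optimizing rules by the fact that the step actually taken makes at least as much progress as the reference step $\bar\gamma_{k}=\frac{2}{k+1}$ (minimality of $\Psi$ along the segment, respectively minimality of the quadratic model over $[0,1]$), and you close with a single induction. This is the classical Frank--Wolfe analysis in the spirit of \cite{Jag13}: more elementary (no phase splitting, no Dunn lemma), it treats line search and adaptive steps uniformly — the paper's printed proof covers neither line search nor the standard rule — and it yields the marginally sharper bound $2C/(k+1)$. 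What the paper's route retains is the per-iteration quadratic-decrease information, which makes the fast geometric decay of the early iterations explicit instead of absorbing it into the constant $\max\{s^{0},L_{f}\Omega^{2}\}$.

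The one genuine gap is the standard rule. Your assertion that the recursion with $\bar\gamma_{k}=\frac{2}{k+1}$ holds for it \emph{by definition} is false as written: \eqref{eq:standard} prescribes $\gamma_{k}=\frac{1}{2+k}$, which is not $\frac{2}{k+1}$, and for a prescribed, non-optimizing step there is no minimality property that lets you substitute a reference step — the recursion must be run with the step actually taken. The discrepancy is not cosmetic: with $\gamma_{k}=\frac{1}{k+2}$ the contraction recursion only yields a bound of order $\log k/k$, since the homogeneous product $\prod_{j=1}^{k}\bigl(1-\frac{1}{j+2}\bigr)=\frac{2}{k+2}$ decays like $1/k$ while the accumulated error term picks up a harmonic sum; so your technique (indeed, any argument based solely on this recursion) cannot deliver the stated $2\max\{s^{0},L_{f}\Omega^{2}\}/k$ bound for that step size. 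If, as seems likely, the intended standard step is the classical $\frac{2}{k+2}$, your induction goes through essentially verbatim with target $\frac{2C}{k+2}$ — but as a separate, parallel induction, not as a consequence of the reference-step comparison. In fairness, the paper's proof has the same blind spot (it treats only the adaptive rule and defers the rest to Lemma 13.13 of \cite{Beck17}); still, to cover all three rules as the theorem claims, the standard rule needs this short dedicated argument rather than the ``by definition'' dismissal.
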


\begin{proof}
We give a self-contained proof of this result for the adaptive step-size policy \eqref{eq:GSC-adaptive}.

If $\gamma_{k}=1$, the per-iteration progress is easily seen to be
\begin{align*}
s^{k}&\leq s^{k-1}-\ce^{k}-\frac{L_{f}}{2}\Omega^{2}\leq s^{k-1}-\ce^{k}\leq s^{k-1}-\frac{1}{2}\ce^{k}
\leq s^{k-1}-\frac{1}{2}s^{k-1}=\frac{1}{2}s^{k-1}
\end{align*}
where we have used $\ce^{k}\geq 0$, and \eqref{eq:LB-CG}. For $\gamma_{k}=\frac{r(x^k)-r(p^k)+\inner{\nabla f(x^{k-1}),x^{k-1}-p^k}}{L_f\norm{x^{k-1}-p^k}^2}=\frac{\ce^{k}}{L_{f}\norm{p^{k}-x^{k-1}}^{2}}$, a simple computation reveals 
\begin{align*}
s^{k}\leq s^{k-1}-\frac{\ce^{k}}{2L_{f}\norm{p^{k}-x^{k-1}}^{2}}\leq s^{k-1}-\frac{(\ce^{k})^{2}}{2L_{f}\Omega^{2}}\leq s^{k-1}-\frac{(s^{k-1})^{2}}{2L_{f}\Omega^{2}}.
\end{align*}
Summarizing these two cases, we see 
\[
s^{k}\leq\max\left\{\frac{1}{2}s^{k-1},s^{k-1}-\frac{(s^{k-1})^{2}}{2L_{f}\Omega^{2}}\right\}.
\]
Thus, the convergence is split into two periods, which are defined by $K:=\log_2\left(\lfloor \frac{s^0}{\min\{L_f\Omega^{2},s^0\}} \rfloor\right)+1$.
If $k\leq K$ then $s^{k-1}\geq L_f\Omega^{2}$ and thus $s^{k}\leq \frac{1}{2}s^{k-1}$, which implies 
\begin{align*}
s^k\leq 2^{-k}s^{0},\;k\in\{0,1,\ldots,K\}.
\end{align*} 
However, if $k> K$ then $s^{k-1}< \min\{L_f \Omega^{2},s^0\}$ and $s^k\leq s^{k-1}-\frac{1}{2L_f\Omega^{2}}(s^{k-1})^2$, which by induction (see for example  \cite[Lemma 5.1]{dunn1979rates}) implies that
\begin{align*}
s^{k}\leq  \frac{s^K}{1+\frac{s^K}{2L_f \Omega^{2}} (k-K)}\leq \frac{2L_f \Omega^{2}}{2+ (k-K)} \leq \frac{\max\{K,2\}L_f \Omega^{2}}{k}\leq \frac{2\max\{s^0, L_{f}\Omega^{2}\}}{k},\; k\geq K+1,
\end{align*}
where the second inequality follows from $s^K< \min\{L_f \Omega^{2},s^0\}$, the third inequality follows from $\frac{a}{a+(k-K)}$ being a monotonic function in $a\geq 0$ for any  $k\geq K+1$, and the last inequality follows from $K\leq \max\left\{2,\frac{s^0}{L_f \Omega^{2}}\right\}$.
Combining these two results, we have that
\begin{align*}
s^{k}\leq  \frac{2\max\{s^0,L_f \Omega^{2}\}}{k}.
\end{align*}

\end{proof}

\subsubsection{Alternative assumptions and step-sizes}\label{sec:assumption_stepsize}
A key takeaway from the analysis of the generalized conditional gradient is that one needs to have a bound on the quadratic term of the upper model
$$
t\mapsto Q(x,p(x),t,L_{f}):=\Psi(x)-t\ce(x)+\frac{L_{f}t^{2}}{2}\norm{p(x)-x}^{2}.
$$
Such a bound was given to us essentially for free under the compactness assumption of the domain $\setX$, and the Lipschitz-smoothness assumption on the smooth part $f$. The resulting complexity constant is then determined by $L_{f}\Omega^{2}$.  Moreover, this constant will be involved in lower bounds of the adaptive step-size rule \eqref{eq:GSC-adaptive}. However, such a constant may not be known, or may be expensive to compute. Moreover, a global estimate of this constant is not actually needed for obtaining an upper bound. To see this, we proceed formally as follows. Consider an alternative quadratic function of the form 
\begin{align*}
Q(x,p,t,M):=\Psi(x)-t \ce(x) + \frac{t^{2}M}{2}q(p,x),
\end{align*}
where $q(p,x)$ is a positive function bounded by some constant $C$, and choose $\gamma(x,M):=\min\{1,\frac{\ce(x)}{M q(p(x),x)}\}$, for $p(x)=\scrL_{\setX,r}(\nabla f(x))$. Let $M>0$ be a constant such that the point obtained by using this step-size
is upper bounded by the corresponding quadratic function, {\it i.e.},
\begin{align}\label{eq:CG_key_ineq}
\Psi\left((1-\gamma(x,M))x+\gamma(x,M)p(x)\right)\leq Q(x,p(x),\gamma(x,M),M)< \Psi(x).
\end{align}
Thus applying the update $x^{+}:=(1-\gamma(x,M))x+\gamma(x,M)p(x)$, we obtain 
$$
\Psi(x^{+})-\Psi_{\min}(\setX)\leq \Psi(x)-\Psi_{\min}(\setX)-\frac{1}{2}\ce(x)\leq \frac{1}{2}(\Psi(x)-\Psi_{\min}(\setX))
$$
if $\gamma(x,M)=1$, and 
\begin{align*}
\Psi(x^{+})-\Psi_{\min}(\setX)&\leq \Psi(x)-\Psi_{\min}(\setX)-\frac{1}{2M q(p(x),x)}\ce(x)^{2}\leq \Psi(x)-\Psi_{\min}(\setX)-\frac{1}{2M C}(\Psi(x)-\Psi_{\min}(\setX))^{2}
\end{align*}
if $\gamma(x,M)=\frac{\ce(x)}{M q(p(x),x)}$. If $(x^{k})_{k\geq 0}$ is the trajectory defined in this specific way, we get the familiar recursion
\begin{align*}
s^{k}\leq \min\{\frac{1}{2}s^{k-1},s^{k-1}-\frac{1}{2 M_{k}C}(s^{k-1})^2\}
\end{align*}
in terms of the approximation error $s^{k}:=\Psi(x^{k})-\Psi_{\min}(\setX)$, and the local estimates $(M_{k})_{k\geq 0}$. Thus, as we are able to bound $M_{k}$ from above for all iterations of the algorithm, the same convergence as for GCG can be achieved.

Based on this observation, and knowing that $M_k$ must be bounded for Lipschitz smooth objective functions, we can try to determine $M_k$ via a backtracking procedure, as suggested in \cite{pedregosa2020linearly}. By construction, the resulting iterates $x^{k}$ will induce monotonically decreasing function values so that the whole trajectory $x^{k}$ will be contained in the level set $\{x\in\setX\vert\Psi(x)\leq\Psi(x^{0})\}$. Hence, it is sufficient for $Q(x^k,p^k,t,M)$ to be an upper bound on $\Psi(x_{t})$ for any point $x_{t}=(1-t)x^{k-1}+t p^k$ such that $\Psi(x_{t})\leq \Psi(x^0)$. Thus, the Lipschitz continuity (or curvature) can be assumed only on the appropriate level set and there is no need to insist on global Lipschitz smoothness on the entire set $\setX$. This insight enabled, for example, proving the $O(1/k)$ convergence rate of CG with adaptive and exact step-size rules when applied to self-concordant functions, which are not necessarily Lipschitz smooth on the predefined set $\setX$ \cite{dvurechensky20a,dvurechensky2020generalized}. However, this observation need not apply to the standard step size rule \eqref{eq:standard}, since the standard step-size choice does not guarantee that all the iterates remain in the appropriate level set.

To conclude, we reiterate that the step-size choices analyzed here are the most common, but there may be many more choices of step-size which provide similar guarantees. For example, \cite{freund2016new} suggests new step-size rules based on an alternative analysis of the CG method that utilizes an updated duality gap. \cite{nesterov2018complexity} discusses recursive step-size rules, and in \cite{odor2016frank,dvurechensky20a} new step-size rules are suggested based on additional assumptions on the problem structure.

\subsection{Variants of CG}
One of the main drawbacks of CG method is that, in general, it comes with worse complexity bounds than BPGM for strongly convex functions. Indeed, it was shown as early as in 1968 by Cannon and Cullum \cite{canon1968tight} (see also \cite{Lan13, lan2020first}) that the rate of $O(\frac{1}{k})$ is in fact tight, even when the function $f$ is strongly convex.  This slow convergence is due to the well-documented zig-zagging effect between different extreme points in $\setX$. In the smooth case, where $r=0$, and the objective function $f$ and the feasible set $\setX$ are both strongly convex, only a rate of $O(\frac{1}{k^2})$ can be shown \cite{garber2015faster}, whereas \cite{nesterov2018complexity} showed an accelerated $O(\frac{1}{k^2})$ rate of convergence for GCG with strongly convex $r$ ($\mu>0$). Linear convergence of the CG method can only be proved under additional assumptions regarding the problem structure or location of the optimal solution (see e.g. \cite{levitin1966constrained,dunn1979rates,guelat1986some, epelman2000condition, beck2004conditional}).

Departing from these somewhat negative results, variants of the classical CG were suggested in order to obtain the desired linear convergence in the case of strongly convex function $f$. We will discuss four of these variants: Away-step CG, Fully-corrective CG, CG based on a local linear optimization oracle (LLOO), and CG with sliding. 

\subsubsection{Away-step CG}
The away-step variation of CG (AW-CG), first suggested by Wolfe \cite{wolfe1970}, treats the case where $\setX$ is a polyhedron. It requires two calls of the LO at each iteration. The first call generates $p^k=\mathcal{L}_\setX(\nabla f(x^k))$, defined in the original CG algorithm, while the second call generates an additional vector $u^k=\mathcal{L}_\setX(-\nabla f(x^k))$. The two vectors $p^k$ and $u^k$ define the \emph{forward direction} $d^{k}_{FW}=p^k-x^{k-1}$ and the \emph{away direction} $d^{k}_{A}=x^{k-1}-u^k$, respectively. By construction, both of this directions are descent directions. The effectively chosen direction at iteration $k$ is obtained by
\begin{align*}
d^k=\argmax_{d\in \{d^{k}_{FW},d^{k}_{A}\}} \inner{-\nabla{f}(x^k),d},
\end{align*}
with a corresponding updating step
\begin{align*} 
x^k=x^{k-1}+\gamma_{k}d^k.
\end{align*}  
Here, the choice of the step-size $\eta_k$ will also depend on the direction chosen.
The first analysis of this algorithm by Gu{\'e}lat and Marcotte \cite{guelat1986some} assumes that the step-size is chosen using exact line search over $\gamma_k\in [0,\gamma_{\max}]$, where $\gamma_{\max}:=\max\{t\geq 0: x^{k-1}+t d^k\in \setX\}$.  Under this step-size choice, they prove linear convergence of CG for strongly convex $f$. However, this rate estimate  depends on the distance between the optimal solution and the boundary of set $T\subset\setX$, which is the minimal face of $\setX$ containing the optimal solution. This result was later extended in \cite{lacoste2015global}, with a slight variation on the original algorithm. In this variation, the set $\setX$ is represented as the convex hull of a finite set of atoms $\mathcal{A}$ (not necessarily containing only its vertices), and a representation of the current iterate as a convex combination of these atoms is maintained throughout the algorithm, {\it i.e.}, $x^k=\sum_{S^k} \lambda^k_a a$ where $S^k=\{a\in \mathcal{A}:\lambda^k_a>0\}$ is defined as the set of active atoms. Thus, the AW-CG produces $p^k\in\mathcal{A}$ and $u^k\in S^k$, and the away step maximal step size is respecified as $\gamma_{\max}=\frac{\lambda_{u^k}}{1-\lambda_{u^k}}$. This implies, that using the maximal away-step step-size will not necessarily result on a point on the boundary of $\setX$. Thus, when $f$ is strongly convex, Jaggi and Lacoste-Julian \cite{lacoste2015global} show a linear convergence of AW-CG with a rate which only depends on the geometry of set $\setX$, which is captured by the \emph{pyramidal width} parameter. 
The Pairwise variant of AW-CG, which is also presented and analyzed in \cite{lacoste2015global}, takes $d^k=u^k-p^k$ and $\gamma_{\max}=\lambda_{u^k}$, and has similar analysis.

In \cite{beck2017linearly}, Beck and Shtern extend the linear convergence results of AS-CG to functions of the form $f(x)=g(\bA x)+\inner{b,x}$ where $g$ is a strongly convex function. The linear rate depends on a parameter based on the Hoffman constant, which captures both on the geometry of $\setX$ as well as matrix $\bA$. It is also worth mentioning, a stream of work which shows linear convergence of AS-CG where the strong convexity assumption is replaced by the assumption that sufficient second order optimality conditions, known as Robinson conditions \cite{robinson1982generalized}, are satisfied (see for example \cite{damla2008linear}). 

\subsubsection{Fully-corrective CG}
The Fully-corrective variant of CG (FC-CG) also involves polyhedral $\setX$, and aims to reduce the number of calls to the linear oracle, by replacing them with a more accurate minimization over a convex-hull of some subset $\mathcal{A}^k\subseteq\mathcal{A}$. The heart of the method is a correction routine, which updates the correction atoms $\mathcal{A}^{k}$ and iterate $x^{k}$, and satisfy the following:
\begin{align*}
S^k&\subseteq\mathcal{A}^k\\
f(x^k)&\leq \min_{t\in [0,1]} f((1-t)x^{k-1}+t p^k)\\
\epsilon &\geq \max_{s\in S^k} \inner{\nabla f(x^k),s-x^k}
\end{align*} 
where $p^k=\mathcal{L}_\setX(\nabla f(x^{k-1}))$, and $\epsilon$ is a given accuracy parameter.
The FC-CG was known by various names depending on the updating scheme of $\mathcal{A}^k$ and $x^k$ \cite{holloway1974extension,von1977simplicial}, and was unified and analyzed to show linear convergence in \cite{lacoste2015global}. The convergence analysis of FC-CG is similar to that of AW-CG, and is based on the correction routine guaranteeing that the forward step is larger than the away-step computed in the previous iteration. 

In order to apply FC-CG one must choose a correction routine, and the linear convergence analysis does not take into account the computational cost of this routine. One choice of a correction routine is to apply AS-CG on the subset $\mathcal{A}^k=S^{k-1}\cup\{p^k\}$ until the conditions are satisfied. This correction routine is wise only if efficient linear oracles $\mathcal{L}_{\mathcal{A}^k}$ can be constructed for all $k$ such that their low computational cost balances the routine's iteration complexity.

\subsubsection{Enhanced LO based CG}
A variant of CG which is based on an enhanced linear minimization oracle, was suggested by Garber and Hazan \cite{garber2016LLOO}.
In this variant, the linear oracle $\mathcal{L}_{\setX}(c)$ is replaced by a \emph{local oracle} $\mathcal{L}_{\setX,\rho}(c,x,\delta)$ with some constant $\rho\geq 1$, which takes an additional radius input $\delta$ and returns a point $p\in\setX$ satisfying
\begin{align*}
\norm{p-x}&\leq \rho \delta \\
\inner{p,y}&\leq \min_{u\in\setX:\norm{u-x}\leq \delta}\inner{u,y}. 
\end{align*} 
Thus, the only deviation from the CG algorithm is that $p^k$ is obtained by applying $\mathcal{L}_{X,\rho}(\nabla f(x^k),x^k,\delta_k)$ for a suitably chosen sequence $(\delta_{k})_{k}$. The linear convergence for the case where the smooth part $f$ is strongly convex, is obtained by a specific update of $\delta_k$ at each step of the algorithm. This update depends on the Lipschitz constant $L_f$, the strong convexity constant of $f$, and the parameter $\rho$. Moreover, despite the fact that LLOO-CG can theoretically be applied to any set $\setX$, constructing a general LLOO is challenging. In \cite{garber2016LLOO}, the authors suggest an LLOO with $\rho=\sqrt{n}$ when the set $\setX$ is the unit simplex, and generalize it for convex polytopes with $\rho=\sqrt{n}\tilde{\rho}$ where $\tilde{\rho}$ depends on some geometric properties the polytope which may generally not tractably computed. Thus, while the strong convexity and geometric properties of the problem are only used for the analysis of the AW-CG and FC-CG, the associated parameters are explicitly used in the execution of LLOO-CG. The difficulty of accurately estimating the strong convexity and the geometric parameters renders the LLOO-CG less applicable in practice.   

\subsubsection{CG with gradient sliding}
\label{S:S-CG}
Each iteration of CG requires one call to the linear minimization oracle and one gradient evaluation. Coupled with our knowledge about the iteration complexity of CG, this fact implies that CG requires $O(1/\eps)$ gradient evaluations of the objective function. This is suboptimal, when compared with the $O(1/\sqrt{\eps})$ gradient evaluations for smooth convex optimization, as we will see in Section \ref{sec:accelerated}. While it is known that within the linear minimization oracle, the order estimate $O(1/\eps)$ for the number of calls of the LO is unimprovable, in this section we review a method based on the linear minimization oracle which can skip the computation of gradients from time to time. This improves the complexity of LO-based methods and leads us to the \emph{conditional gradient sliding} (S-CG) algorithm introduced by Lan and Zhou \cite{lan2016conditional}. S-CG is a numerical optimization method which runs in epochs and overall contains some similarities with accelerated methods, to be thoroughly surveyed in Section \ref{sec:accelerated}. S-CG has been described in the context of the smooth convex programming problem for which $r=0$.  

\begin{algorithm}{\bf{The conditional gradient sliding methods} (S-CG)}\\
	{\bf Input:} A linear oracle $\mathcal{L}_{\setX}$ a starting point $x^0\in\setX$.\\
	\qquad $(\beta_{k})_{k},(\gamma_{k})_{k}$ parameter sequence such that 
	\begin{align*}
	\gamma_{1}=1,\;L_{f}\gamma_{k}\leq \beta_{k},\\
	\frac{\beta_{k}\gamma_{k}}{\Gamma_{k}}\geq \frac{\beta_{k-1}\gamma_{k-1}}{\Gamma_{k-1}},
	\end{align*}
	where 
	\begin{equation}
	\Gamma_{k}=\left\{\begin{array}{ll}
	 1 & \text{if }k=1,\\
	 \Gamma_{k-1}(1-\gamma_{k}) & \text{if }k\geq 2.
	 \end{array}
	 \right.
	 \end{equation}
	 
	{\bf General step:} For $k=1,2,\ldots$\\
	\qquad Compute 
	\begin{align*}
	z^{k}&=(1-\gamma_{k})y^{k-1}+\gamma_{k}x^{k-1},\\
	x^{k}&=\text{CndG}(\nabla f(z^{k}),x^{k-1},\beta_{k},\eta_{k}),\\
	y^{k}&=(1-\gamma_{k})y^{k-1}+\gamma_{k}x^{k}.
	\end{align*}
\end{algorithm}
Similarly to accelerated methods, S-CG keeps track of three sequentially updated sequences. The update of the sequence $(x^{k})$ is stated in terms of a procedure $\text{CndG}$, which describes an inner loop of conditional gradient steps. This subroutine aims at approximately solving for the proximal step 
\begin{align*}
\min_{x\in X} f(z^{k})+\inner{\nabla f(z^{k}),x-z^k}+\frac{\beta_k}{2}\norm{x-x^{k-1}}^2
\end{align*}
up to an accuracy of $\eta_k$. As will become clear later, the S-CG can thus be thought of as an approximate version of the accelerated scheme presented in Section~\ref{sec:acc_grad}.

\begin{algorithm}{\bf{The procedure} $\text{CndG}(g,u,\beta,\eta)$}\\
	{\bf Input: } $u_{1}=u,t=1$.\\
	{\bf Output: } point $u^{+}=\text{CndG}(g,u,\beta,\eta).$ \\
	{\bf General step:} Let $v_{t}=\argmax_{x\in\setX}\inner{g+\beta(u_{t}-u),u_{t}-x}$\\
	\qquad If $V_{g,u,\beta}(u_{t})=\inner{g+\beta(u_{t}-u),u_{t}-v_{t}}\leq \eta$, set $u^{+}=u_{t}$; \\
	\qquad else, set $u_{t+1}=(1-\alpha_{t})u_{t}+\alpha_{t}v_{t}$, where 
	$$
	\alpha_{t}=\min\left\{1,\frac{\inner{\beta(u-u_{t})-g,v_{t}-u_{t}}}{\beta\norm{v_{t}-u_{t}}^{2}}\right\}.
	$$
	\qquad Set $t\leftarrow t+1$. Repeat General step. 
\end{algorithm}

The main performance guarantee of the algorithm S-CG is summarized in the following theorem:
\begin{theorem}
For all $k\geq 1$ and $u\in\setX$, we have 
\begin{equation}
f(y^{k})-f(u)\leq \frac{\beta\gamma_{k} \Omega^{2}}{2}+\Gamma_{k}\sum_{i=1}^{k}\frac{\eta_{i}\gamma_{i}}{\Gamma_{i}},
\end{equation}
where $\Omega\equiv\Omega_{\frac{1}{2}\norm{\cdot}}(\setX)$. The number of calls of the linear minimization oracle is bounded by $\lceil\frac{6 \beta_{k}\Omega^{2}}{\eta_{k}}\rceil$. In particular, if the parameter sequences in S-CG are chosen as
$$
\beta_{k}=\frac{3L_{f}}{k+1},\gamma_{k}=\frac{3}{k+2},\eta_{k}=\frac{L_{f}\Omega^{2}}{k(k+1)},
$$
then 
$$
f(y^{k})-f(u)\leq \frac{15L_{f}\Omega^{2}}{2(k+1)(k+2)}.
$$
As a consequence, the total number of calls of the function gradients and the LO oracle is bounded by $O\left(\sqrt{\frac{L_{f}\Omega^{2}}{\eps}}\right)$, and $O(L_{f}\Omega^{2}/\eps)$, respectively. 
\end{theorem}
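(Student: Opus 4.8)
The plan is to treat this as a standard accelerated-gradient estimate, in which the proximal step is solved only approximately by the inner Frank--Wolfe loop CndG. First I would record what the inner loop guarantees. Viewed as a function of $x$, the prox objective is $\phi(x)=\inner{\nabla f(z^k),x}+\tfrac{\beta_k}{2}\norm{x-x^{k-1}}^2$, whose gradient at $u_t$ is exactly $g+\beta(u_t-u)$ with $g=\nabla f(z^k)$, $u=x^{k-1}$, $\beta=\beta_k$; hence CndG is plain Frank--Wolfe on the $\beta_k$-smooth quadratic $\phi$, and its stopping test is precisely the Frank--Wolfe gap $V_{g,u,\beta}$. Since $V_{g,u,\beta}(x^k)=\max_{x\in\setX}\inner{\nabla\phi(x^k),x^k-x}\leq\eta_k$ at termination, the output $x^k$ satisfies the approximate optimality condition
\[
\inner{\nabla f(z^k)+\beta_k(x^k-x^{k-1}),\,x^k-x}\leq \eta_k\qquad\forall x\in\setX.
\]

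Next I would derive the one-step recursion. Applying $L_f$-smoothness of $f$ at $z^k$ and using $y^k-z^k=\gamma_k(x^k-x^{k-1})$ gives $f(y^k)\leq f(z^k)+\gamma_k\inner{\nabla f(z^k),x^k-x^{k-1}}+\tfrac{L_f\gamma_k^2}{2}\norm{x^k-x^{k-1}}^2$. Splitting $f(z^k)$ as $(1-\gamma_k)f(z^k)+\gamma_k f(z^k)$ and bounding the two copies by the subgradient inequality at $y^{k-1}$ and at $u$, the linear terms collapse---using $z^k=(1-\gamma_k)y^{k-1}+\gamma_k x^{k-1}$---to the single term $\gamma_k\inner{\nabla f(z^k),x^k-u}$. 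Feeding in the approximate optimality condition together with the identity $\inner{x^k-x^{k-1},\,x^k-u}=\tfrac12\bigl(\norm{x^k-x^{k-1}}^2+\norm{x^k-u}^2-\norm{x^{k-1}-u}^2\bigr)$, and discarding the $\norm{x^k-x^{k-1}}^2$ contributions (legitimate because $L_f\gamma_k\leq\beta_k$ makes their net coefficient $\tfrac{\gamma_k}{2}(L_f\gamma_k-\beta_k)$ nonpositive), I would arrive at
\[
f(y^k)-f(u)\leq (1-\gamma_k)\bigl(f(y^{k-1})-f(u)\bigr)+\gamma_k\eta_k+\tfrac{\gamma_k\beta_k}{2}\bigl(\norm{x^{k-1}-u}^2-\norm{x^k-u}^2\bigr).
\]

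I would then divide by $\Gamma_k$, use $\Gamma_k=\Gamma_{k-1}(1-\gamma_k)$ and $\gamma_1=1$ to telescope the first term away, and sum from $1$ to $k$. The error term accumulates to $\Gamma_k\sum_{i=1}^k\gamma_i\eta_i/\Gamma_i$, while the weighted distance differences, with nondecreasing weights $\gamma_i\beta_i/(2\Gamma_i)$ (the stated parameter hypothesis) and each $\norm{x^i-u}^2\leq\Omega^2$, are bounded by Abel summation by $\tfrac{\gamma_k\beta_k}{2\Gamma_k}\Omega^2$, giving the first displayed bound of the theorem (with $\beta\equiv\beta_k$). For the inner-loop count I would invoke the standard Frank--Wolfe gap analysis for $\beta_k$-smooth functions over a set of squared-diameter $\Omega^2$, showing the gap falls below $\eta_k$ within $\lceil 6\beta_k\Omega^2/\eta_k\rceil$ oracle calls. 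Finally, substituting $\beta_k=3L_f/(k+1)$, $\gamma_k=3/(k+2)$, $\eta_k=L_f\Omega^2/(k(k+1))$, I would compute the telescoping product $\Gamma_k=6/(k(k+1)(k+2))$, check $\gamma_1=1$, $L_f\gamma_k\leq\beta_k$, and monotonicity of $\beta_k\gamma_k/\Gamma_k=\tfrac{3}{2}L_f k$, and evaluate the two terms to $\tfrac{9L_f\Omega^2}{2(k+1)(k+2)}$ and $\tfrac{3L_f\Omega^2}{(k+1)(k+2)}$, summing to the claimed $\tfrac{15L_f\Omega^2}{2(k+1)(k+2)}$; one gradient per outer step yields the $O(\sqrt{L_f\Omega^2/\eps})$ gradient count, while $6\beta_i\Omega^2/\eta_i=18i$ summed over $i\leq k$ gives the $O(L_f\Omega^2/\eps)$ oracle count.

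The step I expect to be most delicate is the inner-loop bound on the number of linear-oracle calls. Unlike the outer recursion, the Frank--Wolfe gap $V_{g,u,\beta}(u_t)$ is not monotone, so one cannot simply track its decrease; the argument must combine the per-step descent $\phi(u_{t+1})\leq\phi(u_t)-\alpha_t V(u_t)+\tfrac{\beta\alpha_t^2}{2}\norm{v_t-u_t}^2$ with the adaptive choice of $\alpha_t$ and an averaging argument to guarantee that some iterate's gap drops below $\eta_k$ within the stated number of steps, and pinning down the explicit constant $6$ requires care. The outer Abel-summation step is the second place where the precise parameter monotonicity hypothesis is essential and must be used exactly as stated.
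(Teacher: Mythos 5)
Your proposal cannot be checked against an in-paper argument, because the paper gives none: the theorem is stated as a summary of the S-CG guarantee and imported directly from Lan and Zhou \cite{lan2016conditional}. What you have written is, in substance, a correct reconstruction of the proof from that reference. The outer-loop analysis is complete and accurate: the stopping rule of CndG does yield the approximate optimality condition $\inner{\nabla f(z^k)+\beta_k(x^k-x^{k-1}),x^k-x}\leq\eta_k$ for all $x\in\setX$; the linear terms do collapse to $\gamma_k\inner{\nabla f(z^k),x^k-u}$ via $z^k=(1-\gamma_k)y^{k-1}+\gamma_k x^{k-1}$; the three-point identity plus $L_f\gamma_k\leq\beta_k$ kills the $\norm{x^k-x^{k-1}}^2$ terms; dividing by $\Gamma_k$ and using $\gamma_1=1$ telescopes correctly; and the Abel summation with the nondecreasing weights $\gamma_i\beta_i/(2\Gamma_i)$ (exactly the algorithm's parameter hypothesis) gives the $\tfrac{\beta_k\gamma_k\Omega^2}{2}$ term. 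Your numerical substitutions also check out: $\Gamma_k=\tfrac{6}{k(k+1)(k+2)}$, $\eta_i\gamma_i/\Gamma_i=\tfrac{L_f\Omega^2}{2}$, the two contributions $\tfrac{9L_f\Omega^2}{2(k+1)(k+2)}$ and $\tfrac{3L_f\Omega^2}{(k+1)(k+2)}$ sum to the claimed bound, and $6\beta_i\Omega^2/\eta_i=18i$ summed over $i\leq k=O(\sqrt{L_f\Omega^2/\eps})$ gives the $O(L_f\Omega^2/\eps)$ oracle count. You also correctly read the paper's unsubscripted ``$\beta$'' in the first display as $\beta_k$, which is a typo inherited from compressing the original statement.

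The one place your write-up is a citation rather than a proof is the inner-loop bound $\lceil 6\beta_k\Omega^2/\eta_k\rceil$ on LO calls, and since that bound is part of the statement being proved, a self-contained argument would have to include it. You identify the right mechanism (the Wolfe gap $V_{g,u,\beta}(u_t)$ is not monotone, so one combines the per-step descent of the quadratic $\phi$ in the two regimes of the adaptive step, $\alpha_t=1$ versus $\alpha_t<1$, with the $O(\beta\Omega^2/t)$ bound on $\phi(u_t)-\min_\setX\phi$ and a best-iterate/contradiction count), and you flag honestly that extracting the constant $6$ takes care; this is Lan--Zhou's inner-loop lemma and is a genuinely known result, so I regard your proof as correct modulo that standard lemma rather than as containing a conceptual gap.
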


\section{Accelerated Methods}
\label{sec:accelerated}
%

In previous sections we focused on simple first-order methods with sublinear convergence guarantees in the convex case, and linear convergence in the strongly convex case. Towards the end of the discussion in Section \ref{sec:MD}, we pointed out the possibility to accelerate simple iterative schemes via suitably defined extrapolation steps. In this last section of the survey, we are focusing on such \emph{accelerated methods}. The idea of acceleration dates back to 1980's. The rationale for this research direction is the desire to understand the computational boundaries of solving optimization problems. Of particular interest has been the unconstrained smooth, and strongly convex optimization problem. This would be covered by our generic model \eqref{eq:Opt} by setting $r=0,\setX=\setV=\Rn$ and $f$ strongly convex with parameter $\mu_{f}>0$ and $L_{f}$-smooth. The standard approach to quantify the computational hardness of optimization problems is through the oracle model. Upon receiving a query point $x$, the oracle reports the corresponding function value $f(x)$, and in first-order models, the function gradient $\nabla f(x)$ as well. In their seminal work, Nemirovski and Yudin \cite{NY83} showed that for any first-oder optimization algorithm, there exists an $L_f$-smooth (with some $L_f>0$) and convex function $f:\Rn\to\R$ such that the number of queries required to obtain an $\eps$-optimal solution $x^{\ast}$ which satisfies 
$$
f(x^{\ast})<\min_{x}f(x) +\eps,
$$
is at least of the order of $\min\{n,\sqrt{L_f/\mu_{f}}\}\ln(1/\eps)$ if $\mu_{f}>0$ and $\min\{n\ln(1/\eps),\sqrt{L_f/\eps}\}$, if $\mu_{f}=0$. This bound, obtained by information-theoretical arguments, turned out to be tight. 
Nemirovski \cite{nemirovski1982orth} proposed a method achieving the optimal rate $O(1/k^2)$ via a combination of standard gradient steps with the classical center of gravity method, which required additional small-dimensional minimization, see also a recent paper \cite{nesterov2020primal-dual}. 
Nesterov \cite{Nes83} proposed an optimal method with explicit step-sizes, which is now known as Nesterov's accelerated gradient method. Mainly driven by applications in imaging and machine learning, the idea of acceleration turned out to be very productive in the last 20 years. During this time span it has been extended to composite optimization \cite{Nes13,BecTeb09}, general proximal setups \cite{Nes05,Nes18}, stochastic optimization problems \cite{devolder2011stochastic,lan2012optimal,ghadimi2012optimal,ghadimi2013optimal,dvurechensky2016stochastic,gasnikov2016stochasticInter,dvurechensky2018decentralize}, optimization with inexact oracle \cite{aspremont2008smooth,devolder2014first,dvurechensky2016stochastic,gasnikov2016stochasticInter,cohen2018acceleration,gasnikov2019fast,kamzolov2020universal,stonyakin2020inexact}, variance reduction methods \cite{frostig2015un-regularizing,lin2015universal,zhang2015stochastic,allen2016katyusha,lan2017optimal,ivanova2020oracle}, alternating minimization methods \cite{diakonikolas2018alternating,guminov2019acceleratedAM}, random coordinate descent \cite{nesterov2012efficiency,lee2013efficient,fercoq2015accelerated,lin2014accelerated,nesterov2017efficiency,gasnikov2016accelerated,allen2016even,shalev-shwartz2014accelerated,dvurechensky2017randomized,diakonikolas2018alternating} and other randomized methods such as randomized derivative-free methods \cite{nesterov2017random,dvurechensky2017randomized,gorbunov2018accelerated,vorontsova2019accelerated2} and randomized directional search \cite{dvurechensky2017randomized,vorontsova2019accelerated1,dvurechensky2020accelerated}, second-order methods \cite{Nes08} and even high-order methods \cite{Bae09,nesterov2019implementable,gasnikov2019near}. 

\subsection{Accelerated Gradient Method}\label{sec:acc_grad}
In this section we consider one of the multiple variants of an Accelerated Gradient Method. This variant is close to the accelerated proximal method in \cite{tseng2008accelerated}, which has been very influential to the field. Another very influential version of the accelerated method, especially in applications, is the FISTA algorithm \cite{BecTeb09}, which is excellently described in \cite{Beck17}. The version we present here is inspired by the Method of Similar Triangles \cite{gasnikov2018universal,Nes18} and is obtained via the change of the Dual Averaging step (see Section \ref{sec:DA}) to the Bregman Proximal Gradient step. 
In our presentation of the accelerated method, we consider a particular choice of the the control sequences, {\it i.e.}, numerical sequences $\alpha_{k}$, $A_{k}$ from \cite{dvurechensky2020stable,dvurechensky2018computational}. A more general way of constructing such sequences can be found in \cite{lan2020first}, see also the constants used in the S-CG method described at the end of Section~\ref{sec:CG}. Moreover, the version we present here, is very flexible and allows one to obtain accelerated methods for many settings. As a particular example, below in Section \ref{sec:Universal}, we show how a slight modification of this method allows one to obtain universal accelerated gradient method.

Our aim is to solve the composite model problem \eqref{eq:Opt} within a general Bregman proximal setup, formulated in Section \ref{S:Bregman_setup}.  Let $\setX\subseteq\setV$ be a closed convex set in a finite-dimensional real vector space $\setV$ with primal-dual pairing $\inner{\cdot,\cdot}$ and general norm $\norm{\cdot}$. We are given a DGF $h \in \scrH_{1}(\setX)$. The scaling of the strong convexity parameter to the value 1 actually is without loss of generality, modulo a constant rescaling of the employed DGF. Recall the Bregman divergence $D_{h}(u,x)=h(u)-h(x)-\inner{\nabla h(x),u-x}\geq \frac{1}{2}\norm{u-x}^{2}$ for all $x\in\setX^{\circ},u\in\setX$
\begin{algorithm}{\bf{The Accelerated Bregman Proximal Gradient Method} (A-BPGM)}
\\
	{\bf Input:} pick $x^{0}=u^{0}=y^{0}\in\dom(r)\cap\setX^{\circ}$, set $A_0=0$\\
	{\bf General step:} For $k=0,1,\ldots$ do:\\
	\qquad Find $\alpha_{k+1}$ from quadratic equation $A_k+\alpha_{k+1}=L_{f}\alpha_{k+1}^2$. Set $A_{k+1}=A_k+\alpha_{k+1}$.\\
	\qquad Set $y^{k+1} = \frac{\alpha_{k+1}}{A_{k+1}}u^{k}+\frac{A_{k}}{A_{k+1}}x^{k}$.\\
	\qquad Set \vspace{-28pt}
	\begin{align*}
	\hspace{-28pt} u^{k+1}&= \scrP^{h}_{\alpha_{k+1} r}
		(u^{k},\alpha_{k+1}\nabla f(y^{k+1}))\\
	\hspace{-28pt}&=\argmin_{x\in\setX}\left\{\alpha_{k+1}\left( f(y^{k+1}) + \inner{\nabla f(y^{k+1}),x-y^{k+1}} +r(x) \right) + D_{h}(x,u^{k}) \right\}.\end{align*}
	\qquad Set $x^{k+1} = \frac{\alpha_{k+1}}{A_{k+1}}u^{k+1}+\frac{A_{k}}{A_{k+1}}x^{k}$.\\
\end{algorithm}
\begin{figure}
\centering
\begin{tikzpicture}[scale=0.8]
	\filldraw (0,0) circle (1pt);
    \draw[-] (0,0) node[left] {$x^{k}$} -- (-3,6) node[left] {$u^{k}$};
	\draw (-2,4) node[left] {$\frac{A_{k}}{A_{k+1}}$};
	\filldraw (-1,2) node[left] {$y^{k+1}$} circle (1pt);
	\draw (-0.5,1) node[left] {$\frac{\alpha_{k+1}}{A_{k+1}}$};
	\filldraw (-3,6) circle (1pt);
	\draw[->] (-3,6) -- (6,6) node[above] {$u^{k+1}=u^{k}-\alpha_{k+1}\nabla f(y^{k+1})$};
	\filldraw (6,6) circle (1pt);
	\filldraw (2,2) node[right] {$x^{k+1}= y^{k+1}-\frac{1}{L_f}\nabla f(y^{k+1})$} circle (1pt);
	\draw[-] (6,6) -- (0,0);
	\draw[->] (-1,2) -- (2,2);
	%
%
%
		%
		%
		
    \end{tikzpicture}
    \caption{Illustration of the three sequences of the A-BPGM in the unconstrained case $\setX=\R^n$, $r=0$, $h=\frac{1}{2}\norm{x}_2^2$. In this simple case it is easy to see that $u^{k+1}=u^{k}-\alpha_{k+1}\nabla f(y^{k+1})$, and the sequence $u^k$ accumulates the previous gradient, while helping to keep momentum. Also by the similarity of the triangles, $x^{k+1}=y^{k+1}-\alpha_{k+1}\nabla f(y^{k+1})\cdot \frac{\alpha_{k+1}}{A_{k+1}} = y^{k+1}-\frac{1}{L_f}\nabla f(y^{k+1})$, i.e. $y^k$ is the sequence obtained by gradient descent steps. Finally, the sequence $x^{k}$ is a convex combination of the momentum step and the gradient step. The illustration is inspired by personal communication with Yu. Nesterov on the Method of Similar Triangles \cite{gasnikov2018universal,Nes18}.}
\end{figure}
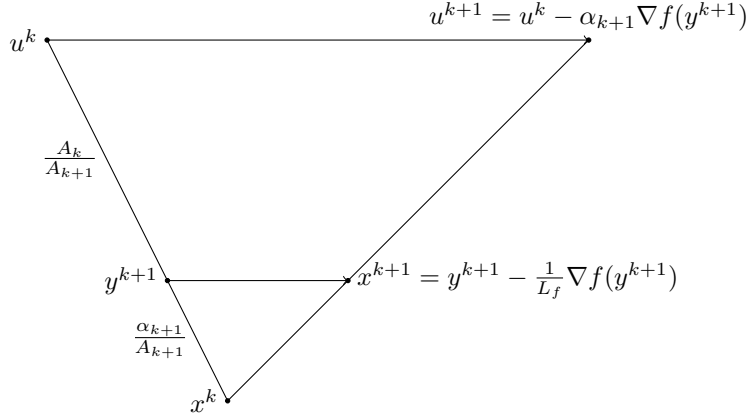

We start the analysis applying the descent Lemma property \eqref{eq:DLclassical} which holds for any two points due to $L_f$-smoothness:
\begin{align}
\label{eq:AGD_Proof_1}
\Psi(x^{k+1})=f(x^{k+1})+r(x^{k+1}) \leq f(y^{k+1})+\inner{\nabla f(y^{k+1}),x^{k+1}-y^{k+1}}+\frac{L_{f}}{2}\norm{x^{k+1}-y^{k+1}}^{2}+r(x^{k+1}).
\end{align}
Let us next consider the squared norm term. Using the definition of $x^{k+1},y^{k+1}$ and the quadratic equation for $\alpha_{k+1}$, as well as strong convexity of the Bregman divergence, i.e. \eqref{eq:Dlower}, we obtain
\begin{align}
\label{eq:AGD_Proof_2}
\frac{L_{f}}{2}\norm{x^{k+1}-y^{k+1}}^{2}&=\frac{L_{f}}{2}\norm{\frac{\alpha_{k+1}}{A_{k+1}}u^{k+1}+\frac{A_{k}}{A_{k+1}}x^{k}-\left(\frac{\alpha_{k+1}}{A_{k+1}}u^{k}+\frac{A_{k}}{A_{k+1}}x^{k}\right)}^{2} \notag \\
&= \frac{L_{f}\alpha_{k+1}^2}{2A_{k+1}^2} \norm{u^{k+1}-u^{k}}^{2} = \frac{1}{2A_{k+1}} \norm{u^{k+1}-u^{k}}^{2} \leq \frac{1}{A_{k+1}} D_h( u^{k+1},u^{k}).
\end{align}
Next, we consider the remaining terms in the r.h.s. of \eqref{eq:AGD_Proof_1}. Substituting $x^{k+1}$ and using $A_{k+1}=A_k+\alpha_{k+1}$, we obtain
\begin{align}
\label{eq:AGD_Proof_3}
f(y^{k+1})&+\inner{\nabla f(y^{k+1}),x^{k+1}-y^{k+1}}+r(x^{k+1}) \notag \\
&= \left(\frac{\alpha_{k+1}}{A_{k+1}}+ \frac{A_{k}}{A_{k+1}}\right)f(y^{k+1}) 
+\inner{\nabla f(y^{k+1}),\frac{\alpha_{k+1}}{A_{k+1}}u^{k+1}+\frac{A_{k}}{A_{k+1}}x^{k}-\left(\frac{\alpha_{k+1}}{A_{k+1}}+ \frac{A_{k}}{A_{k+1}}\right)y^{k+1}}\\
& + r\left( \frac{\alpha_{k+1}}{A_{k+1}}u^{k+1}+\frac{A_{k}}{A_{k+1}}x^{k}\right) \notag\\
&\leq \frac{A_{k}}{A_{k+1}}\left(f(y^{k+1})+\inner{\nabla f(y^{k+1}),x^{k}-y^{k+1}} + r(x^{k})\right) \\
&+ \frac{\alpha_{k+1}}{A_{k+1}}\left(f(y^{k+1})+\inner{\nabla f(y^{k+1}),u^{k+1}-y^{k+1}} + r(u^{k+1})\right) \notag \\
& \leq \frac{A_{k}}{A_{k+1}} \left(f(x^{k})+ r(x^{k})\right)+ \frac{\alpha_{k+1}}{A_{k+1}}\left(f(y^{k+1})+\inner{\nabla f(y^{k+1}),u^{k+1}-y^{k+1}} + r(u^{k+1})\right) \notag \\
& = \frac{A_{k}}{A_{k+1}} \Psi(x^{k})+ \frac{\alpha_{k+1}}{A_{k+1}}\left(f(y^{k+1})+\inner{\nabla f(y^{k+1}),u^{k+1}-y^{k+1}} + r(u^{k+1})\right),
\end{align}
where in the first inequality used the convexity of $r$, and in the second inequality we used the convexity of $f$. Now we plug \eqref{eq:AGD_Proof_2} and \eqref{eq:AGD_Proof_3} into \eqref{eq:AGD_Proof_1} to obtain
\begin{align}
\label{eq:AGD_Proof_4}
\Psi(x^{k+1}) &\leq \frac{A_{k}}{A_{k+1}} \Psi(x^{k}) + \frac{\alpha_{k+1}}{A_{k+1}} \left(f(y^{k+1})+\inner{\nabla f(y^{k+1}),u^{k+1}-y^{k+1}} + r(u^{k+1})\right) + \frac{1}{A_{k+1}} D_h( u^{k+1},u^{k})\notag \\
& = \frac{A_{k}}{A_{k+1}} \Psi(x^{k}) + \frac{1}{A_{k+1}} \left[\alpha_{k+1}\left(f(y^{k+1})+\inner{\nabla f(y^{k+1}),u^{k+1}-y^{k+1}} + r(u^{k+1})\right) + D_h( u^{k+1},u^{k}) \right].
\end{align}
Given the definition of $u^{k+1}$ as a Prox-Mapping, we can apply \eqref{eq:r} by substituting $x^{+}=u^{k+1}$, $x=u^{k}$, $\gamma = \alpha_{k+1}$. In this way, we obtain, for any $u \in \setX$,
\begin{align}
\label{eq:AGD_Proof_5}
\Psi(x^{k+1}) &\leq \frac{A_{k}}{A_{k+1}} \Psi(x^{k}) + \frac{1}{A_{k+1}} \left(\alpha_{k+1}(f(y^{k+1})+\inner{\nabla f(y^{k+1}),u^{k+1}-y^{k+1}} + r(u^{k+1})) + D_h( u^{k+1},u^{k}) \right) \notag \\
& \stackrel{\eqref{eq:r}}{\leq}  \frac{A_{k}}{A_{k+1}} \Psi(x^{k}) + \frac{1}{A_{k+1}} \left(\alpha_{k+1}(f(y^{k+1})+\inner{\nabla f(y^{k+1}),u-y^{k+1}} + r(u)) + D_h( u,u^{k}) - D_h( u,u^{k+1})  \right) \notag \\
& \leq \frac{A_{k}}{A_{k+1}} \Psi(x^{k}) + \frac{\alpha_{k+1}}{A_{k+1}} (f(u) + r(u)) + \frac{1}{A_{k+1}}D_h( u,u^{k})-\frac{1}{A_{k+1}}D_h( u,u^{k+1})  \notag \\
& = \frac{A_{k}}{A_{k+1}} \Psi(x^{k}) + \frac{\alpha_{k+1}}{A_{k+1}} \Psi(u) + \frac{1}{A_{k+1}}D_h( u,u^{k})-\frac{1}{A_{k+1}}D_h( u,u^{k+1}),
\end{align}
where we also used convexity of $f$. Multiplying both sides of the last inequality by $A_{k+1}$, summing these inequalities from $k=0$ to $k=N-1$, and using that $A_{N}-A_0=\sum_{k=0}^{N-1}\alpha_{k+1}$, we obtain
\begin{align}
\label{eq:AGD_Proof_6}
A_N \Psi(x^{N}) \leq A_0\Psi(x^{0}) + (A_{N}-A_0)\Psi(u) + D_h( u,u^{0}) - D_h( u,u^{N}).
\end{align}
Since $A_0=0$, we can choose $u=x^{\ast} \in\argmin\{D_h( u,u^{0})\vert u\in\setX^{\ast}\}\subseteq \setX^{\ast}$ and $D_h( x^{\ast},u^{N})\geq0$, so that, for all $N\geq 1$,
\begin{align}
\label{eq:AGD_Proof_7}
\Psi(x^{N}) - \Psi_{\min}(\setX) &\leq \frac{D_h( x^{\ast},u^{0})}{A_{N}}, \quad D_h( x^{\ast},u^{N}) \leq D_h( x^{\ast},u^{0}).
\end{align}
So, we see from the second inequality that the Bregman distance between the iterates $u^{N}$ and the solution $x^{\ast}$ is non-increasing. 
Then, from the inequality $D_h( x^{\ast},u^{N}) \geq \frac{1}{2}\norm{x^{\ast}-u^{N}}^2$ it follows that $\| x^{\ast}-u^{N}\|$ is bounded for any $N$, which leads to the existence of a subsequence converging to $x^{\ast}$ by the continuity of $\Psi$.
To obtain the convergence rate in terms of the objective residual it remains to estimate the sequence $A_N$ from below.

We prove by induction that $A_k \geq \frac{(k+1)^2}{4L_{f}}$. For $k=1$ this inequality holds as equality since $A_0=0$, and, hence, $A_1=\alpha_1=\frac{1}{L_{f}}$. Let us prove the induction step. From the quadratic equation $A_k+\alpha_{k+1}=L_{f}\alpha_{k+1}^2$, we have
\begin{align}
\label{eq:AGD_Proof_8}
\alpha_{k+1}=\frac{1}{2L_{f}}+\sqrt{\frac{1}{4L_{f}^2}+\frac{A_k}{L_{f}}}\geq \frac{1}{2L}+\sqrt{\frac{A_k}{L_{f}}}\geq \frac{1}{2L_{f}} +\frac{k+1}{2L_{f}} = \frac{k+2}{2L_{f}}.
\end{align}
\begin{align}
\label{eq:AGD_Proof_9}
A_{k+1}=A_k+\alpha_{k+1}\geq \frac{(k+1)^2}{4L_{f}} + \frac{k+2}{2L_{f}} = \frac{k^2+2k+1+2k+4}{4L_{f}} \geq \frac{(k+2)^2}{4L_{f}}.
\end{align}
Thus, combining \eqref{eq:AGD_Proof_9} with \eqref{eq:AGD_Proof_7}, we obtain that the A-BPGM has optimal convergence rate:
\begin{align}
\label{eq:AGD_conv_rate}
\Psi(x^{N}) - \Psi_{\min}(\setX) &\leq \frac{4L_{f}D_h( x^{\ast},u^{0})}{(N+1)^2}.
\end{align}

As it was mentioned above, accelerated gradient method in the form of A-BPGM can serve as a template meta-algorithm for many accelerated algorithms. The examples of accelerated methods which have a close form include primal-dual accelerated methods \cite{tseng2008accelerated,dvurechensky2018computational,lin2019efficient}, random coordinate descent and other randomized algorithms \cite{fercoq2015accelerated,dvurechensky2017randomized,diakonikolas2018alternating}, methods for stochastic optimization \cite{lan2012optimal,dvurechensky2018decentralize}, methods with inexact oracle  \cite{cohen2018acceleration} and inexact model of the objective \cite{gasnikov2019fast,stonyakin2020inexact}. Moreover, only using this one-projection version it was possible to obtain accelerated gradient methods with inexact model of the objective \cite{gasnikov2019fast}, accelerated decentralized distributed algorithms for stochastic convex optimization \cite{gorbunov2019optimal}, and accelerated method for stochastic optimization with heavy-tailed noise \cite{gorbunov2020stochastic}. The key to the last two results is the proof that the sequence generated by the one-projection accelerated gradient method is bounded with large probability, which, to our knowledge, is not possible to prove for other types of accelerated methods applied to stochastic optimization problems.

\subsubsection{Linear Convergence}
Under additional assumptions, we can use the scheme A-BPGM to obtain a linear convergence rate, or, in other words, logarithmic in the desired accuracy complexity bound. One such possible assumption is that $\Psi(x)$ satisfies a quadratic error bound condition for some $\mu>0$:
\begin{equation}
\label{eq:quadr_err_bound}
\Psi(x) -  \Psi_{\min}(\setX) \geq \frac{\mu}{2}\|x-x^{\ast}\|^2.
\end{equation}
This is a weaker assumption than the assumption that $\Psi(x)$ is $\mu$-strongly convex with $\mu>0$. For a review of different additional conditions which allow to obtain linear convergence rate we refer the reader to \cite{necoara2019linear,bolte2017from}. The linear convergence rate can be obtained under quadratic error bound condition by a widely used restart technique, which dates back to \cite{Nes83,nemirovskii1985optimal}, and was extended in the past 20 years to many settings including problems with non-quadratic error bound condition \cite{juditsky2014deterministic,roulet2017sharpness}, stochastic optimization problems \cite{juditsky2014deterministic,ghadimi2013optimal,dvurechensky2016stochastic,gasnikov2016stochasticInter,bayandina2018mirror}, methods with inexact oracle \cite{dvurechensky2016stochastic,gasnikov2016stochasticInter}, randomized methods \cite{allen2016optimal,fercoq2020restarting}, conditional gradient \cite{lan2013complexity,kerdreux2019restarting}, variational inequalities and saddle-point problems \cite{stonyakin2018generalized,stonyakin2020inexact}, methods for constrained optimization problems \cite{bayandina2018mirror}. 

To apply the restart technique, we make several additional assumptions. First, without loss of generality, we assume that $0\in \setX$, $0 = \arg \min_{x \in \setX} h(x)$ and $h(0)=0$. Second, we assume that we are given a starting point $x^{0} \in \setX$ and a number $R_0 >0$ such that $\| x^0 - x^{\ast} \|^2 \leq R_0^2$. Finally, we make the assumption that $h$ is bounded on the unit ball \cite{juditsky2014deterministic} in the following sense. Assume that $x^{\ast}$ is some fixed point and $x$ is such that $\| x-x^{\ast} \|^2 \leq R^2$, then
\begin{equation}
\label{eq:h_bounded}
    h\Big( \frac{x-x^{\ast}}{R} \Big)\leq \frac{\Omega}{2},
\end{equation}
where $\Omega$ is some known number. For example, in the Euclidean setup $\Omega=1$, and other examples are given in \cite[Section 2.3]{juditsky2014deterministic}, where typically $\Omega=O(\ln n)$. 

\begin{algorithm}{\bf{The Restarted Accelerated Bregman Proximal Gradient Method} (R-A-BPGM)}
\\
	{\bf Input:} $z^{0}\in\dom(r)\cap\setX^{\circ}$ such that $\| z^0 - x^{\ast} \|^2 \leq R_0^2$, $\Omega,L_{f},\mu$.\\
	{\bf General step:} For $p=0,1,\ldots$ do:\\
	\qquad Make $N=\left\lceil 2\sqrt{\frac{\Omega L_f}{\mu}}\right\rceil-1$ steps of A-BPGM with starting point $x^{0}=z^{p}$ and proximal setup given by distance-generating function $h_p(x)=R_p^2 h\left(\frac{x-z^{p}}{R_p}\right)$, where $R_p:=R_{p-1}/2=R_0\cdot 2^{-p}$. \\
	\qquad Set $z^{p+1}=x^N$.\\
\end{algorithm}
We next use the above assumptions to show the accelerated logarithmic complexity of R-A-BPGM, i.e. that the number of Bregman proximal steps to find a point $\hat{x}$ such that $f(\hat{x})-f(x^{\ast})\leq \varepsilon$ is proportional to $\sqrt{L_f/\mu}\log_2(1/\varepsilon)$ instead of $(L_f/\mu)\log_2(1/\varepsilon)$ for the BPGM under the error bound condition.
The idea of the proof is to show by induction that, for all $p\geq 0$, $\|z^p-x^{\ast}\|^2\leq R_p^2$. For $p=0$ this holds by the assumption on $z^0$ and $R_0$. So, next we prove an induction step from $p-1$ to $p$.
Using the definition of $h_{p-1}$, assumptions about $h$, and the inductive assumption, we have
\begin{equation}
\label{eq:AGD_SC_Proof_1}
	D_{h_{p-1}}(x^{\ast},z^{p-1}) 
	\leq  h_{p-1}(x^{\ast})=R_{p-1}^2h\Big( \frac{z^{p-1}-x^{\ast}}{R_{p-1}} \Big)\stackrel{\eqref{eq:h_bounded}}{\leq}\frac{\Omega R_{p-1}^2}{2}.
\end{equation}
Thus, applying the error bound condition \eqref{eq:quadr_err_bound}, the bound \eqref{eq:AGD_conv_rate} and our choice of the number of steps $N$, we obtain
\begin{align*}
\frac{\mu}{2}\|z^p-x^{\ast}\|^2 & \stackrel{\eqref{eq:quadr_err_bound}}{\leq}\Psi(z^p) - \Psi_{\min}(\setX) =  \Psi(x^{N}) - \Psi_{\min}(\setX)
\stackrel{\eqref{eq:AGD_conv_rate}}{\leq} \frac{L_{f}D_{h_{p-1}}( x^{\ast},z^{p-1})}{(N+1)^2} 
\stackrel{\eqref{eq:AGD_SC_Proof_1}}{\leq} \frac{L_{f}\Omega R_{p-1}^2}{2 (N+1)^2}\\
&\leq \frac{ \mu R_{p-1}^2}{8} = \frac{ \mu R_{p}^2}{2}.
\end{align*}
So, we obtain that $\|z^p-x^{\ast}\| \leq R_p=R_0 \cdot 2^{-p}$ and $\Psi(z^p) - \Psi_{\min}(\setX) \leq \frac{ \mu R_0^2 \cdot 2^{-2p}}{2}$. To estimate the total number of basic steps of A-BPGM to achieve $\Psi(z^p) - \Psi_{\min}(\setX) \leq \varepsilon$, we need to multiply the sufficient number of restarts $\hat{p} = \left\lceil  \frac{1}{2}\log_2 \frac{\mu R_0^2}{2\varepsilon}\right\rceil$ by the number of A-BPGM steps $N$ in each restart. This leads to the complexity estimate 
$O\left(\sqrt{\frac{\Omega L_f}{ \mu}}\log_2 \frac{\mu R_0^2}{\varepsilon} \right)$ which is optimal \cite{NY83,Nes18} for first-order methods applied to smooth strongly convex optimization problems. 

A possible drawback of the restart scheme is that one has to know an estimate $R_0$ for $\| z^0 - x^{\ast} \|$. It is possible to avoid this by directly incorporating the parameter $\mu$ into the steps of A-BPGM, see e.g. \cite{devolder2013exactness,Nes18,lan2020first,stonyakin2020inexact}. Yet, in this case, a stronger assumption that $\Psi(x)$ is strongly convex or relatively strongly convex \cite{LuFreNes18} is used. The second drawback of both approaches: restart technique and direct incorporation of $\mu$ into the steps, is that they require to know the value of the parameter $\mu$. This is in contrast to non-accelerated BPGM, which using the same step-size as in the non-strongly convex case automatically has linear convergence rate and complexity $O\left({\frac{L_f}{\mu}}\log_2 \frac{\mu R_0^2}{\varepsilon} \right)$, see e.g. \cite{stonyakin2019gradient,stonyakin2020inexact}. Several recipes on how to restart accelerated methods with only rough estimates of the parameter $\mu$ are proposed in \cite{fercoq2020restarting}.

\subsection{Smooth minimization of non-smooth functions}
An important observation made during the last 20 years of development of first-order methods for convex programming is that there is a large gap between the optimal convergence rate for black-box non-smooth optimization problems, i.e. $O(1/\sqrt{N})$ and the optimal convergence rate for black-box smooth optimization problems, i.e. $O(1/N^2)$. For the second observation, let us make a thought experiment. Assume that we minimize a smooth function by $N$ steps of A-BPGM, i.e. solve problem \eqref{eq:Opt} with $r=0$. Then in each iteration we observe first-order information $(f(y^{k+1}), \nabla f(y^{k+1}))$ and can construct a \textit{non-smooth} piecewise linear approximation of $f$ as $g(x)=\max_{k=1,...,N}\{f(y^{k+1}) + \inner{\nabla f(y^{k+1}),x-y^{k+1}}$. If we now make $N$ steps of A-BPGM with the same starting point to minimize $g(x)$, and choose the appropriate subgradients of $g(\cdot)$, the steps will be absolutely the same as when we minimized $f(x)$, and we will be able to minimize a \textit{non-smooth} function $g$ with much faster rate $1/N^2$ than the lower bound $1/\sqrt{N}$. This leads to an idea of trying to find a sufficiently wide class of non-smooth functions which can be efficiently minimized by A-BPGM. 

To do this, one needs to look into the black-box and use the structure of a non-smooth problem to obtain faster convergence rates. The result is known as \emph{Nesterov's smoothing technique} \cite{Nes05}, a powerful tool we are about to describe now. 

Consider the model problem \eqref{eq:Opt}, with the added assumption that the non-smooth part admits a Fenchel representation of the form 
\begin{equation}
r(x)=\max_{w\in\setW}\{\inner{\bA x,w}-\kappa(w)\}.
\end{equation}
Here, $\setW\subseteq\setE$ is a compact convex subset of a finite-dimensional real vector space $\setE$, and $\kappa:\setW\to\R$ is a continuous convex function on $\setW$. $\bA$ is a linear operator from $\setV$ to $\setE^{\ast}$. This additional structure of the problem gives rise to a min-max formulation of \eqref{eq:Opt}, given by 
\begin{equation}
    \label{eq:Sm-ng-Problem}
  \min_{x\in \setX}\max_{w\in\setW} \{f(x) + \inner{\bA x,w} - \kappa(w) \}.  
\end{equation}

The main idea of Nesterov is based on the observation that the function $r$ can be well approximated by a class of smooth convex functions, defined as follows. Let $h_{w}\in\scrH_{1}(\setW)$ with a nonrestrictive assumptions that $\min_{w \in \setW}h_w(w)=0$, and for some $\tau>0$, define the function 
\begin{equation}
    \label{eq:Sm-ng-Smoothed-func}
\Psi_{\tau}(x):= f(x) + \max_{ w \in \setW} \{ \inner{\bA x,w} - \kappa(w) - \tau h_w(w) \}. 
\end{equation}
We denote by $\widehat{w}_{\tau}(x)$ the optimal solution of the maximization problem for a fixed $x$. The main technical lemma, which leads to the main result is as follows.
\begin{proposition}[\cite{Nes05}]
\label{Th:Sm-ng-Diff-f-mu}
The function $\Psi_{\tau}(x)$ is well defined, convex and continuously differentiable at any $x\in \setX$ with $\nabla \Psi_{\tau}(x) = \nabla f(x) + \bA^{\ast} \widehat{w}_{\tau}(x)$. Moreover, $\nabla \Psi_{\tau}(x)$ is Lipschitz continuous with constant $L_{\tau} = L_f + \frac{\|\bA\|_{\setV,\setE}^2}{\tau}$.
\end{proposition}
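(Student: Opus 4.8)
The plan is to recognize the inner maximization as a convex conjugate and then invoke Proposition \ref{prop:Rock}. First I would collect the $w$-dependent terms into the function
\[
q_{\tau}(w):=\kappa(w)+\tau h_{w}(w)+\delta_{\setW}(w),
\]
so that the smoothing term becomes
\[
g_{\tau}(x):=\max_{w\in\setW}\{\inner{\bA x,w}-\kappa(w)-\tau h_{w}(w)\}=\sup_{w\in\setE}\{\inner{\bA x,w}-q_{\tau}(w)\}=q_{\tau}^{\ast}(\bA x),
\]
and $\Psi_{\tau}=f+g_{\tau}$. Since $h_{w}\in\scrH_{1}(\setW)$, the map $\tau h_{w}$ is $\tau$-strongly convex, and adding the convex $\kappa$ and the indicator $\delta_{\setW}$ keeps $q_{\tau}$ proper, lower semi-continuous and $\tau$-strongly convex. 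Because $\setW$ is compact and the objective is continuous and strongly concave in $w$, the maximum defining $g_{\tau}(x)$ is attained at a unique point, which is precisely $\widehat{w}_{\tau}(x)$; this establishes that $\Psi_{\tau}$ is well defined.

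Next I would read off differentiability and the gradient formula. Applying Proposition \ref{prop:Rock} to the $\tau$-strongly convex function $q_{\tau}$ gives that $q_{\tau}^{\ast}$ is finite and differentiable on all of $\setE^{\ast}$, with $\nabla q_{\tau}^{\ast}(y)=\argmax_{w}\{\inner{y,w}-q_{\tau}(w)\}$ single-valued and $\tfrac{1}{\tau}$-Lipschitz, i.e.
\[
\norm{\nabla q_{\tau}^{\ast}(y_{1})-\nabla q_{\tau}^{\ast}(y_{2})}\leq \tfrac{1}{\tau}\norm{y_{1}-y_{2}}_{\ast}\qquad\forall y_{1},y_{2}\in\setE^{\ast}.
\]
In particular $\nabla q_{\tau}^{\ast}(\bA x)=\widehat{w}_{\tau}(x)$. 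Since $g_{\tau}=q_{\tau}^{\ast}\circ\bA$ is the composition of a differentiable convex function with a bounded linear map, it is convex (as a supremum of affine functions of $x$) and differentiable, and the chain rule yields $\nabla g_{\tau}(x)=\bA^{\ast}\nabla q_{\tau}^{\ast}(\bA x)=\bA^{\ast}\widehat{w}_{\tau}(x)$. Adding the convex, differentiable $f$ then gives the asserted formula $\nabla\Psi_{\tau}(x)=\nabla f(x)+\bA^{\ast}\widehat{w}_{\tau}(x)$.

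For the Lipschitz estimate I would bound the two contributions separately by the triangle inequality. The term $\nabla f$ contributes $L_{f}\norm{x-x'}$ by $L_{f}$-smoothness. For the second term, using $\norm{\bA^{\ast}u}_{\ast}\leq\norm{\bA}_{\setV,\setE}\norm{u}$ (recall $\norm{\bA}_{\setV,\setE}=\norm{\bA^{\ast}}_{\setE,\setV}$), the $\tfrac1\tau$-Lipschitz property above, and $\norm{\bA x-\bA x'}_{\ast}\leq\norm{\bA}_{\setV,\setE}\norm{x-x'}$, I obtain
\[
\norm{\bA^{\ast}(\widehat{w}_{\tau}(x)-\widehat{w}_{\tau}(x'))}_{\ast}\leq \norm{\bA}_{\setV,\setE}\cdot\tfrac{1}{\tau}\norm{\bA x-\bA x'}_{\ast}\leq \frac{\norm{\bA}_{\setV,\setE}^{2}}{\tau}\norm{x-x'}.
\]
Summing the two bounds gives exactly $L_{\tau}=L_{f}+\norm{\bA}_{\setV,\setE}^{2}/\tau$.

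The only genuinely delicate points, which I would treat with care, are the bookkeeping of the primal/dual norms on $\setV$ and $\setE$ (so that the operator norm enters with the correct square) and justifying the chain rule; the latter is routine here precisely because Proposition \ref{prop:Rock} guarantees $q_{\tau}^{\ast}$ is differentiable \emph{everywhere}, so $g_{\tau}=q_{\tau}^{\ast}\circ\bA$ is differentiable by the standard finite-dimensional chain rule. If one preferred to avoid Proposition \ref{prop:Rock}, the same Lipschitz-in-$x$ bound on $\widehat{w}_{\tau}$ follows directly from strong concavity: writing the two optimality inequalities at $x$ and $x'$ and adding them yields $\inner{\bA(x-x'),\widehat{w}_{\tau}(x)-\widehat{w}_{\tau}(x')}\geq\tau\norm{\widehat{w}_{\tau}(x)-\widehat{w}_{\tau}(x')}^{2}$, whence the Cauchy--Schwarz inequality gives the Lipschitz constant $\norm{\bA}_{\setV,\setE}/\tau$, and differentiability with the stated gradient follows from Danskin's theorem thanks to the uniqueness of the maximizer.
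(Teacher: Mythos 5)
Your proposal is correct, but note that the paper itself does not prove this statement: it imports it verbatim from Nesterov's smoothing paper \cite{Nes05}, so there is no in-paper proof to match. Your primary route — writing the smoothing term as $q_{\tau}^{\ast}(\bA x)$ with $q_{\tau}=\kappa+\tau h_{w}+\delta_{\setW}$ and invoking Proposition \ref{prop:Rock} to get everywhere-differentiability of $q_{\tau}^{\ast}$ and the $\tfrac{1}{\tau}$-Lipschitz property of $\nabla q_{\tau}^{\ast}=(\partial q_{\tau})^{-1}$ — is a clean conjugate-duality packaging that has the virtue of being self-contained within the survey's own toolkit (it is the same mechanism the paper uses for the mirror map $Q_{\beta,\gamma}$ in the dual averaging section), and your norm bookkeeping ($\nabla q_{\tau}^{\ast}\colon\setE^{\ast}\to\setE$, $\|\bA\|_{\setV,\setE}=\|\bA^{\ast}\|_{\setE,\setV}$) correctly produces the squared operator norm. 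Nesterov's original argument is instead exactly your closing "alternative": uniqueness of the maximizer from strong concavity gives differentiability via a Danskin-type theorem, and adding the two first-order optimality inequalities at $x$ and $x'$ gives the strong monotonicity estimate $\inner{\bA(x-x'),\widehat{w}_{\tau}(x)-\widehat{w}_{\tau}(x')}\geq\tau\norm{\widehat{w}_{\tau}(x)-\widehat{w}_{\tau}(x')}^{2}$, hence the $\norm{\bA}_{\setV,\setE}/\tau$ Lipschitz bound on $\widehat{w}_{\tau}$. The two routes are equivalent in substance (strong convexity of the dual regularizer is the engine in both); the conjugate route avoids having to state and justify Danskin's theorem, while the optimality-condition route is more elementary and needs no facts about conjugates — either would be an acceptable self-contained proof to splice into the survey.
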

Here the adjoint operator $\bA^{\ast}$ is defined by equality $\inner{\bA x,w}_{\setE}=\inner{\bA^{\ast}w,x}_{\setV}$ and the norm of the operator $\|\bA\|_{\setV,\setE}$ is defined by $\|\bA\|_{\setV,\setE} = \max_{x,w}\{\inner{\bA x,w}:\|x\|_{\setV} = 1, \|w\|_{\setE} = 1 \}$.
Since $\setW$ is bounded, $\Psi_{\tau}(x)$ is a uniform approximation for the function $\Psi$, namely, for all $x \in \setX$,
\begin{equation}
\label{eq:Sm-ng-Unif-Appr}
    \Psi_{\tau}(x) \leq \Psi(x) \leq \Psi_{\tau}(x) + \tau D_{\setW},
\end{equation}
where $D_{\setW}:= \max \{h_w(w) \vert w \in \setW \}$, assumed to be a finite number. Then, the idea is to choose $\tau$ sufficiently small and apply accelerated gradient method to minimize $\Psi_{\tau}(x)$ on $\setX$ with a DGF $h_{x}\in\scrH_{1}(\setX)$. Doing this, and assuming that $D_{\setX}=\max\{h_{x}(u)\vert u\in\setX\}<\infty$, we can apply the result \eqref{eq:AGD_conv_rate} to $\Psi_{\tau}(x)$ and, using \eqref{eq:Sm-ng-Unif-Appr}, to obtain
\begin{align*}
 0 \leq \Psi(x^N) - \Psi_{\min}(\setX)  & \leq  \Psi_{\tau}(x^N) + \tau D_{\setW} - \Psi_{\tau}(x^*) \leq  \Psi_{\tau}(x^N) + \tau D_{\setW} - \Psi_{\tau}(x_{\tau}^*) \leq  \tau D_{\setW} + \frac{4L_{\tau}D_{\setX}}{(N+1)^2}
\\ 
& = \tau D_{\setW} + \frac{4\|\bA\|_{\setV,\setE}^2D_{\setX}}{\tau(N+1)^2}+ \frac{4L_fD_{\setX}}{(N+1)^2}.
\end{align*}
Choosing $\tau$ to minimize the r.h.s., i.e. $\tau = \frac{2\|\bA\|_{\setV,\setE}}{N+1} \sqrt{\frac{D_{\setX}}{D_{\setW}}}$, we obtain
\begin{equation}
    0 \leq \Psi(x^N) - \Psi_{\min}(\setX) \leq \frac{4\|\bA\|_{\setV,\setE} \sqrt{D_{\setX}D_{\setW}}}{N+1} + \frac{4L_fD_{\setX}}{(N+1)^2}.
\end{equation}

A more careful analysis in the proof of \cite[Theorem 3]{Nes05}, allows also to obtain an approximate solution to the conjugate problem 
\begin{equation}
  \max_{ w \in \setW}  \{ \psi(w) := - \kappa(w) + \min_{x\in \setX} \left( \inner{\bA x,w} + f(x)  \right)\}. 
\end{equation}
In each iteration of A-BPGM, the optimizer needs to calculate $\nabla \Psi_{\tau}(y^{k+1})$, which requires to calculate $\widehat{w}_{\tau}(y^{k+1})$. This information is aggregated to obtain the vector $\widehat{w}^{N} = \sum_{k=0}^{N-1} \frac{\alpha_{k+1}}{A_{k+1}}\widehat{w}_{\tau}(y^{k+1})$ and is used to obtain the following  primal-dual result 
\begin{equation}
\label{eq:Sm-ng-final-Conve-Rate}
    0 \leq \Psi(x^N) - \Psi_{\min}(\setX) \leq \Psi(x^N) - \psi(\widehat{w}^{N})  \leq \frac{4\|\bA\|_{\setV,\setE} \sqrt{D_{\setX}D_{\setW}}}{N+1} + \frac{4L_fD_{\setX}}{(N+1)^2}.
\end{equation}
In both cases using the special structure of the problem it is possible to obtain convergence rate $O(1/N)$ for non-smooth optimization, which is better than the lower bound $O(1/\sqrt{N})$ for general non-smooth optimization problems.

We illustrate the smoothing technique by two examples of piecewise-linear minimization.

\begin{example}[Uniform fit]
Consider the problem of finding a uniform fit of some signal $b\in\setE$, given linear observations $\bA x$. where $\bA:\setV\to\setE$ is a bounded linear operator. This problem amount to minimize the non-smooth function $\norm{\bA x-b}_{\infty}$. Of course, this problem can be equivalently formulated as an LP, however in case where the dimensionality of the parameter vector $x$ is large, such a direct approach could turn out to be not very practical. Adopting the just introduced smoothing technology, the representation \eqref{eq:Sm-ng-Problem} can be obtained using the definition of the dual norm $\norm{\cdot}_{1}$, i.e. $\norm{\bA x-b}_{\infty} = \max_{w:\norm{w}_1 \leq 1} \inner{\bA x-b,w}$. Yet, a better representation is obtained using the unit simplex $\setW=\{w\in\R^{2m}_{+}\vert \sum_{i=1}^{2m}w_{i}=1\}$, matrix $\hat{\bA}=[\bA;-\bA]$, and vector $\hat{b}=[b;-b]$. For the set $\setW$, a natural Bregman setup is the norm $\norm{w}_{\setE}=\norm{w}_1$ and the Boltzmann-Shannon entropy $h_w(w)=\ln 2m + \sum_{i=1}^m w_i\ln w_i$. This gives
\[
\Psi_{\tau}(x) = \max_{ w \in \setW} \{ \inner{\hat{\bA}x-\hat{b},w}- \tau h_w(w) \} = \tau \ln \left(\frac{1}{2m}\sum_{i=1}^m \exp\left(\frac{\inner{a_i,x}-b_i}{\tau}\right) + \exp\left(-\frac{\inner{a_i,x}-b_i}{\tau}\right) \right), 
\]
which is recognized as a softmax function.
\end{example}
\begin{example}[$\ell_{1}$-fit]
In compressed sensing \cite{CanRomTao06,Don06,CanTao07} one encounters the problem to minimize the $\ell_{1}$ norm of the residual vector $\bA x-b$ over a given closed convex set $\setX$. While it is well-known that this problem can in principle again be reformulated as an LP, the typical high-dimensionality of such problems makes this direct approach often not practicable. Adopting the smoothing technology, it is natural to choose $\setW=\{w\in\R^{m}\vert \|w\|_{\infty}\leq 1\}$ and $h_w(w)=\frac{1}{2}\sum_{i=1}^m\|a_i\|_{\setE,\ast}w_i^2$, which gives
\[
\Psi_{\tau}(x) = \max_{ w \in \setW} \{ \inner{{\bA}x-{b},w}- \tau h_w(w) \} =  \sum_{i=1}^m\|a_i\|_{x,\ast} \psi_{\tau}\left(\frac{|\inner{a_i,x}-b_i|}{\|a_i\|_{\setE,\ast}}\right),
\]
where $\psi_{\tau}(t)$ is the Huber function equal to $t^2/(2\tau)$ for $0 \leq t \leq \tau$ and $t-\tau/2$ if $t \geq \tau$.

For the particular case of smoothing the absolute value function $|x|$, Figure \ref{Fig:smoothing} gives the plot of the original function, its softmax smoothing and Huber smoothing, both with $\tau=1$. Potentially, other ways of smoothing a non-smooth function can be applied, see \cite{beck2012smoothing} for a general framework.
\end{example}

\begin{minipage}[t]{0.48\linewidth}
		\includegraphics[width=0.9\linewidth]{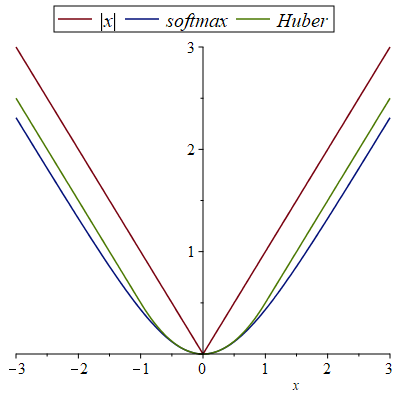}
	\captionof{figure}{Absolute value function $|x|$, its softmax smoothing and Huber smoothing, both with $\tau=1$.}
	\label{Fig:smoothing}
\end{minipage}
\vspace{2em}
\begin{minipage}[t]{0.48\linewidth}
		\includegraphics[width=0.9\linewidth]{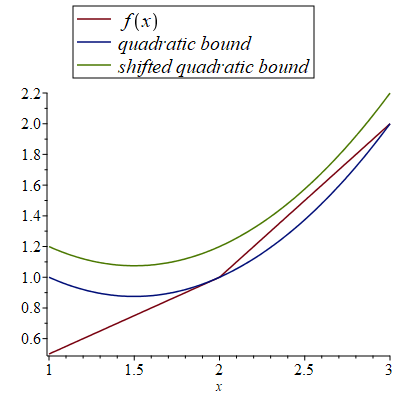}
	\captionof{figure}{Non-smooth function $f(x) = \max\{x-1,x/2\}$, a quadratic function constructed using the first-order information at the point $x=2$, and a shifted quadratic function constructed using the first-order information at the point $x=2$. As one can see, adding a shift allows to obtain an upper quadratic bound for the objective, which is then minimized to obtain a new test point.}
	\label{Fig:universal}
\end{minipage}

\paragraph{Closing Remarks}
Let us make several remarks on the related literature. A close approach is proposed in \cite{Nem04}, where the problem \eqref{eq:Sm-ng-Problem} is considered directly as a min-max saddle-point problem. These classes of equilibrium problems are typically solved via tools from monotone variational inequalities, whose performance is typically worse than the performance of optimization algorithms. In particular, contrasting the above rate estimate with the one reported in \cite{Nem04}, one observes that the bound in \cite{Nem04} has a similar to \eqref{eq:Sm-ng-final-Conve-Rate}  structure, yet with the second term being non-accelerated, i.e. proportional to $1/N$. This approach was generalized to obtain an accelerated method for a special class of variational inequalities in \cite{CheLanOuy17}, where an optimal iteration complexity $O(L/\sqrt{\eps})$ to reach an $\eps$-close solution is reported. In the original paper \cite{Nes05}, the smoothing parameter is fixed and requires to know the parameters of the problem in advance. This has been improved in \cite{nesterov2005excessive}, where an adaptive version of the smoothing techniques is proposed. This framework was extended in \cite{tran-dinh2014constrained,tran-dinh2015smooth,alacaoglu2017smooth,tran-dinh2020adaptive} for structured composite optimization problems in the form \eqref{eq:structured} and a related primal-dual representation \eqref{eq:saddle}. A related line of works studies minimization of strongly convex functions under linear constraints. Similarly to \eqref{eq:Sm-ng-Smoothed-func} the objective in the Lagrange dual problem has Lipschitz gradient, yet the challenge is that the feasible set in the dual problem is not bounded. Despite that it is possible to obtain accelerated primal-dual methods  
\cite{tran-dinh2014constrained,tran-dinh2015smooth,chernov2016fast,dvurechensky2016primal-dual,anikin2017dual,dvurechensky2018computational,dvurechensky2020stable,guminov2019accelerated,guminov2019acceleratedAM,kroshnin2019complexity,nesterov2020primal-dual,ivanova2020composite}.
In particular, this allows to obtain improved complexity bounds for different types of optimal transport problems 
\cite{dvurechensky2018computational,lin2019efficient,kroshnin2019complexity,dvurechensky2018decentralize,uribe2018distributed,2019arXiv191000152L,2020arXiv200204783L,tupitsa2020multimarginal,krawtschenko2020distributed}.

\subsection{Universal Accelerated Method}
\label{sec:Universal}
As it was discussed in the previous subsection, there is a gap in the convergence rate between the class of non-smooth convex optimization problems and the class of smooth convex optimization problems. In this subsection, we present a unifying framework \cite{Nes15} for these two classes which allows to obtain uniformly optimal complexity bounds for both classes by a single method without the need to know whether the objective is smooth or non-smooth. To do that, consider the Problem \eqref{eq:Opt} with $f$ which belongs to the class of functions with H\"older-continuous subgradients, i.e. for some $L_{\nu} >0 $ and $\nu \in [0,1]$ it holds that $\norm{\nabla f(x) - \nabla f(y)}_* \leq L_{\nu} \norm{x-y}^{\nu}$ for all $x,y \in \dom f$. If $\nu=1$, we recover the $L_f$-smoothness condition \eqref{eq:fsmooth}. If $\nu=0$ we have that $f$ has bounded variation of the subgradient, which is essentially equivalent to the bounded subgradient Assumption \ref{ass:BoundedSubgrad}. The main observation \cite{devolder2014first,Nes15} is that this H\"older condition allows to prove an inexact version of the "descent Lemma" inequality \eqref{eq:DLclassical}. More precisely \cite[Lemma 2]{Nes15}, for any $x, y \in \dom f$ and any $\delta >0$, 
\begin{equation}
\label{eq:Holder_upper_bound}
f(y)\leq f(x)+\inner{\nabla f(x),y-x}+\frac{L_{\nu}}{1+\nu}\norm{y-x}^{1+\nu} \leq f(x)+\inner{\nabla f(x),y-x}+\frac{L}{2}\norm{y-x}^{2} +\delta,
\end{equation}
where 
\begin{equation}
\label{eq:L_delta}
L \geq L(\delta) := \left(\frac{1-\nu}{1+\nu}\frac{1}{\delta} \right)^{\frac{1-\nu}{1+\nu}} L_{\nu}^{\frac{2}{1+\nu}}
\end{equation}
with the convention that $0^0=1$. We illustrate this by Figure~\ref{Fig:universal} where we plot a quadratic bound in the r.h.s. of \eqref{eq:Holder_upper_bound} with $\delta=0$ and a shifted quadratic bound in the r.h.s. of \eqref{eq:Holder_upper_bound} with some $\delta >0$. The first quadratic bound can not be an upper bound for $f(y)$ for any $L >0$, and the positive shift allows to construct an upper bound. Thus, it is sufficient to equip the A-BPGM with a backtracking line-search to obtain a universal method.

\begin{algorithm}{\bf{The Universal Accelerated Bregman Proximal Gradient Method} (U-A-BPGM)}
\\
	{\bf Input:} Pick $x^{0}=u^{0}=y^{0}\in\dom(r)\cap\setX^{\circ}$, $\varepsilon >0$, $0< L_0 < L(\varepsilon/2)$, set $A_0=0$\\
	{\bf General step:} For $k=0,1,\ldots$ do:\\
	\qquad Find the smallest integer $i_k \geq 0$ such that if one defines $\alpha_{k+1}$ from quadratic equation $A_k+\alpha_{k+1}=2^{i_k-1}L_k\alpha_{k+1}^2$, sets $A_{k+1}=A_k+\alpha_{k+1}$, \\
	\qquad sets $y^{k+1} = \frac{\alpha_{k+1}}{A_{k+1}}u^{k}+\frac{A_{k}}{A_{k+1}}x^{k}$,\\
	\qquad sets $u^{k+1}=\argmin_{x\in\setX}\left\{\alpha_{k+1}\left( f(y^{k+1}) + \inner{\nabla f(y^{k+1}),x-y^{k+1}} +r(x) \right) + D_{h}(x,u^{k}) \right\},$ \\
	\qquad sets $x^{k+1} = \frac{\alpha_{k+1}}{A_{k+1}}u^{k+1}+\frac{A_{k}}{A_{k+1}}x^{k}$,\\
	\qquad then it holds that $f(x^{k+1})\leq f(y^{k+1})+\inner{\nabla f(y^{k+1}),x^{k+1}-y^{k+1}}+\frac{2^{i_k-1}L_k}{2}\norm{x^{k+1}-y^{k+1}}^{2} +\frac{\varepsilon \alpha_{k+1}}{2A_{k+1}}$.\\
	\qquad Set $L_{k+1}=2^{i_k-1}L_k$ and go to the next iterate $k$.
\end{algorithm}
We first observe that for sufficiently large $i_k$,  $2^{i_k-1}L_k \geq L\left(\frac{\varepsilon \alpha_{k+1}}{2A_{k+1}}\right)$, see \cite[p.396]{Nes15}. This means that the process of finding $i_k$ is finite since the condition which is checked for each $i_k$ is essentially \eqref{eq:Holder_upper_bound} with $\delta= \frac{\varepsilon \alpha_{k+1}}{2A_{k+1}}$. Further, the convergence proof follows the same steps as the proof of the convergence rate for A-BPGM. The first thing which is changed is equation \eqref{eq:AGD_Proof_1}, where now the inexact descent Lemma is used instead of the exact one. The only difference is that $L_f$ is changed to its local approximation $L_{k+1}$ and add the error term $\frac{\varepsilon \alpha_{k+1}}{2A_{k+1}}$ appears in the r.h.s. In \eqref{eq:AGD_Proof_2} the new quadratic equation with $L_{k+1}$ is used and the inequality remains the same. This eventually leads to \eqref{eq:AGD_Proof_5} with the only change being an additive error term $\frac{\varepsilon \alpha_{k+1}}{2A_{k+1}}$ in the r.h.s. Finally, this leads to the bound  
\[
\Psi(x^{N}) - \Psi_{\min}(\setX) \leq \frac{D_h(u^{\ast},u^0)}{A_{N}} + \frac{\varepsilon}{2}.
\]
After some algebraic manipulation, Nesterov \cite[p.397]{Nes15} obtains an inequality $A_N \geq \frac{N^{\frac{1+3\nu}{1+\nu}}\varepsilon^{\frac{1-\nu}{1+\nu}}}{2^{\frac{2+4\nu}{1+\nu}}L_{\nu}^{\frac{2}{1+\nu}}}$. Substituting, we obtain
\[
\Psi(x^{N}) - \Psi_{\min}(\setX) \leq \frac{2^{\frac{2+4\nu}{1+\nu}}D_h(u^{\ast},u^0)L_{\nu}^{\frac{2}{1+\nu}}}{N^{\frac{1+3\nu}{1+\nu}}\varepsilon^{\frac{1-\nu}{1+\nu}}} + \frac{\varepsilon}{2}.
\]
Since the method does not require to know $\nu$ and $L_{\nu}$, the iteration complexity to achieve accuracy $\varepsilon$ is
\[
N=O\left( \inf_{\nu \in [0,1]} \left(\frac{L_{\nu}}{\varepsilon}\right)^{\frac{2}{1+3\nu}} \left(D_h(u^{\ast},u^0)\right)^{\frac{1+\nu}{1+3\nu}} \right).
\]
It is easy to see that the oracle complexity, i.e. the number of proximal operations, is approximately the same. Indeed, the number of oracle calls for each $k$ is $2(i_k+1)$. Further, $L_{k+1}=2^{i_k-1}L_k$, which means that the total number of the oracle calls up to iteration $N$ is 
$\sum_{k=0}^{N-1} 2(i_k+1) = \sum_{k=0}^{N-1} 2(2 \log_2\frac{L_{k+1}}{L_k}) = 4N + 2\log_2\frac{L_{N}}{L_0}$, i.e. is, up to a logarithmic term, four times larger than $N$. The obtained oracle complexity coincides up to a constant factor with the lower bound  \cite{NY83} for first-order methods applied to minimization of functions with H\"older-continuous gradients. In the particular case $\nu=0$, we obtain the complexity $O\left(\frac{L_0^2D_h(u^{\ast},u^0)}{\varepsilon^2} \right)$, which corresponds to the convergence rate $1/\sqrt{k}$, which is typical for general non-smooth minimization. In the opposite case of smooth minimization corresponding to $\nu=1$, we obtain the complexity $O\left(\sqrt{\frac{L_1D_h(u^{\ast},u^0)}{\varepsilon}} \right)$, which corresponds to the optimal convergence rate $1/k^2$. The same idea can be used to obtain universal version of the BPGM method \cite{Nes15}. One can also use the strong convexity assumption to obtain faster convergence rate of the U-A-BPGM either by restarts \cite{roulet2017sharpness,kamzolov2020universal}, or by incorporating the strong convexity parameter in the steps \cite{stonyakin2020inexact}. The same backtracking line-search can be applied in a much simpler way if one knows that $f$ is $L_f$-smooth with some unknown Lipschitz constant or to achieve acceleration in practice caused by a pessimistic estimate for $L_f$ \cite{Nes13,tran-dinh2015smooth,dvurechensky2016primal-dual,dvurechensky2018computational,malitsky2018first-order,dvinskikh2020line-search,dvurechensky2020stable}. The idea is to use standard exact "descent Lemma" inequality in each step of the accelerated method.

The idea of universal methods turned out to be very productive and several extensions has been proposed in the literature including universal primal-dual method for composite optimization \cite{baimurzina2019universal}, universal primal-dual method \cite{yurtsever2015universal} for problems with linear constraints and problems in the form \eqref{eq:structured}, universal method for convex and non-convex optimization \cite{ghadimi2019generalized}, a universal primal-dual hybrid of accelerated gradient method with conjugate gradient method using additional one-dimensional minimization \cite{nesterov2020primal-dual}. Extensions are also known for first-order methods for variational inequalities and saddle-point problems \cite{stonyakin2018generalized}. The above-described method is not the only way to obtain adaptive and universal methods for smooth and non-smooth optimization problems. An alternative way which uses the norm of the current (sub)gradient to define the step-size  was initiated probably by \cite{Pol87} and became very popular in stochastic optimization for machine learning after the paper \cite{duchi2011adaptive}. On this avenue it was possible to obtain for $\nu\in\{0,1\}$  universal accelerated optimization method \cite{levy2018online} and universal methods for variational inequalities and saddle-point problems \cite{bach2019universal,antonakopoulos2020adaptive}.


\subsection{Connection between Accelerated method and Conditional Gradient}
In this subsection we describe how a variant of conditional gradient method can be obtained as a particular case of A-BPGM with inexact Bregman Proximal step.
Since we consider conditional gradient method it is natural to assume that the set $\setX$ is bounded with $\max_{x,u \in \setX}D_h(x,u) \leq D_{\setX}$.
We follow the idea of \cite{ben-tal2020lectures} where the main observation of is that the Prox-Mapping in A-BPGM can be calculated inexactly by applying the generalized linear oracle given in Definition \ref{Def:gen_lin_or}. The idea is very similar to the idea of the conditional gradient sliding described in Section \ref{S:S-CG} with the difference that here we implement an approximate Bregman Proximal step using only \textit{one} step of the generalized conditional gradient method. The resulting algorithm is listed below with the only difference with A-BPGM being the change of the Bregman Proximal step $u^{k+1}= \scrP_{\alpha_{k+1} r}(u^{k},\alpha_{k+1}\nabla f(y^{k+1}))$ to the step $u^{k+1}=\mathcal{L}_{\setX,\alpha_{k+1}r}(\alpha_{k+1}\nabla f(y^{k+1}))$ given by generalized linear oracle.  
\begin{algorithm}{\bf{Conditional Gradient Method by A-BPGM with Approximate Bregman Proximal Step}}
\\
	{\bf Input:} pick $x^{0}=u^{0}=y^{0}\in\dom(r)\cap\setX^{\circ}$, set $A_0=0$\\
	{\bf General step:} For $k=0,1,\ldots$ do:\\
	\qquad Find $\alpha_{k+1}$ from quadratic equation $A_k+\alpha_{k+1}=L_{f}\alpha_{k+1}^2$. Set $A_{k+1}=A_k+\alpha_{k+1}$.\\
	\qquad Set $y^{k+1} = \frac{\alpha_{k+1}}{A_{k+1}}u^{k}+\frac{A_{k}}{A_{k+1}}x^{k}$.\\
	\qquad Set (Approximate Bregman proximal step by generalized linear oracle) $u^{k+1}=\argmin_{x\in\setX}\left\{\alpha_{k+1}\left( f(y^{k+1}) + \inner{\nabla f(y^{k+1}),x-y^{k+1}} +r(x) \right) \right\} = \mathcal{L}_{\setX,\alpha_{k+1}r}(\alpha_{k+1}\nabla f(y^{k+1}))$.\\
	\qquad Set $x^{k+1} = \frac{\alpha_{k+1}}{A_{k+1}}u^{k+1}+\frac{A_{k}}{A_{k+1}}x^{k}$.\\
\end{algorithm}

Since the difference between such conditional gradient method and A-BPGM is in one simple change of the step for $u^{k+1}$, to obtain the convergence rate of the former, it is sufficient to track, what changes such approximate Bregman Proximal step entails in the convergence rate proof for A-BPGM.
In other words, we need to understand what happens with the proof for A-BPGM if the Bregman Proximal step is made inexactly by applying the generalized linear oracle.
The first important difference is that we need an inexact version of inequality \eqref{eq:r}, which was used in the convergence proof of A-BPGM and which the result of the exact Bregman Proximal step. 
To obtain its inexact version, let us denote 
\[
\varphi(x)=\alpha_{k+1}\left( f(y^{k+1}) + \inner{\nabla f(y^{k+1}),x-y^{k+1}} +r(x) \right).
\] 
Then generalized linear oracle actually minimizes this function on the set $\setX$ to obtain $u^{k+1}$. Thus, by the optimality condition, we have that there exists $\xi \in \partial \varphi(u^{k+1})$ such that $\inner{\xi,u^{k+1}-x}\leq 0$ for all $x \in \setX$. 
Now we remind that the Bregman Proximal step in A-BPGM minimizes $\varphi(x)+D_h(x,u^{k})$.
These observations allow to estimate the inexactness of the Bregman Proximal step implemented via generalized linear oracle.
Indeed, for $u^{k+1}=\mathcal{L}_{\setX,\alpha_{k+1}r}(\alpha_{k+1}\nabla f(y^{k+1}))$
\begin{align}
\label{eq:AGD_CG_Proof_1}
\inner{\xi+\nabla h(u^{k+1})-\nabla h(u^{k}),u^{k+1}-x} \leq \inner{\nabla h(u^{k+1})-\nabla h(u^{k}),u^{k+1}-x} \notag\\
= -D_{h}(x,u^{k}) + D_{h}(x,u^{k+1})+ D_{h}(u^{k+1},u^{k} ) \leq 2D_{\setX},
\end{align}
where we used three-point identity in Lemma \ref{Lm:3-point}. This inequality provides inexact version of the optimality condition  \eqref{eq:sub_prob_opt_cond} in the problem $\min_{x \in \setX} \{\varphi(x)+D_h(x,u^{k}) \}$, i.e. \eqref{eq:sub_prob_opt_cond_inexact} with $\Delta=2D_{\setX}$. This in order leads to \eqref{eq:r_inexact} with $\Delta=2D_{\setX}$, which is the desired inexact version of \eqref{eq:r}.

Let us now see, how this affects the convergence rate proof of A-BPGM. Inequality \eqref{eq:r} was used in the analysis only in \eqref{eq:AGD_Proof_5}. This means that the change of \eqref{eq:r} to \eqref{eq:r_inexact} with $\Delta=2D_{\setX}$ leads to an additive term $\frac{2D_{\setX}}{A_{k+1}}$ in the r.h.s. of \eqref{eq:AGD_Proof_5}:
\begin{align}
\label{eq:AGD_CG_Proof_2}
\Psi(x^{k+1}) &\leq \frac{A_{k}}{A_{k+1}} \Psi(x^{k}) + \frac{\alpha_{k+1}}{A_{k+1}} \Psi(u) + \frac{1}{A_{k+1}}D_h( u,u^{k})-\frac{1}{A_{k+1}}D_h( u,u^{k+1}) +\frac{2D_{\setX}}{A_{k+1}}, \quad u \in \setX.
\end{align}
Multiplying both sides of the last inequality by $A_{k+1}$, summing these inequalities from $k=0$ to $k=N-1$, and using that $A_{N}-A_0=\sum_{k=0}^{N-1}\alpha_{k+1}$, we obtain
\begin{align}
\label{eq:AGD_CG_Proof_3}
A_N \Psi(x^{N}) \leq A_0\Psi(x^{0}) + (A_{N}-A_0)\Psi(u) + D_h( u,u^{0}) - D_h( u,u^{N}) + 2ND_{\setX}.
\end{align}
Since $A_0=0$, we can choose $u=x^{\ast} \in\argmin\{D_h( u,u^{0})\vert u\in\setX^{\ast}\}$, so that, for all $N\geq 1$,
\[
\Psi(x^{N}) - \Psi_{\min}(\setX) \leq  \frac{D_h( x^{\ast},u^{0})}{A_{N}} + \frac{2ND_{\setX}}{A_{N}} \leq \frac{D_{\setX}}{A_{N}} + \frac{2ND_{\setX}}{A_{N}},
\]
which, given the lower bound $A_N \geq \frac{(N+1)^2}{4L_f}$ leads to the final result for the convergence rate of this inexact A-BPGM implemented via generalized linear oracle:
\[
\Psi(x^{N}) - \Psi_{\min}(\setX) \leq \frac{4L_{f}D_{\setX}}{(N+1)^2} + \frac{8L_{f}D_{\setX}}{N+1}.
\]
Thus, we obtain a variant of conditional gradient method with the same convergence rate $1/N$ as for the standard conditional gradient method. Using the same approach, but with U-A-BPGM as the basis method, one can obtain a universal version of conditional gradient method \cite{stonyakin2020inexact} for minimizing objectives with H\"older-continuous gradient. The bounds in this case a similar to the ones obtained in a more direct universal method in \cite{nesterov2018complexity}. Similar bounds were also recently obtained in \cite{zhao2020analysis}.

\section{Conclusion}
\label{sec:conclusion}

We close this survey, with a very important fact which Nesterov writes in the introduction of his important textbook \cite{Nes18}: \emph{in general, optimization problems are unsolvable.} Convex programming stands out from this general fact, since it describes a significantly large class of model problems, with important practical applications, for which general solution techniques have been developed within the mathematical framework of interior-point techniques. However, modern optimization problems are large-scale in nature, which renders these polynomial time methods impractical. First-order methods have become the gold standard in balancing cheap iterations with low solution accuracy, and many theoretical and practical advances having been made in the last 20 years. 

Despite the fact that convex optimization is approaching the state of being a primitive similar to linear algebra techniques, we foresee that the development of first-order methods has not come to a halt yet. In connection with stochastic inputs, the combination of acceleration techniques with other performance boosting tricks, like variance reduction, incremental techniques, as well as distributed optimization, still promises to produce some new innovations. On the other hand, there is also still much room for improvement of algorithms for optimization problems which do not admit a prox-friendly geometry. Distributed optimization, in particular in the context of federated learning is now a very active area of research, see  \cite{SurveyFederated} for a recent review of federated learning and \cite{gorbunov2020recent} for a recent review of distributed optimization. Another important focus in the research in optimization methods is now on numerical methods for non-convex optimization motivated by training of deep neural networks, see \cite{sun2019optimization,danilova2020recent} for a recent review. A number of open questions remain in the theory of first-order methods for variational inequalities and saddle-point problems, mainly in the case of variational inequalities with non-monotone operators. In particular, recently the authors of \cite{cohen2020relative} observed a connection between extragradient methods for monotone variational inequalities and accelerated first-order methods.  Thus, as we emphasize in this survey, new connections, that are still continuously being discovered between different methods and different formulations, can lead to new understanding and developments in this lively field of first-order methods.

\section*{Acknowledgements} 
The authors are grateful to Yu. Nesterov and A. Gasnikov for fruitful discussions.
M. Staudigl thanks the COST Action CA16228 (European Network for Game Theory), the FMJH Program PGMO and from the support of EDF (Project "Privacy preserving algorithms for distributed control of energy markets") for its support. 

\section*{References}


\end{document}